\newtheorem{theorem}{Theorem} [section]
\newtheorem{lemma}[theorem]{Lemma}
\newtheorem{proposition}[theorem]{Proposition}
\newtheorem{remark}[theorem]{Remark}
\newtheorem{definition}[theorem]{Definition}
\newtheorem*{acknowledgment}{Acknowledgments}
\DeclareMathOperator*{\supp}{supp}
\newcommand{\noi}{\noindent}
\newcommand{\Z}{\mathbb{Z}}
\newcommand{\R}{\mathbb{R}}
\newcommand{\C}{\mathbb{C}}
\newcommand{\T}{\mathbb{T}}
\let\Im=\undefined\DeclareMathOperator*{\Im}{Im}
\let\P= \undefined
\newcommand{\P}{\mathbf{P}}
\newcommand{\N}{\mathcal{N}}
\newcommand{\F}{\mathcal{F}}
\newcommand{\al}{\alpha}
\newcommand{\be}{\beta}
\newcommand{\dl}{\delta}
\newcommand{\nb}{\nabla}
\newcommand{\Dl}{\Delta}
\newcommand{\eps}{\varepsilon}
\newcommand{\g}{\gamma}
\newcommand{\G}{\Gamma}
\newcommand{\ld}{\lambda}
\newcommand{\ft}{\widehat}
\newcommand{\wt}{\widetilde}
\newcommand{\cj}{\overline}
\newcommand{\dt}{\partial_t}
\newcommand{\ta}{\theta}
\renewcommand{\l}{\ell}
\renewcommand{\o}{\omega}
\renewcommand{\O}{\Omega}
\newcommand{\les}{\lesssim}
\newcommand{\ges}{\gtrsim}
\newcommand{\jb}[1]
{\langle #1 \rangle}
\numberwithin{equation}{section}
\numberwithin{theorem}{section}
\begin{document}
\baselineskip = 15pt

\title[probabilistic Cauchy theory for the cubic NLS on $\R^d$]
{On the probabilistic Cauchy theory of the cubic
nonlinear Schr\"odinger equation on $\R^d$, $d \geq 3$}

\author[\'A.~B\'enyi, T.~Oh, and O.~Pocovnicu]
{\'Arp\'ad  B\'enyi, Tadahiro Oh, and Oana Pocovnicu}

\address{
\'Arp\'ad  B\'enyi\\
Department of Mathematics\\
 Western Washington University\\
 516 High Street, Bellingham\\
  WA 98225\\ USA}
\email{arpad.benyi@wwu.edu}

\address{
Tadahiro Oh\\
School of Mathematics\\
The University of Edinburgh\\
and The Maxwell Institute for the Mathematical Sciences\\
James Clerk Maxwell Building\\
The King's Buildings\\
 Peter Guthrie Tait Road\\
Edinburgh\\ 
EH9 3FD\\United Kingdom} 

\email{hiro.oh@ed.ac.uk}

\address{
Oana Pocovnicu\\
School of Mathematics\\
Institute for Advanced Study\\
Einstein Drive, Princeton\\ NJ 08540\\ USA
and
Department of Mathematics\\
Princeton University\\
Fine Hall\\ Washington Road\\
Princeton\\ NJ 08544 USA 
}

\email{opocovnicu@math.princeton.edu}

\subjclass[2010]{35Q55}

\keywords{nonlinear Schr\"odinger equation; almost sure well-posedness; modulation space; Wiener decomposition}


\begin{abstract}

We consider the Cauchy problem
of the cubic nonlinear Schr\"odinger equation (NLS) $:
i \partial_t u + \Delta u = \pm |u|^{2}u$ on $\R^d$, $d \geq 3$,
with random initial data and prove almost sure well-posedness results below the scaling critical regularity
$s_\textup{crit} = \frac{d-2}{2}$.
More precisely, given a function on $\R^d$, we introduce a randomization adapted to the Wiener decomposition, and, intrinsically, to the so-called modulation spaces.
Our goal in this paper is three-fold.
(i) We prove  almost sure  local well-posedness of the cubic NLS below the scaling critical regularity
along with small data global existence and scattering.
(ii)
We implement
a probabilistic perturbation argument
and prove
 `conditional' almost sure global well-posedness for $d = 4$ in the defocusing case, assuming an a priori energy bound on the critical Sobolev norm of the nonlinear part of a solution; when $d \ne 4$, we  show that
conditional almost sure global well-posedness in the defocusing case also holds under an additional assumption of global well-posedness of solutions to the defocusing cubic NLS with deterministic initial data in the critical
Sobolev regularity.
(iii) Lastly, we prove global well-posedness and scattering with a large probability for initial data randomized on dilated cubes.
\end{abstract}

%
\maketitle
\tableofcontents

\baselineskip = 15pt

\section{Introduction}
\subsection{Background}
In this paper, we consider
the Cauchy problem of the cubic nonlinear Schr\"odinger equation (NLS)
on $\R^d$, $d\geq 3$:
\begin{equation}
\begin{cases}\label{NLS1}
i \partial_t u + \Delta u = \pm \N(u)\\
u\big|_{t = 0} = u_0 \in H^s(\R^d),
\end{cases}
\qquad ( t, x) \in \R \times \R^d,
\end{equation}

\noi
where $\N(u) := |u|^2 u$.
The cubic NLS \eqref{NLS1} has been studied
extensively from both the theoretical and applied points of view.
Our main focus is to study well-posedness of \eqref{NLS1}
with {\it random} and {\it   rough} initial data.

It is well known that the cubic NLS
\eqref{NLS1}
 enjoys the dilation symmetry.
 More precisely,  if $u(t, x)$ is a solution to \eqref{NLS1}
on $\R^d$ with an initial condition $u_0$, then
\begin{align}
u_\mu(t, x) := \mu^{-1} u (\mu^{-2}t, \mu^{-1}x)
\label{scaling}
\end{align}

\noi
is also a solution to \eqref{NLS1} with the $\mu$-scaled initial condition
$u_{0, \mu}(x) := \mu^{-1} u_0 (\mu^{-1}x)$.
Associated to this dilation symmetry,
there is the so-called  scaling-critical Sobolev index $s_\textup{crit} := \frac{d-2}{2}$
such that the homogeneous $\dot{H}^{s_\textup{crit}}$-norm is invariant
under this dilation symmetry.
In general, we have
\begin{align}
\|u_{0, \mu}\|_{\dot H^s(\R^d)}
= \mu^{\frac{d-2}{2} - s}\|u_0\|_{\dot H^s(\R^d)} .
\label{scaling2}
\end{align}

\noi
If an initial condition $u_0$ is in $H^s(\R^d)$,
we say that the Cauchy problem \eqref{NLS1} is
subcritical, critical, or supercritical,
depending on $s > s_\textup{crit}$, $s = s_\textup{crit}$, or $s < s_\textup{crit}$,
respectively.

Let us first discuss
 the (sub-)critical regime.
In this case,  \eqref{NLS1} is known to be locally well-posed.
See Cazenave-Weissler \cite{CW}
for local well-posedness of \eqref{NLS1} in the critical
Sobolev spaces.
As is well known,
the conservation laws play an important role
in discussing long time behavior of solutions.
There are three known conservation laws for the cubic NLS \eqref{NLS1}:
\begin{align*}
\text{Mass: }&  M[u](t) := \int_{\R^d} |u(t, x)|^2 dx,\\
\text{Momentum: }&  P[u](t) := \Im \int_{\R^d}  u (t, x)\cj{\nb  u(t, x)} dx,\\
\text{Hamiltonian: } &  H[u](t) := \frac 12 \int_{\R^d} |\nb u(t, x)|^2 dx
\pm \frac 14 \int_{\R^d} |u(t, x)|^4 dx.
\end{align*}

\noi
The Hamiltonian is also referred to as the energy.
In view of the conservation of the energy,
the cubic NLS is called  energy-subcritical when $d \leq 3$
($s_\textup{crit} < 1$),
 energy-critical when $d = 4$ ($s_\textup{crit} = 1$),
and energy-supercritical when $d \geq 5$ ($s_\textup{crit} > 1$), respectively.

In the following, let us discuss
the known results on
 the global-in-time behavior of solutions to
the defocusing NLS,
corresponding to the $+$ sign in \eqref{NLS1},
in high dimensions $d\geq 3$.
When $d = 4$, the Hamiltonian is invariant under the scaling \eqref{scaling}
and plays a crucial role in the global well-posedness theory.
Indeed, Ryckman-Vi\c{s}an \cite{RV}
proved global well-posedness and scattering
for the defocusing cubic NLS on $\R^4$.
See also Vi\c{s}an \cite{Visan}.
When $d \ne 4$,
there is no known
 scaling invariant positive conservation law
for  \eqref{NLS1}
in high dimensions $d\geq 3$.
This makes it difficult to
study
 the global-in-time behavior of solutions,
in particular, in the scaling-critical regularity.
There are, however,
`conditional' global well-posedness and scattering results
as we describe below.
When $d = 3$ ($s_\textup{crit} = \frac 12$),
Kenig-Merle \cite{KM2}
applied the concentration compactness and rigidity method
developed in their previous paper \cite{KM} and proved that
if $u \in L^\infty_t \dot H_x^\frac{1}{2}(I\times \R^3)$,
where $I$ is a maximal interval of existence, then
$u$ exists globally in time and scatters.
For $ d\geq 5$, the cubic NLS is supercritical
with respect to any known conservation law.
Nonetheless,  motivated by a similar result of  Kenig-Merle \cite{KM3} on radial solutions
to the energy-supercritical nonlinear wave equation (NLW) on $\R^3$,
Killip-Vi\c{s}an \cite{KV} proved
that if $u \in L^\infty_t \dot H_x^{s_\textup{crit}}(I\times \R^d)$,
where $I$ is a maximal interval of existence, then
$u$ exists globally in time and scatters.
Note that the results in \cite{KM2} and \cite{KV}
are {\it conditional} in the sense that
they
assume an {\it a priori} control on the critical Sobolev norm.
The question of global well-posedness and scattering
without any a priori assumption remains a challenging open problem
for $d = 3$ and $d \geq 5$.

So far,  we have discussed
well-posedness in the (sub-)critical regularity.
In particular, the cubic NLS \eqref{NLS1} is locally well-posed
 in the (sub-)critical regularity, i.e.~$s \geq s_\textup{crit}$.
In the supercritical regime, i.e.~$s < s_\textup{crit}$,
on the contrary,
\eqref{NLS1} is known to be ill-posed.
See  \cite{CCT, BGT2, Carles, AC}.
In the following, however,
we consider
the Cauchy problem \eqref{NLS1}
with initial data in $H^s(\R^d)$, $s < s_\textup{crit}$
in a  probabilistic manner.
More precisely, given a function $\phi \in H^s(\R^d)$
with $s < s_\textup{crit}$,
we introduce a randomization $\phi^\o$
and prove almost sure well-posedness of \eqref{NLS1}.

In studying the Gibbs measure
for the defocusing (Wick ordered) cubic NLS on $\T^2$,
Bourgain \cite{BO7} considered random initial data of the form:
\begin{equation}
 u_0^\omega(x) = \sum_{n \in \Z^2} \frac{g_n(\o)}{\sqrt{1+|n|^2}}e^{i n \cdot x},
\label{I1}
 \end{equation}

\noi
where $\{g_n\}_{n \in \Z^2}$ is a sequence of independent
standard complex-valued  Gaussian random variables.
The function \eqref{I1} represents a typical
element in the support of the Gibbs measure,
more precisely,  in the support of the Gaussian free field on $\T^2$
associated to this Gibbs measure,
and is critical with respect to the scaling.
With a combination of deterministic PDE techniques
and probabilistic arguments,
Bourgain showed that
 the (Wick ordered) cubic NLS on $\T^2$ is well-posed
almost surely with respect to the random initial data \eqref{I1}.
In the context of
the cubic NLW
on a three dimensional compact Riemannian manifold $M$,
Burq-Tzvetkov \cite{BT2}
considered the Cauchy problem with a more general class of random initial data.
Given an eigenfunction expansion $u_0(x) = \sum_{n = 1}^\infty  c_n e_n(x) \in H^s(M)$
of an initial condition\footnote{For NLW, one needs to specify
$(u, \dt u)|_{t = 0}$ as an initial condition. For simplicity of presentation, we
only discuss $u|_{t = 0}$.},
where $\{e_n\}_{n = 1}^\infty$ is an orthonormal basis of $L^2(M)$
consisting of the eigenfunctions of the Laplace-Beltrami operator,
they introduced a randomization $u_0^\o$ by
\begin{equation}
u_0^\omega (x) = \sum_{n = 1}^\infty g_n (\omega) c_n e_n(x).
\label{I2}
\end{equation}

\noi
Here,  $\{g_n\}_{n = 1}^\infty$ is a sequence of independent mean-zero
random variables with a uniform bound on the fourth moments.
Then,
they proved almost sure local well-posedness
with random initial data of the form \eqref{I2}
for $s \geq \frac 14$.
Since  the scaling-critical Sobolev index
for this problem is
$ s_\textup{crit}=\frac 12 $, this result allows us to take initial data below the critical regularity
and still construct solutions upon randomization of the initial data.
 We point out that
the randomized function  $u_0^\o$ in \eqref{I2}
has the same Sobolev regularity as the original function $u_0$
and is not smoother, almost surely.
However, it enjoys a better integrability,
which allows one to prove improvements of Strichartz estimates.
(See Lemmata \ref{PROP:Str1} and \ref{PROP:Str2} below.)
Such an improvement on integrability  for random Fourier series
is known as  Paley-Zygmund's theorem \cite{PZ}.
See also Kahane \cite{Kahane}
and Ayache-Tzvetkov \cite{AT}.
 There are several works on Cauchy problems of evolution equations with random data
that followed these results,
including some on almost sure global well-posedness:
\cite{Bo97, Thomann, CO, Oh11, BTT,  Deng, DS1, BT3, NPS, DS2, R, BTT2,  BB1, BB2, NS, PRT, LM}.

\subsection{Randomization adapted to the Wiener decomposition and modulation spaces}

Many of the results mentioned above are on compact domains,
where there is  a countable basis of eigenfunctions of the Laplacian
and thus there is a natural way to introduce a randomization.
On $\R^d$,
there is no  countable basis of $L^2(\R^d)$ consisting of eigenfunctions of the Laplacian.
Randomizations have been introduced
with respect to some other countable bases of $L^2(\R^d)$,
for example,
a countable basis of eigenfunctions of
the Laplacian with a confining potential such as the harmonic
oscillator $\Dl - |x|^2$,
leading to a careful study of properties of eigenfunctions.
In this paper, our goal is to introduce a simple and natural randomization for functions on $\R^d$. For this purpose, we first review some basic notions related to the so-called modulation spaces of time-frequency analysis \cite{Gr}.

The modulation spaces were introduced by Feichtinger \cite{Fei} in early eighties. The groundwork theory regarding these spaces of time-frequency analysis
was then established in joint collaboration with Gr\"ochenig \cite{FG1, FG2}.
The modulation spaces arise from a uniform partition of the frequency space, commonly known as the {\it Wiener decomposition} \cite{W}: $\R^d = \bigcup_{n \in \Z^d} Q_n$,
where $Q_n$ is the unit cube centered at $n \in \Z^d$.
The Wiener decomposition of $\R^d$ induces a natural uniform decomposition of the frequency space of a signal via the (non-smooth)
frequency-uniform decomposition
operators $\mathcal F^{-1}\chi_{Q_n}\mathcal F$. Here, $\mathcal Fu=\widehat u$ denotes the Fourier transform of
a distribution $u$.
The drawback of this approach is the roughness of the characteristic functions $\chi_{Q_n}$, but this issue can easily be fixed by smoothing them out appropriately.
We have the following definition of the (weighted) modulation spaces $M^{p, q}_s$.
Let $\psi \in \mathcal{S}(\R^d)$ such that
\begin{equation}
\supp \psi \subset [-1,1]^d
\qquad \text{and} \qquad\sum_{n \in \Z^d} \psi(\xi -n) \equiv 1
\ \text{ for any }\xi \in \R^d.
\label{mod1a}
\end{equation}
Let $0<p,q\leq \infty$ and $s\in\mathbb R$; $M^{p, q}_s$ consists of all tempered distributions $u\in\mathcal S'(\R^d)$ for which the (quasi) norm
\begin{equation}
\|u\|_{M_s^{p, q}(\R^d)} := \big\| \jb{n}^s \|\psi(D-n) u
\|_{L_x^p(\R^d)} \big\|_{\l^q_n(\mathbb{Z}^d)}
\label{mod2}
\end{equation}

\noi
is finite. Note that $\psi(D-n)u(x)=\int_{\R^d} \psi (\xi-n)\widehat u (\xi)e^{2\pi ix\cdot \xi}\, d\xi$ is just a Fourier multiplier operator with symbol $\chi_{Q_n}$ conveniently smoothed.

It is worthwhile to compare the definition \eqref{mod2}
with that of the Besov spaces. Let $\varphi_0, \varphi \in \mathcal{S}(\R^d)$ such that
$\supp \varphi_0 \subset \{ |\xi| \leq 2\}$, $\supp \varphi \subset
\{ \frac{1}{2}\leq |\xi| \leq 2\}$, and $ \varphi_0(\xi) + \sum_{j =
1}^\infty \varphi(2^{-j}\xi) \equiv 1.$ With $\varphi_j(\xi) =
\varphi(2^{-j}\xi)$,
we define  the (inhomogeneous) Besov spaces $B_s^{p, q}$
via the norm
\begin{equation} \label{besov1}
\|u\|_{B^s_{p, q}(\R^d) } = \big\| 2^{js} \|\varphi_j(D) u
\|_{L^p(\R^d)} \big\|_{\l^q_j(\mathbb{Z}_{\geq 0})}.
\end{equation}

\noi
There are several known embeddings between Besov, Sobolev, and
modulation spaces. See,  for example,  Okoudjou \cite{Ok},
Toft \cite{To}, Sugimoto-Tomita \cite{suto2},  and Kobayashi-Sugimoto \cite{kosu}.

Now, given a function $\phi$ on $\R^d$,
we have
\[ \phi = \sum_{n \in \Z^d} \psi(D-n) \phi,\]

\noi
where $\psi(D-n)$ is defined above. This decomposition leads to a randomization of $\phi$ that is  
very natural from the perspective of time-frequency analysis associated to modulation spaces. 
Let $\{g_n\}_{n \in \Z^d}$ be a sequence of independent mean zero complex-valued random variables
on a probability space $(\O, \F, P)$,
where the real and imaginary parts of $g_n$ are independent
and endowed
with probability distributions $\mu_n^{(1)}$ and $\mu_n^{(2)}$.
Then, we can define the {\it Wiener randomization of $\phi$} by
\begin{equation}
\phi^\omega : = \sum_{n \in \Z^d} g_n (\omega) \psi(D-n) \phi.
\label{R1}
\end{equation}

 Almost simultaneously with  our first paper \cite{BOP1}, L\"uhrmann-Mendelson \cite{LM} also 
considered a randomization of the form \eqref{R1} (with cubes $Q_n$ being substituted by appropriately localized balls) in the study of NLW
on $\R^3$. See Remark \ref{REM:LM} below. For a similar randomization used in the study of the Navier-Stokes equations, see the work of Zhang and Fang \cite{ZF}.
We would like to stress again, however, that our reason for considering
the randomization of the form \eqref{R1} comes from its connection to time-frequency analysis. See  also our previous papers \cite{BP} and \cite{BOP1}.

In the sequel, we make the following assumption on the distributions $\mu_n^{(j)}$: there exists $c>0$ such that
\begin{equation}
\bigg| \int_{\R} e^{\g x } d \mu_n^{(j)}(x) \bigg| \leq e^{c\g^2}
\label{R2}
\end{equation}
	
\noi
for all $\g \in \R$, $n \in \Z^d$, $j = 1, 2$.
Note that \eqref{R2} is satisfied by
standard complex-valued Gaussian random variables,
standard Bernoulli random variables,
and any random variables with compactly supported distributions.

It is easy to see that, if $\phi \in H^s(\R^d)$,
then  the randomized function $\phi^\o$ is
almost surely in $H^s(\R^d)$. 
While  there is no smoothing upon randomization in terms of differentiability in general, this randomization \emph{behaves better under integrability};
if $\phi \in L^2(\R^d)$,
then  the randomized function $\phi^\o$ is
almost surely  in $L^p(\R^d)$ for any finite $p \geq 2$.
As a result of this enhanced integrability,
we have improvements of the Strichartz estimates.
See Lemmata \ref{PROP:Str1} and  \ref{PROP:Str2}.
These improved Strichartz estimates play an essential role in proving
probabilistic well-posedness results,
which we describe below.

\subsection{Main results}

Recall that
the scaling critical Sobolev index for the cubic NLS on $\R^d$
is $s_\textup{crit} = \frac{d-2}{2}$.
In the following,
we take  $\phi \in H^s(\R^d) \setminus H^{s_\textup{crit}} (\R^d)$
for some range of $s < s_\textup{crit}$,
that is,
below the critical regularity.
Then, we consider the well-posedness problem
of \eqref{NLS1} with respect to
the  randomized initial data $\phi^\o$ defined in \eqref{R1}.

For $d \geq 3$, define $s_d$ by
\begin{align}
 s_d := \frac{d-1}{d+1}\cdot s_\textup{crit} = \frac{d-1}{d+1} \cdot \frac{d-2}{2}
\label{Sd1}
\end{align}

\noi
Note that $s_d <s_\textup{crit}$ and $\frac{s_d}{s_\textup{crit}} \to 1$ as $d \to \infty$.
Throughout the paper,
we use  $S(t)= e^{it\Dl}$ to denote the linear propagator of the Schr\"odinger group.

We are now ready to state our main results.

\begin{theorem}[Almost sure local well-posedness]\label{THM:1}
Let $d \geq 3$
and   $s > s_d$.
Given $\phi \in H^s(\R^d)$, let $\phi^\o$ be its Wiener randomization defined in \eqref{R1},
satisfying \eqref{R2}.
Then, the cubic NLS \eqref{NLS1} on $\R^d$
is almost surely locally well-posed
with respect to the randomization $\phi^\omega$ as initial data.
More precisely,
there exist $ C, c, \g>0$ such that
for each $0< T\ll 1$,
there exists a set $\O_T \subset \O$ with the following properties:

\smallskip
\begin{itemize}
\item[\textup{(i)}]
$P(\O_T^c) < C \exp\big(-\frac{c}{T^\g \|\phi\|_{H^s}^2 }\big)$,

\item[\textup{(ii)}]
For each $\o \in \O_T$, there exists a (unique) solution $u$
to \eqref{NLS1}
with $u|_{t = 0} = \phi^\o$
in the class
\[ S(t) \phi^\o + C([-T, T]: H^{\frac{d-2}{2}} (\R^d)) \subset C([-T, T]:H^s(\R^d)).\]

\end{itemize}
\end{theorem}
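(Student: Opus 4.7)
The plan is to implement a Da Prato--Debussche-type decomposition: write the sought solution as $u = z + v$, where $z(t) := S(t)\phi^\o$ is the random linear evolution carrying the rough data, and $v$ is a remainder that I will construct at the deterministic critical regularity $H^{s_\textup{crit}}(\R^d)$. Since $z$ is automatically a solution to the linear Schr\"odinger equation, Duhamel's formula reduces \eqref{NLS1} to the fixed-point problem
\[
v(t) = \mp i \int_0^t S(t-t')\, \N(z+v)(t')\, dt', \qquad v|_{t=0}=0,
\]
so that the only non-deterministic objects entering the analysis are the seven trilinear terms in $\N(z+v) - \N(v)$ that contain at least one copy of $z$.

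First I would fix a Strichartz-type resolution space $X_T$ at regularity $s_\textup{crit}$, of the form $X_T := C([-T,T]; H^{s_\textup{crit}}(\R^d)) \cap L^q([-T,T]; W^{s_\textup{crit},r}(\R^d))$ for a suitable admissible pair $(q,r)$ of Cazenave--Weissler type, and run a contraction argument on a small ball in $X_T$. The purely deterministic trilinear piece $\N(v)$ is controlled by the critical Strichartz and fractional Leibniz estimates underlying the Cazenave--Weissler theory \cite{CW}, giving a contraction whenever $\|v\|_{X_T}$ is sufficiently small.

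The mixed terms (with one or two factors of $z$) and the purely random term $\N(z)$ are estimated by combining Sobolev and Leibniz bounds at level $s_\textup{crit}$ for the factors of $v$ with the improved Strichartz estimates for $z$ furnished by Lemmata \ref{PROP:Str1} and \ref{PROP:Str2}. The crucial feature of the Wiener randomization is that $z = S(t)\phi^\o$ almost surely lives in $L^{\wt q}_T L^{\wt r}_x$ (and in derivative variants thereof) for exponents $(\wt q, \wt r)$ strictly outside the deterministic Strichartz region, at a loss of only $\|\phi\|_{H^s}$ rather than $\|\phi\|_{H^{s_\textup{crit}}}$. The threshold $s > s_d = \frac{d-1}{d+1}\cdot\frac{d-2}{2}$ in \eqref{Sd1} is dictated precisely by the budget of $s_\textup{crit} - s_d = \frac{d-2}{d+1}$ derivatives that must be transferred from the $v$-factors onto the $z$-factors via Sobolev embedding in order to close the trilinear estimate. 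This balancing act, together with extracting a small positive power of $T$ in each trilinear bound, is where I expect the main technical work to lie.

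Finally, the probabilistic set $\O_T$ is defined by demanding that the finitely many random Strichartz norms of $z$ appearing in the nonlinear estimate be bounded by some threshold $\ld = \ld(T)$ on $[-T,T]$. The sub-Gaussian hypothesis \eqref{R2} on the coefficients $g_n$, combined with the almost-orthogonality of the Wiener blocks $\psi(D-n)\phi$ and Minkowski's inequality applied slot-by-slot in $\o$, converts each such bound into a large-deviation estimate of the form $P\bigl(\|z\|_{L^{\wt q}_T L^{\wt r}_x} > \ld\bigr) \les \exp\bigl(-c\ld^2/(T^{\kk}\|\phi\|_{H^s}^2)\bigr)$ for some $\kk > 0$. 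Choosing $\ld$ as a small positive power of $T$ --- small enough for the ball of radius $\ld$ in $X_T$ to be contraction-invariant, and large enough to keep the exponent in the probability bound under control --- yields assertion (i) in the stated form $P(\O_T^c) < C\exp(-c/(T^\g\|\phi\|_{H^s}^2))$, while the contraction simultaneously produces, for each $\o \in \O_T$, the unique $v \in C([-T,T]; H^{s_\textup{crit}}(\R^d))$ required for (ii).
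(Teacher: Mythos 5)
Your overall strategy --- the decomposition $u=z+v$ with $z=S(t)\phi^\o$, a contraction for the remainder $v$ at the critical regularity, probabilistic Strichartz bounds for $z$ with a threshold $T^\theta$ defining $\O_T$, and the resulting large-deviation bound giving (i) --- is exactly the skeleton of the paper's argument (Proposition \ref{PROP:NL1} together with Section \ref{SEC:THM12}). The choice of resolution space is not the essential difference: the paper uses the $U^2/V^2$-based space $X^{\frac{d-2}{2}}$, but the authors note that classical subcritical $X^{s,b}$-spaces would also suffice.

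The genuine gap is in your treatment of the trilinear terms containing $z$. You propose to close them using only H\"older, Sobolev/fractional Leibniz at level $s_\textup{crit}$, and the improved \emph{linear} probabilistic Strichartz estimates, attributing the threshold $s_d$ to ``transferring $s_\textup{crit}-s$ derivatives from the $v$-factors onto the $z$-factors via Sobolev embedding.'' This fails in the decisive frequency configuration: after dyadic decomposition, consider e.g.\ $z_1 z_2 \jb{\nb}^{\frac{d-2}{2}} z_3\, v_4$ or $v_1 v_2 \jb{\nb}^{\frac{d-2}{2}} z_3\, v_4$ with $N_3\sim N_4 \gg N_1, N_2$ and $N_1,N_2 = O(1)$, so that the random factor carries the top frequency. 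The randomization gains integrability but no derivatives, so placing $\jb{\nb}^{\frac{d-2}{2}} z_3$ in any $L^p_{t,x}$ costs a factor $N_3^{\frac{d-2}{2}-s}$, and neither the low-frequency factors nor the dual function $v_4$ (normalized only in $Y^0$) can supply a compensating negative power of $N_3$ through H\"older and Sobolev alone; on these terms your scheme closes only for $s\geq s_\textup{crit}$. What rescues these cases in the paper is the refined \emph{bilinear} Strichartz estimate \eqref{Ys4} (Bourgain, Ozawa--Tsutsumi, transferred to $V^2_\Dl$ via the interpolation Lemma \ref{LEM:Xinterpolate}), which extracts the gain $(N_{\mathrm{low}}/N_{\mathrm{high}})^{\frac12-}$ from pairing a low-frequency factor against the high-frequency one, combined with the unit-cube frequency localization of the Wiener randomization through the interpolation \eqref{T1a} between $L^4_{t,x}$ bounds and $\|\P_{N_j}\phi^\o\|_{H^s}$. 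It is precisely this case-by-case analysis (Subcases (2.b), (3.b), (4.b) in the proof of Proposition \ref{PROP:NL1}) that produces the exponent $N_3^{\frac{d-2}{2}-\frac{d+1}{d-1}s+}$ and hence the threshold $s>s_d=\frac{d-1}{d+1}\cdot\frac{d-2}{2}$. Without the bilinear refinement (or some substitute exploiting the frequency separation), your trilinear estimate does not reach any $s<s_\textup{crit}$, let alone $s_d$.
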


\noi
We prove Theorem \ref{THM:1}
by considering the equation satisfied by the nonlinear part of a solution $u$.
Namely,
let
$z (t) = z^\o(t) : = S(t) \phi^\o$
and $v(t) := u(t) - S(t) \phi^\o$
be  the linear and nonlinear parts of $u$, respectively.
Then,
\eqref{NLS1} is equivalent to the following perturbed NLS:
\begin{equation}
\begin{cases}
	 i \dt v + \Dl v = \pm |v + z|^2(v+z)\\
v|_{t = 0} = 0.
 \end{cases}
\label{NLS2}
\end{equation}

\noi
We reduce our analysis to
the Cauchy problem \eqref{NLS2}	
for $v$,
viewing $z$ as a random forcing term.
Note that such a point of view is common in the study
of stochastic PDEs.
As a result, the uniqueness in Theorem \ref{THM:1}
refers to uniqueness of the nonlinear part
$v(t) = u(t) - S(t) \phi^\o$ of a solution $u$.

The proof of Theorem \ref{THM:1} is based on the fixed point argument
involving
 the variants of the $X^{s, b}$-spaces
adapted to the $U^p$- and $V^p$-spaces
introduced by Koch, Tataru, and their collaborators \cite{KochT, HHK, HTT1}.
See Section \ref{SEC:Up} for the basic definitions and properties of these function spaces.
The main ingredient is the local-in-time improvement of the Strichartz estimates
(Lemma \ref{PROP:Str1})
and the refinement of the bilinear Strichartz estimate (Lemma \ref{LEM:Ys1} (ii)).
We point out that,
although $\phi$ and its randomization $\phi^\o$
have  a
supercritical Sobolev regularity,
 the randomization essentially makes the  problem  subcritical, at least locally in time,
 and therefore,
one can also prove
Theorem \ref{THM:1}
only  with the
classical subcritical $X^{s, b}$-spaces, $b > \frac 12$.
See \cite{BOP1} for the result when
$d = 4$.

Next, we turn our attention to the global-in-time behavior
of the solutions constructed in Theorem \ref{THM:1}.
The key nonlinear estimate
in the proof of
Theorem \ref{THM:1}
combined with
the global-in-time improvement of the Strichartz estimates
(Lemma \ref{PROP:Str2}) yields
the following result
on small data global well-posedness and scattering.

\begin{theorem}[Probabilistic
small data global well-posedness and scattering]\label{THM:2}
Let $d\geq 3$ and
 $s \in ( s_d, s_\textup{crit}]$,
 where $s_d$ is as in \eqref{Sd1}.
Given $\phi \in H^s(\R^d)$,
let $\phi^\o$ be its Wiener randomization defined in \eqref{R1},
satisfying \eqref{R2}.
Then,
there exist $ C, c >0$ such that
for each $0<\eps \ll1$,
there exists a set $\O_\eps \subset \O$ with the following properties:

\smallskip
\begin{itemize}
\item[\textup{(i)}]
$P(\O_\eps^c) \leq C \exp\big(-\frac{c}{\eps^2 \|\phi\|_{H^s}^2 }\big)
\to 0$
as $\eps \to 0$,

\item[\textup{(ii)}]
For each $\o \in \O_\eps$, there exists a (unique)
global-in-time solution $u$ to \eqref{NLS1}
with
\[u|_{t = 0} = \eps \phi^\o\]
in the class
\[ \eps S(t) \phi^\o + C(\R : H^{\frac{d-2}{2}} (\R^d)) \subset C(\R:H^s(\R^d)),\]

\item[\textup{(iii)}]
We have scattering for each $\o\in \Omega_\eps$.
More precisely, for each $\o \in \O_\eps$,
there exists $v_+^\o \in H^\frac{d-2}{2}(\R^d)$ such that
\[ \| u(t) - S(t)(\eps \phi^\o + v_+^\o) \|_{H^\frac{d-2}{2}(\R^d)} \to 0\]

\noi
as $t \to \infty$.
A similar statement holds for $t \to -\infty$.
\end{itemize}
\end{theorem}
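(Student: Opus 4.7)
The plan is to run a fixed point argument analogous to the one behind Theorem \ref{THM:1}, but performed on the whole line $\R$ in the critical $X^{s,b}$-type space based on $U^2/V^2$ at the scaling regularity $s_\textup{crit} = \frac{d-2}{2}$ (as set up in Section \ref{SEC:Up}). Writing $u = z + v$ with
\[
z(t) := \eps \, S(t) \phi^\o, \qquad v(t) := u(t) - z(t),
\]
the unknown $v$ satisfies \eqref{NLS2} with $z$ replaced by $\eps S(t) \phi^\o$ and zero initial data, and we seek $v$ as a fixed point of
\[
\G v(t) := \mp i \int_0^t S(t-t') \,|v+z|^2(v+z)(t')\, dt'
\]
in a ball of radius $O(\eps^3)$ in the global critical function space. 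Expanding the cubic nonlinearity into monomials $v^a z^b$ with $a+b=3$ (with appropriate conjugations), the purely deterministic piece $v^3$ is handled by the standard critical trilinear estimate in the $U^2/V^2$ framework, while the mixed terms containing at least one factor of $z$ are controlled using the \emph{global-in-time} probabilistic improvement of Strichartz estimates (Lemma \ref{PROP:Str2}) together with the refined bilinear estimate (Lemma \ref{LEM:Ys1}(ii)) used in Theorem \ref{THM:1}.

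The probabilistic step is a large-deviation bound: using the exponential moment hypothesis \eqref{R2}, one shows that there exist $C,c>0$ and a finite collection of Strichartz-type space-time norms $\|\cdot\|_{Y_i}$ such that, outside an exceptional set $\O_\eps^c$ of measure
\[
P(\O_\eps^c) \le C \exp\Bigl(-\tfrac{c}{\eps^2 \|\phi\|_{H^s}^2}\Bigr),
\]
one has $\|z\|_{Y_i} \le C \eps$ on all of $\R$ for every $i$. Combined with the nonlinear estimates, this yields
\[
\|\G v\|_{X^{s_\textup{crit}}(\R)} \lesssim \|v\|_{X^{s_\textup{crit}}(\R)}^3 + \eps \|v\|_{X^{s_\textup{crit}}(\R)}^2 + \eps^2 \|v\|_{X^{s_\textup{crit}}(\R)} + \eps^3,
\]
with an analogous difference estimate, so that for $\eps$ sufficiently small $\G$ is a contraction on a ball of radius $C\eps^3$ for every $\o \in \O_\eps$. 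This produces the unique nonlinear component $v \in C(\R; H^{s_\textup{crit}}(\R^d))$, giving parts (i) and (ii).

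For scattering (iii), having $v$ in the global critical function space forces the Duhamel integrand to lie in a space for which the same multilinear estimates give
\[
\Bigl\| \int_{t_1}^{t_2} S(-t')\,|v+z|^2(v+z)(t')\, dt' \Bigr\|_{H^{s_\textup{crit}}(\R^d)} \longrightarrow 0
\]
as $t_1,t_2 \to +\infty$. Hence $\{S(-t) v(t)\}$ is Cauchy in $H^{s_\textup{crit}}(\R^d)$ and converges to some $v_+^\o \in H^{s_\textup{crit}}(\R^d)$, which, rewritten in terms of $u$, is exactly the claimed scattering statement; the case $t \to -\infty$ is identical. The main obstacle I anticipate is the \emph{global-in-time} control of the resonant mixed terms of the form $z\cdot \bar z \cdot v$ and $z^2 \bar v$, where one factor of $v$ (at critical regularity) must be paired against two rough random factors whose regularity is strictly below $s_\textup{crit}$; the gain comes entirely from the probabilistic integrability improvement in Lemma \ref{PROP:Str2}, and the trilinear estimates must be set up so that all space-time norms falling on $z$ are among those covered by that lemma on $\R$, not merely on a bounded interval as in Theorem \ref{THM:1}.
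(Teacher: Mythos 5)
Your overall strategy is the paper's: write $u=\eps S(t)\phi^\o+v$, run a contraction for $v$ on all of $\R$ in the critical $U^2/V^2$-based space $X^{\frac{d-2}{2}}(\R)$, using the global-in-time probabilistic Strichartz estimates (Lemma \ref{PROP:Str2}) for the monomials containing $z$ and the deterministic critical trilinear estimate for $v^3$, then deduce scattering from the decay of the Duhamel tails. This is precisely Proposition \ref{PROP:NL1}(ii) combined with the argument of Section \ref{SEC:THM12}.

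However, your probabilistic step contains a genuine quantitative error which, as written, destroys conclusion (i). You demand $\|z\|_{Y_i}\le C\eps$ with $z=\eps S(t)\phi^\o$, which is the event $\|S(t)\phi^\o\|_{Y_i}\le C$ for an absolute constant $C$. Lemma \ref{PROP:Str2} only bounds the failure probability of this event by $C'\exp(-cC^2\|\phi\|_{L^2}^{-2})$, a constant independent of $\eps$; for genuinely random data this probability is bounded below away from zero uniformly in $\eps$, so it is not $\le C\exp(-c\eps^{-2}\|\phi\|_{H^s}^{-2})$ and does not tend to $0$ as $\eps\to0$. The root cause is your choice of ball radius $O(\eps^3)$, which forces the over-restrictive requirement $\|z\|_{Y_i}=O(\eps)$. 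The correct bookkeeping --- and what the paper does --- is to fix a small \emph{absolute} constant $\eta_2$ determined by the implicit constants in the nonlinear estimates, exclude the set where $\|z\|_{Y_i}=\eps\|S(t)\phi^\o\|_{Y_i}>\eta_2$, whose probability is $\le C\exp\big(-c\,\eta_2^2\,\eps^{-2}\|\phi\|_{H^s}^{-2}\big)\to0$ as $\eps\to0$, and run the contraction on the ball of \emph{fixed} radius $\eta_2$ in $X^{\frac{d-2}{2}}(\R)$, where $\|\G v\|\le C(\|v\|^3+\eta_2^3)\le\eta_2$ and the difference estimate closes. With that correction the rest of your argument, including the scattering step via the vanishing of the tail norms $\|v\|_{L^{d+2}_{t,x}([t_1,\infty))}$ and $\|\jb{\nb}^s z\|_{L^q_{t,x}([t_1,\infty))}$ by monotone convergence, goes through as in the paper.
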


\noi
In general, a local well-posedness result in a critical space
is often accompanied by small data global well-posedness and scattering.
In this sense,
Theorem \ref{THM:2}
is an expected consequence of Theorem \ref{THM:1},
since, in our construction,
 the nonlinear part $v$ lies in the critical space $H^\frac{d-2}{2}(\R^d)$.
The next natural question is
probabilistic global well-posedness
for large data.
In order to state our result,
we need to make several hypotheses.
The first hypothesis is on a
probabilistic  a priori energy bound on the nonlinear part $v$.

\medskip
\noi
{\bf Hypothesis (A):}
Given any $T, \eps > 0$, there exists $R = R(T, \eps)$ and $\O_{T, \eps} \subset \O$ such that
\begin{itemize}
\item[(i)]
$P(\O_{T, \eps}^c) < \eps$, and
\item[(ii)]
If $v = v^\o$ is the solution to \eqref{NLS2}
for $\o \in \O_{T, \eps}$, then
the following {\it a priori} energy estimate holds:
\begin{equation}
 \|v(t) \|_{L^\infty([0, T]; H^\frac{d-2}{2}(\R^d))} \leq R(T, \eps).
\label{HypA}
 \end{equation}

\end{itemize}

\medskip

\noi
Note that Hypothesis (A)
does {\it not} refer to  existence
of a solution $v = v^\o$ on $[0, T]$ for given $\o \in \O_{T, \eps}$.
It only hypothesizes the {\it a priori} energy bound \eqref{HypA},
just like the usual conservation laws. It may be possible to prove \eqref{HypA} independently from the argument presented in this paper.
Such a probabilistic  a priori energy
estimate is known, for example, for the cubic NLW. See Burq-Tzvetkov \cite{BT3}.
We point out that the upper bound $R(T, \eps)$ in \cite{BT3} tends to $\infty$
as $T\to \infty$. See also \cite{POC}.

The next hypothesis is on global existence and space-time bounds
of solutions to the cubic NLS  \eqref{NLS1}
with deterministic initial data belonging to the critical space $H^\frac{d-2}{2}(\R^d)$.

\medskip

\noi
{\bf Hypothesis (B):}
Given  any $w_0 \in H^\frac{d-2}{2}(\R^d)$,
there exists a global solution $w$ to the defocusing cubic NLS \eqref{NLS1}
with $w|_{t = 0} = w_0$.
Moreover, there exists a function $C:[0, \infty)\times [0, \infty)\to [0, \infty)$
which is non-decreasing in each argument
such that
\begin{equation}
\| w\|_{L^{d+2}_{t, x}([0, T]\times \R^d)} \leq C\big(\|w_0\|_{H^\frac{d-2}{2}(\R^d)}, T\big)
\label{HypB}
\end{equation}

\noi
for any $T > 0$.
\medskip

\noi
Note that when $d = 4$, Hypothesis (B) is known to be true for any $T> 0$
thanks to the global well-posedness result by Ryckman-Vi\c{s}an \cite{RV} and Vi\c{s}an \cite{Visan}.
For other dimensions $d \geq 3$ with $d \ne 4$,
it is not known whether Hypothesis (B) holds.
Let us compare \eqref{HypB}
and the results in \cite{KM2} and \cite{KV}.
Assuming that  $w \in L^\infty_t \dot H_x^{s_\textup{crit}}(I_*\times \R^d)$,
where $I_*$ is a maximal interval of existence,
it was shown in \cite{KM2} and \cite{KV} that $I_* = \R$ and
\begin{equation}
\| w\|_{L^{d+2}_{t, x}(\R \times \R^d)} \leq C\Big(\|w\|_{L^\infty_t \dot H_x^\frac{d-2}{2}(\R\times \R^d)}\Big).
\label{HypC}
\end{equation}

\noi
We point out that  Hypothesis (B) is not directly comparable
to the results in \cite{KM2, KV} in the following sense.
On the one hand, by assuming that  $w \in L^\infty_t \dot H_x^{s_\textup{crit}}(I_*\times \R^d)$,
the results in \cite{KM2, KV} yield the global-in-time bound \eqref{HypC},
while
Hypothesis (B) assumes the bound \eqref{HypB}  only for each {\it finite} time $T>0$
and does not assume a global-in-time bound.
On the other hand,
\eqref{HypB} is much stronger than \eqref{HypC}
in the sense that the right-hand side of \eqref{HypB} depends only on the size of an initial condition $w_0$,
while the right-hand side of \eqref{HypC} depends
on the global-in-time $L^\infty_t \dot H^\frac{d-2}{2}_x$-bound of the solution $w$.
Hypothesis (B), just like Hypothesis (A),
 is of independent interest
from  Theorem \ref{THM:3} below
and is closely related to the fundamental open problem
of global well-posedness and scattering for the defocusing cubic NLS \eqref{NLS1}
for $d = 3$ and $d \geq 5$.

We now state our third theorem on almost sure global well-posedness
of the cubic NLS
under Hypotheses (A) and (B).
We restrict ourselves to the defocusing NLS in the next theorem.

\begin{theorem}[Conditional almost sure global well-posedness] \label{THM:3}
Let $d \geq 3$
and
 $s \in ( s_d, s_\textup{crit}]$,
 where $s_d$ is as in \eqref{Sd1}.
Assume Hypothesis \textup{(A)}.
Furthermore, assume Hypothesis \textup{(B)} if $d \ne 4$.
Given $\phi \in H^s(\R^d)$, let $\phi^\o$ be its Wiener randomization defined in \eqref{R1},
satisfying \eqref{R2}.
Then, the defocusing cubic NLS \eqref{NLS1} on $\R^d$
is almost surely globally  well-posed
with respect to the randomization $\phi^\omega$ as initial data.
More precisely,
there exists a set $\Sigma \subset \O$
with $P(\Sigma) = 1$
such that,
for each $\o \in \Sigma$, there exists a (unique) global-in-time  solution $u$
to \eqref{NLS1}
with $u|_{t = 0} = \phi^\o$
in the class
\[ S(t) \phi^\o + C(\R: H^{\frac{d-2}{2}} (\R^d)) \subset C(\R:H^s(\R^d)).\]

\end{theorem}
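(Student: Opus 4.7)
The plan is a probabilistic long-time perturbation argument. By Theorem \ref{THM:1} the solution $u=S(t)\phi^\omega+v$ exists almost surely on a short random time interval, so it suffices to prove that, for each fixed $T>0$ and each $\eps>0$, there is a set $\Omega_{T,\eps}\subset\Omega$ with $P(\Omega_{T,\eps})\ge 1-\eps$ on which $v$ extends to $[-T,T]$: letting $\eps\to 0$ shows that extension to $[-T,T]$ is almost sure, and intersecting over $T\in\mathbb N$ yields the required full-measure set $\Sigma$. Fix $T$ and $\eps$, and let $R=R(T,\eps/3)$ be the constant furnished by Hypothesis~(A). By Lemma~\ref{PROP:Str2}, on a set of probability $>1-\eps/3$ one has the global Strichartz bound $\|z\|_{L^{d+2}_{t,x}(\R\times\R^d)}\le K$ for $z=S(t)\phi^\omega$ and some $K=K(\eps,\|\phi\|_{H^s})$, together with analogous bounds on the auxiliary mixed-norm quantities of $z$ that appear in the nonlinear estimates of Section~\ref{SEC:Up}.

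Hypothesis~(B), which holds unconditionally when $d=4$ by \cite{RV,Visan}, supplies for every $w_0\in H^{s_\textup{crit}}(\R^d)$ with $\|w_0\|_{H^{s_\textup{crit}}}\le R$ a global solution $w$ to the defocusing cubic NLS with the a priori space-time bound
\[
\|w\|_{L^{d+2}_{t,x}([0,T]\times\R^d)}\le M:=C(R,T).
\]
The deterministic engine is a long-time perturbation lemma at the critical regularity, to be proved within the $U^p$--$V^p$ adapted function spaces: there exists $\eta=\eta(M)>0$ such that whenever $I\subset[0,T]$ is a subinterval with $\|z\|_{L^{d+2}_{t,x}(I\times\R^d)}<\eta$ and $\|v(\inf I)\|_{H^{s_\textup{crit}}}\le R$, the equation \eqref{NLS2} admits a unique solution $v$ on $I$, and $v-w$ is bounded in $L^\infty_I H^{s_\textup{crit}}$ by a quantity controlled solely by $M$, $R$, and $\eta$.

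Given this, partition $[0,T]$ into $N\lesssim(K/\eta)^{d+2}$ subintervals $I_1,\dots,I_N$ on each of which $\|z\|_{L^{d+2}_{t,x}(I_j\times\R^d)}<\eta$. At step $j$, Hypothesis~(A) forces $\|v(\inf I_j)\|_{H^{s_\textup{crit}}}\le R$, Hypothesis~(B) then produces a fresh deterministic comparison solution $w_j$ on $I_j$ with $\|w_j\|_{L^{d+2}_{t,x}(I_j)}\le M$, and the perturbation lemma extends $v$ from $I_{j-1}$ to $I_j$ while keeping it in the critical space. After $N$ steps one covers $[0,T]$, and time reversal handles $[-T,0]$.

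The principal obstacle is the rigorous formulation and proof of the long-time perturbation lemma at critical regularity in the $U^p$--$V^p$ framework. One must decompose
\[
\N(v+z)-\N(w)=\sum_{a+b+c=3}c_{abc}\,v^{a}w^{b}z^{c}-\N(w)\quad(\text{with appropriate conjugations})
\]
and control each multilinear piece in the dual norm by interpolating the a priori bounds on $v$ (via $R$) and $w$ (via $M$) with the smallness of $\|z\|_{L^{d+2}_{t,x}(I_j)}$, absorbing the unknown $\|v-w\|_{L^\infty_I H^{s_\textup{crit}}}$ by continuity. The bilinear refinement of Lemma~\ref{LEM:Ys1} and the enhanced probabilistic Strichartz bounds for $z$ play the crucial role in taming the resonant contributions of the form $z^{2}\bar v$ at the critical scaling, where naive Hölder estimates fall short.
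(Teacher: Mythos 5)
Your proposal is correct and follows essentially the same architecture as the paper's proof: reduce to showing extension to $[-T,T]$ outside a set of probability $\eps$ and intersect over $T\in\mathbb N$; use Hypothesis (A) to reset the critical norm of $v$ at the start of each subinterval, Hypothesis (B) to produce a deterministic comparison solution $w$ with $L^{d+2}_{t,x}$-bound $M=C(R,T)$, and iterate a critical perturbation lemma in the $U^p$--$V^p$ framework (the paper's Sections 6--8, i.e.\ Lemma \ref{LEM:NL2}, Lemma \ref{LEM:perturb}, and Propositions \ref{PROP:LWP2}, \ref{PROP:perturb2}, \ref{PROP:asGWP}). The one genuine deviation is how smallness of $z$ on subintervals is obtained: you pigeonhole a global-in-time bound from Lemma \ref{PROP:Str2}, whereas the paper partitions $[0,T]$ into intervals of a fixed deterministic length $\tau_*$ and applies the local-in-time estimate of Lemma \ref{PROP:Str1} on each, exploiting the quasi-invariance of the law of $S(t)\phi^\o$ (Remark \ref{REM:asGWP}); both work here because $\|\jb{\nb}^s z\|_{L^q_{t,x}(\R\times\R^d)}$ is almost surely finite for all the exponents $q=4,\tfrac{6(d+2)}{d+4},d+2$ appearing in the $W^s$-norm, and because Hypothesis (A) prevents error accumulation across the outer iteration, so the threshold $\eta(R,M)$ can be fixed before the number of subintervals is known.
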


\noi
The main tool in the proof of Theorem \ref{THM:3}
is a perturbation lemma
for the cubic NLS (Lemma \ref{LEM:perturb}).
Assuming a control on the critical norm (Hypothesis (A)),
we iteratively apply the perturbation lemma in the {\it probabilistic} setting
to show that a solution can be extended to a time depending
only on the critical norm.
Such a perturbative approach  was previously used by
Tao-Vi\c{s}an-Zhang \cite{TVZ} and Killip-Vi\c{s}an with the second and third authors \cite{KOPV}.
The novelty of Theorem \ref{THM:3}
is an application of such a technique in the probabilistic setting.
While there is no invariant measure for the nonlinear evolution in our setting,
we exploit the quasi-invariance property of
the distribution of the linear solution $S(t) \phi^\o$.
See Remark \ref{REM:asGWP}.
Our implementation of the proof of Theorem \ref{THM:3}
is sufficiently general that it can be easily applied to other equations.
See \cite{POC} in the context of the energy-critical  NLW on $\R^d$, $d = 4, 5$, 
where both Hypotheses (A) and (B) are satisfied.

When $d \ne 4$,
the conditional almost sure global well-posedness
in Theorem \ref{THM:3}
has a flavor analogous to
the deterministic conditional global well-posedness
in the critical Sobolev spaces by Kenig-Merle \cite{KM2}
and Killip-Vi\c{s}an \cite{KV}.
In the following, let us discuss the situation when $d = 4$.
In this case,
we only assume Hypothesis (A)
for Theorem \ref{THM:3}.
While it would be interesting to remove this assumption,
 we do not know how to prove the validity of Hypothesis (A)  at this point.
This is mainly due to the lack of conservation of $H[v](t)$,
i.e. the Hamiltonian evaluated at the nonlinear part $v$ of a solution.
In the context of the energy-critical defocusing cubic NLW on $\R^4$,
however,
one can
prove an analogue of Hypothesis (A)
by establishing
 a probabilistic  a priori bound on the energy $\mathcal{E}[v]$ of the nonlinear part
$v$ of a solution, where the energy $\mathcal{E}[v]$
is defined by
\[ \mathcal{E}[v](t)
 = \frac 12 \int_{\R^4}  |\dt v(t, x)|^2 dx + \frac 12 \int_{\R^4}|\nb v(t, x)|^2 dx
+ \frac 14 \int_{\R^4} |v(t, x)|^4 dx.
\]

\noi
As a consequence, the third author \cite{POC}
successfully implemented a probabilistic perturbation argument
and
proved
almost sure global well-posedness of the energy-critical defocusing  cubic NLW on $\R^4$ with randomized initial data below the scaling critical regularity.\footnote{
In \cite{POC}, 
the third author also proved 
almost sure global well-posedness of the energy-critical defocusing  NLW on $\R^5$.
This result was recently extended to 
the dimension 3 by the second and third authors \cite{OP}.} 
We point out that
 the first term in the energy $\mathcal{E}[v]$
involving the time derivative
plays an essential role in establishing
 a probabilistic  a priori bound on the energy for NLW.
It  seems substantially harder
to verify
Hypothesis (A) for NLS, even when $d = 4$.

\smallskip

While Theorem \ref{THM:3}
provides only conditional almost sure global existence,
our last theorem (Theorem \ref{THM:4}) below presents a
 way to  construct global-in-time solutions below the scaling critical regularity
with a large probability.
The main idea is to use the scaling \eqref{scaling} of the equation
for random initial data below the scaling criticality.
For example, suppose that we have a solution $u$ to \eqref{NLS1}
on a short time interval
with a deterministic initial condition $u_0 \in H^s(\R^d)$, $s < s_\textup{crit}$.
In view of \eqref{scaling} and \eqref{scaling2},
by taking $\mu \to 0$,
we see that
the $H^s$-norm of the scaled initial condition
goes to 0.
Thus,
one might think that the problem can be reduced to small data theory.
This, of course, does not work
  in the usual deterministic setting,  since we do not know
how to construct solutions depending only on
 the $H^s$-norm of the initial data, $s < s_\text{crit}$.
Even in the probabilistic setting, this naive idea does not work
if we simply apply the scaling to the randomized function $\phi^\o$
defined in \eqref{R1}.
This is due to the fact that we need to use (sub-)critical space-time norms
controlling the random linear term $z^\o(t) = S(t) \phi^\o$,
which do not become small even if we take $\mu \ll1$.

To resolve this issue,
we consider
 a randomization based on a partition of the frequency space by {\it dilated} cubes.
Given $\mu > 0$,
define $\psi^\mu$ by
\begin{equation}
\psi^\mu(\xi) = \psi(\mu^{-1} \xi).
\label{psi}
\end{equation}

\noi
Then, we can write
a function $\phi$ on $\R^d$
as
\[ \phi = \sum_{n \in  \Z^d} \psi^\mu (D- \mu n) \phi.\]

Now, we introduce the randomization $\phi^{\omega, \mu} $ of $\phi$ on dilated cubes of scale $\mu$ by
\begin{equation}
\phi^{\omega, \mu} : = \sum_{n \in \Z^d} g_n (\omega) \psi^\mu(D- \mu n) \phi,
\label{R3}
\end{equation}

\noi
where  $\{g_n\}_{n \in \Z^d}$ is a sequence of independent mean zero complex-valued random variables,
satisfying \eqref{R2} as before.
Then, we have the following global well-posedness of \eqref{NLS1}
with a large probability.

\begin{theorem}\label{THM:4}
Let $ d\geq 3$ and
$\phi \in H^s(\R^d)$,
for some
 $s \in ( s_d, s_\textup{crit})$,
where $s_d$ is as in \eqref{Sd1}.
Then,
given
the randomization $\phi^{\o, \mu}$  on dilated cubes
of scale $ \mu \ll1 $ defined in \eqref{R3},
satisfying \eqref{R2},
the cubic NLS \eqref{NLS1} on $\R^d$
is globally well-posed with a large probability.
More precisely, for each $0< \eps \ll 1$,
there exists a small dilation scale $\mu_0 = \mu_0(\eps, \|\phi\|_{H^s})> 0$
such that
for each $\mu \in (0, \mu_0)$,
there exists a set $\Omega_\mu \subset \Omega$
with the following properties:
\begin{itemize}
\item[\textup{(i)}]
$P(\Omega_\mu^c) < \eps$,

\item[\textup{(ii)}]
If $\phi^{\o, \mu}$ is the randomization on dilated cubes defined in \eqref{R3},
satisfying \eqref{R2},
then, for each $\o \in \O_\mu$, there exists a (unique) global-in-time  solution $u$
to \eqref{NLS1}
with $u|_{t = 0} = \phi^{\o, \mu}$
in the class
\[ S(t) \phi^\o + C(\R: H^{\frac{d-2}{2}} (\R^d)) \subset C(\R:H^s(\R^d)).\]

\noi
Moreover,
for each $\o \in \O_\mu$,
scattering holds in the sense
that
there exists $v_+^\o \in H^\frac{d-2}{2}(\R^d)$ such that
\[ \| u(t) - S(t)( \phi^{\o, \mu} + v_+^\o) \|_{H^\frac{d-2}{2}(\R^d)} \to 0\]

\noi
as $t \to \infty$.
A similar statement holds for $t \to -\infty$.

\end{itemize}

\end{theorem}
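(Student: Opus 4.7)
The plan is to exploit the scaling symmetry \eqref{scaling} of the cubic NLS to reduce Theorem \ref{THM:4} to the small-data global well-posedness and scattering already established in Theorem \ref{THM:2}. The key algebraic observation, and the reason the dilated-cube randomization \eqref{R3} is well-adapted to this scaling, is that the map $f \mapsto f_\mu$ defined by $f_\mu(x) := \mu^{-1} f(\mu^{-1}x)$ (which takes solutions of \eqref{NLS1} to solutions by \eqref{scaling}) intertwines the two randomizations. Indeed, setting $\phi_\mu(x) := \mu^{-1}\phi(\mu^{-1}x)$ and using \eqref{psi} together with $\widehat{f_\mu}(\xi) = \mu^{d-1}\widehat{f}(\mu\xi)$, a direct Fourier computation gives
\[
\widehat{(\phi^{\o,\mu})_\mu}(\xi)
= \mu^{d-1}\widehat{\phi^{\o,\mu}}(\mu\xi)
= \sum_{n \in \Z^d} g_n(\o)\,\psi(\xi - n)\,\widehat{\phi_\mu}(\xi),
\]
so that $(\phi^{\o,\mu})_\mu = (\phi_\mu)^\o$, the standard Wiener randomization \eqref{R1} of the rescaled profile $\phi_\mu$. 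By \eqref{scaling2}, $\|\phi_\mu\|_{\dot H^s(\R^d)} = \mu^{s_\textup{crit} - s}\|\phi\|_{\dot H^s(\R^d)}$ and $\|\phi_\mu\|_{L^2(\R^d)} = \mu^{s_\textup{crit}}\|\phi\|_{L^2(\R^d)}$; since $s < s_\textup{crit}$ and $d \geq 3$, both prefactors tend to $0$ as $\mu \to 0$, so $\|\phi_\mu\|_{H^s(\R^d)} \to 0$.

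I then apply Theorem \ref{THM:2} to this small profile. Writing $\phi_\mu = \ld\,\tilde\phi$ with $\ld := \mu^{s_\textup{crit} - s}$ and $\tilde\phi := \ld^{-1}\phi_\mu$, one checks that $\|\tilde\phi\|_{H^s(\R^d)} \les \|\phi\|_{H^s(\R^d)}$ uniformly in $0 < \mu \leq 1$ (the $\dot H^s$-part is exactly $\|\phi\|_{\dot H^s}$, and the $L^2$-part is $\mu^s\|\phi\|_{L^2} \leq \|\phi\|_{L^2}$ since $s > s_d > 0$), while by linearity of \eqref{R1} one has $(\phi_\mu)^\o = \ld\,\tilde\phi^\o$. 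For $\mu_0 = \mu_0(\eps, \|\phi\|_{H^s})$ sufficiently small and every $\mu \in (0, \mu_0)$, Theorem \ref{THM:2} applied to $\tilde\phi$ with small parameter $\ld$ produces a set $\O_\mu \subset \O$ with
\[
P(\O_\mu^c) \les \exp\bigl(-c/(\mu^{2(s_\textup{crit} - s)}\|\phi\|_{H^s}^2)\bigr) < \eps
\]
such that for each $\o \in \O_\mu$ there exists a unique global solution $v \in S(t)(\phi_\mu)^\o + C(\R; H^{\frac{d-2}{2}}(\R^d))$ to \eqref{NLS1} with $v|_{t=0} = (\phi_\mu)^\o$ that scatters in $H^{\frac{d-2}{2}}(\R^d)$ to some $(\phi_\mu)^\o + v_+^\o$ as $t \to \pm\infty$.

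Finally, I undo the scaling. Defining $u(t, x) := \mu v(\mu^2 t, \mu x)$, the symmetry \eqref{scaling} shows that $u$ is a global solution to \eqref{NLS1} with $u|_{t=0} = \mu(\phi_\mu)^\o(\mu\,\cdot) = \phi^{\o,\mu}$. The identity $(S(\mu^2 t) f)(\mu x) = \mu^{-1}(S(t) g)(x)$ with $g(x) := \mu f(\mu x)$, together with the fact that the spatial dilation is a bounded operation on $H^{\frac{d-2}{2}}(\R^d)$ for each fixed $\mu > 0$, transfers the class statement to $u \in S(t)\phi^{\o,\mu} + C(\R; H^{\frac{d-2}{2}}(\R^d))$ and the scattering of $v$ to scattering of $u$ with asymptote $\phi^{\o,\mu} + \tilde v_+^\o$, where $\tilde v_+^\o(x) := \mu v_+^\o(\mu x)$. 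The one genuinely new ingredient is the Fourier identity $(\phi^{\o,\mu})_\mu = (\phi_\mu)^\o$, which encodes the compatibility of the dilated randomization \eqref{R3} with the scaling \eqref{scaling} and makes the reduction to small data via Theorem \ref{THM:2} possible; the remainder is a careful but routine tracking of norms, function spaces, and the Schr\"odinger group under the dilation.
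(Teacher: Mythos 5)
Your proposal is correct and follows essentially the same route as the paper: the key identity $(\phi^{\o,\mu})_\mu = (\phi_\mu)^\o$ (the paper's \eqref{D2}), the observation that $\|\phi_\mu\|_{H^s}\to 0$ since $s<s_\textup{crit}$, reduction to the small-data global theory, and undoing the scaling via \eqref{S_mu} to transfer global existence and scattering back to $u$. The only cosmetic difference is that the paper re-runs the proof of Theorem \ref{THM:2} by directly defining the good sets $\O_{1,\mu},\O_{2,\mu}$ on which the global Strichartz and $H^s$ norms of $S(t)\phi^{\o,\mu}_\mu$ are below the small-data threshold $\eta_2$, whereas you invoke Theorem \ref{THM:2} as a black box applied to the rescaled profile $\tilde\phi=\mu^{-(s_\textup{crit}-s)}\phi_\mu$ with small parameter $\mu^{s_\textup{crit}-s}$; the resulting probability bound and choice of $\mu_0$ agree with \eqref{D2a}--\eqref{D3}.
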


We conclude this introduction with several remarks.

\begin{remark}\rm
In probabilistic well-posedness results \cite{BO7, Bo97, CO, NS} for NLS on $\T^d$,
random initial data are assumed to be of  the following specific form:
\begin{equation}
\label{I3}
 u_0^\omega(x) = \sum_{n \in \Z^d} \frac{g_n(\o)}{(1+|n|^2)^\frac{\al}{2}}e^{i n \cdot x},
 \end{equation}

\noi
where $\{g_n\}_{n \in \Z^d}$ is a sequence of independent
complex-valued standard Gaussian random variables.
The expression \eqref{I3}
has a close connection to the study of invariant measures
and hence it is of importance.
At the same time, due to the
lack of a full range of Strichartz estimates on $\T^d$,
one could not handle a general randomization of a given function
as in \eqref{I2}.
In this paper, we consider NLS on $\R^d$
and thus we do not encounter this issue
thanks to a full range of the Strichartz estimates.
For NLW, finite speed of propagation allows us to use a full range of Strichartz estimates
even on compact domains, at least locally in time.
Thus, one does not encounter such an issue.

\end{remark}

\begin{remark}\label{REM:LM} \rm
In a recent preprint,
L\"uhrmann-Mendelson \cite{LM}
considered the defocusing NLW on $\R^3$
with randomized initial data, essentially given by  \eqref{R1},
below the critical regularity
and proved almost sure global well-posedness in the energy-subcritical case,
following the method developed in \cite{CO},
namely an adaptation of Bourgain's high-low method \cite{Bo98}
in the probabilistic setting.
As Bourgain's high-low method is a subcritical tool, their global result misses the energy-critical case.\footnote{
In \cite{OP}, the second and third authors recently 
proved almost sure global well-posedness of the energy-critical defocusing  quintic NLW on $\R^3$.} 

The third author \cite{POC}
recently proved
almost sure global well-posedness
of the  energy-critical  defocusing NLW on $\R^d$, $d = 4, 5$, 
with randomized initial data
below the  critical regularity.
The argument is based on an application of a perturbation lemma as in Theorem \ref{THM:3}
along with a probabilistic  a priori control on the energy,
which is not available for the cubic NLS \eqref{NLS1}.

\end{remark}

This paper is organized as follows.
In Section \ref{SEC:2}, we state some probabilistic lemmata.
In Section \ref{SEC:Up},
we go over the basic definitions and properties
of function spaces involving the $U^p$- and $V^p$-spaces.
We prove the key nonlinear estimates in Section \ref{SEC:4}
and then use them to prove Theorems \ref{THM:1} and \ref{THM:2}
in Section \ref{SEC:THM12}.
We divide the proof of Theorem \ref{THM:3} into three sections.
In Sections \ref{SEC:6} and  \ref{SEC:7},
we discuss the Cauchy theory for the defocusing cubic NLS with a deterministic perturbation.
We implement these results in the probabilistic setting and prove Theorem \ref{THM:3}
in Section \ref{SEC:8}.
In Section \ref{SEC:9}, we show how Theorem \ref{THM:4}
follows from the arguments in Sections \ref{SEC:4} and \ref{SEC:THM12},
once we consider a randomization on dilated cubes.
In Appendix \ref{SEC:A}, we state and prove some additional properties
of the function spaces defined in Section \ref{SEC:Up}.

Lastly, note that
we present the proofs of  these results only for positive times
in view of the time reversibility of \eqref{NLS1}.

\section{Probabilistic lemmata}
\label{SEC:2}

In this section, we summarize the probabilistic lemmata used in this paper.
In particular, the probabilistic Strichartz estimates (Lemmata \ref{PROP:Str1}
and \ref{PROP:Str2}) play an essential role.
First, we recall the usual Strichartz estimates on $\R^d$ for readers' convenience.
We say that
a pair  $(q, r)$ is Schr\"odinger admissible
if it satisfies
\begin{equation}
\frac{2}{q} + \frac{d}{r} = \frac{d}{2}
\label{Admin}
\end{equation}

\noi
 with $2\leq q, r \leq \infty$
and $(q, r, d) \ne (2, \infty, 2)$.
Then, the following Strichartz estimates
are known to hold.

\begin{lemma}[\cite{Strichartz, Yajima, GV, KeelTao}]\label{LEM:Str0}
Let $(q, r)$ be  Schr\"odinger admissible.
Then, we have
\begin{equation}
\| S(t) \phi\|_{L^q_t L^r_x (\R\times \R^d)} \lesssim \|\phi\|_{L^2_x(\R^d)}.
\label{Str0}
\end{equation}

\end{lemma}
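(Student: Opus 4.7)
The plan is to derive the Strichartz bound via the $TT^*$ method of Ginibre--Velo and Keel--Tao, reducing it to two fundamental inputs: the $L^2$-isometry of the propagator and the pointwise dispersive decay. By duality, the desired bound $\|S(t)\phi\|_{L^q_t L^r_x} \lesssim \|\phi\|_{L^2_x}$ is equivalent to the adjoint statement $\bigl\|\int_{\R} S(-s) F(s)\, ds\bigr\|_{L^2_x} \lesssim \|F\|_{L^{q'}_t L^{r'}_x}$, which via the $TT^*$ identity is in turn equivalent to the bilinear estimate
\[
\biggl| \iint_{\R \times \R} \bigl\langle S(t-s) F(s), G(t) \bigr\rangle_{L^2_x}\, ds\, dt \biggr|
\lesssim \|F\|_{L^{q'}_t L^{r'}_x} \|G\|_{L^{q'}_t L^{r'}_x}.
\]

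The two ingredients are the conservation of the $L^2$-norm $\|S(t)\phi\|_{L^2_x} = \|\phi\|_{L^2_x}$ and the explicit convolution-kernel representation of $e^{it\Dl}$, which yields the pointwise dispersive estimate $\|S(t)\phi\|_{L^\infty_x} \leq (4\pi |t|)^{-d/2}\|\phi\|_{L^1_x}$. Riesz--Thorin interpolation between these two endpoints gives
\[
\|S(t-s)F(s)\|_{L^r_x} \lesssim |t-s|^{-d(\frac{1}{2} - \frac{1}{r})}\, \|F(s)\|_{L^{r'}_x},
\]
and the admissibility relation $\frac{2}{q} + \frac{d}{r} = \frac{d}{2}$ identifies the decay exponent as exactly $\frac{2}{q}$, which is precisely the power demanded by the one-dimensional Hardy--Littlewood--Sobolev inequality in order to control the associated time convolution from $L^{q'}_t$ into $L^q_t$.

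For every non-endpoint admissible pair, i.e.\ $q > 2$, combining the above spatial bound with H\"older's inequality in $x$ and the Hardy--Littlewood--Sobolev inequality in $t$ closes the bilinear estimate directly. The main obstacle is the endpoint case $(q,r) = \bigl(2, \tfrac{2d}{d-2}\bigr)$ available for $d \geq 3$, where the Hardy--Littlewood--Sobolev step fails at the critical integrability. To treat this, one invokes the bilinear real-interpolation argument of Keel--Tao: decompose the time variable dyadically as $|t-s| \sim 2^k$, establish an off-diagonal bilinear bound on each dyadic shell by real interpolation between the dispersive and $L^2$ estimates (with an off-diagonal parameter just inside the unit square), and then sum the resulting geometric series in $k$ via an atomic decomposition in the Lorentz space $L^{2,1}_t$ to recover the strong $L^2_t$ endpoint. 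This refinement is the only technically delicate step; with it in hand, all admissible pairs, including the endpoint, are covered.
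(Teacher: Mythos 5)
Your proposal is correct and is precisely the standard $TT^*$/dispersive-estimate argument of Ginibre--Velo together with the Keel--Tao bilinear interpolation for the endpoint, which is exactly what the paper's citations \cite{Strichartz, Yajima, GV, KeelTao} refer to; the paper itself gives no proof of this lemma, only the references. No gaps: the non-endpoint case via Hardy--Littlewood--Sobolev and the endpoint case via the dyadic/Lorentz-space refinement are both correctly identified (the remaining pair $(q,r)=(\infty,2)$ being just $L^2$-conservation).
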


\noi
In particular, when $q = r$, we have $q = r = \frac{2(d+2)}{d}$.
By applying Sobolev inequality and \eqref{Str0}, we also have
\begin{equation}
\| S(t) \phi\|_{L^p_{t, x} (\R\times \R^d)}
\lesssim \big\||\nb|^{\frac d2 - \frac{d+2}{p}}\phi\big\|_{L^2_x(\R^d)}.
\label{Str0a}
\end{equation}

\noi
for $p \geq \frac{2(d+2)}{d}$.
Recall that the derivative loss in \eqref{Str0a} depends
only on the size of the frequency support and not its location.
Namely,  if $\ft{\phi}$ is supported on a cube $Q$ of side length $N$,
then we have
\begin{equation}
\| S(t) \phi\|_{L^p_{t, x} (\R\times \R^d)}
\lesssim N^{\frac d2 - \frac{d+2}{p}}\|\phi\|_{L^2_x(\R^d)},
\label{Str0b}
\end{equation}
regardless of the center of the cube $Q$.

Next, we present improvements
of the Strichartz estimates under the Wiener randomization \eqref{R1} and where, throughout, we assume \eqref{R2}.
See \cite{BOP1} for the proofs.

\begin{lemma}[Improved local-in-time  Strichartz estimate]\label{PROP:Str1}
Given $\phi \in L^2(\R^d)$,
let $\phi^\o$ be its Wiener randomization defined in \eqref{R1}, satisfying  \eqref{R2}.
Then,
given finite  $q, r \geq 2$,
there exist $C, c>0$ such that
\begin{align*}
P\Big( \|S(t) \phi^\omega\|_{L^q_t L^r_x([0, T]\times \R^d)}> \ld\Big)
\leq C\exp \bigg(-c \frac{\ld^2}{ T^\frac{2}{q}\|\phi\|_{L^2}^{2}}\bigg)
\end{align*}
	
\noi
for all  $ T > 0$ and $\lambda>0$.
In particular,
with $\ld = T^\theta $, we have
\begin{equation}
\|S(t) \phi^\o\|_{L^q_tL^r_x([0, T]\times \R^d)}
\les T^\theta
\label{Str1a}
\end{equation}

\noi
outside a set of probability
\[
\leq C\exp \bigg(-c \frac{1}{ T^{2(\frac{1}{q}-\theta)}\|\phi\|_{L^2}^{2}}\bigg).
\]
Note that
this probability
can be made arbitrarily small by letting $T\to 0$
as long as $\theta < \frac{1}{q}$.

\end{lemma}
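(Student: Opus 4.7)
The plan is to reduce the probability bound to a moment estimate of the form
\begin{equation*}
\|S(t)\phi^\omega\|_{L^p_\omega L^q_tL^r_x([0,T]\times\R^d)} \lesssim \sqrt{p}\, T^{1/q}\|\phi\|_{L^2}
\end{equation*}
for all sufficiently large $p$, and then to optimize $p$ against $\lambda$ via Chebyshev's inequality. The exponential tail $\exp(-c\lambda^2/\cdot)$ is Gaussian, which corresponds precisely to a $\sqrt{p}$-growth of the $L^p_\omega$-norm; this in turn is exactly the Khintchine regime guaranteed by the sub-Gaussian assumption \eqref{R2}.

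To establish the moment estimate, I would first take $p \geq \max(q,r)$ and apply Minkowski's inequality to move $L^p_\omega$ inside $L^q_tL^r_x$. Since $S(t)\phi^\omega = \sum_{n\in\Z^d} g_n(\omega)\, S(t)\psi(D-n)\phi$ and the $g_n$ are independent and satisfy \eqref{R2}, the standard Khintchine-type inequality gives, pointwise in $(t,x)$,
\begin{equation*}
\|S(t)\phi^\omega(t,x)\|_{L^p_\omega} \lesssim \sqrt{p}\bigg(\sum_{n\in\Z^d}|S(t)\psi(D-n)\phi(x)|^2\bigg)^{1/2}.
\end{equation*}
Using $r\geq 2$ to push $\ell^2_n$ outside $L^r_x$ by Minkowski once more and the unitarity of $S(t)$ on $L^2_x$, it remains to estimate each $\|\psi(D-n)\phi\|_{L^r_x}$. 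The key ingredient is a \emph{uniform-in-$n$} Bernstein inequality, which is the localized form of \eqref{Str0b} at unit frequency scale: since $\widehat{\psi(D-n)\phi}$ is supported in the unit cube $n+[-1,1]^d$, one has $\|\psi(D-n)\phi\|_{L^r_x} \lesssim \|\psi(D-n)\phi\|_{L^2_x}$ with an implicit constant independent of $n$. Almost-orthogonality of the pieces $\psi(D-n)\phi$ then yields $\bigl(\sum_n \|\psi(D-n)\phi\|_{L^2_x}^2\bigr)^{1/2} \lesssim \|\phi\|_{L^2}$, and the time integration on $[0,T]$ contributes the trivial factor $T^{1/q}$ since the estimate so far is uniform in $t$.

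Combining all of this and applying Chebyshev's inequality gives
\begin{equation*}
P\bigl(\|S(t)\phi^\omega\|_{L^q_tL^r_x} > \lambda\bigr) \leq \lambda^{-p}\bigl(C\sqrt{p}\,T^{1/q}\|\phi\|_{L^2}\bigr)^{p},
\end{equation*}
and choosing $p \sim \lambda^2/(T^{2/q}\|\phi\|_{L^2}^2)$ (valid once this quantity exceeds $\max(q,r)$, with the remaining range handled by enlarging $C$) produces the stated Gaussian decay. The specialization \eqref{Str1a} then follows by plugging $\lambda = T^\theta$ and observing that $2(1/q - \theta) > 0$ whenever $\theta < 1/q$. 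The only non-cosmetic step is the uniform-in-$n$ Bernstein bound, which is where the structure of the Wiener decomposition — uniform cubes of unit side, rather than dyadic Littlewood-Paley annuli — is used essentially: it is precisely the reason that this decomposition behaves so well with respect to integrability improvements under randomization.
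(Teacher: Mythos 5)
Your argument is correct and is essentially the proof that the paper defers to \cite{BOP1}: Minkowski's inequality for $p\geq\max(q,r)$, the Khintchine-type large-deviation bound furnished by \eqref{R2}, the unit-scale (hence uniform-in-$n$) Bernstein inequality, almost orthogonality of the $\psi(D-n)\phi$, the trivial $T^{1/q}$ factor from H\"older in time, and Chebyshev's inequality optimized in $p$. The only presentational slip is in the ordering of the last reductions: Bernstein must be applied directly to $S(t)\psi(D-n)\phi$ (whose Fourier support is the same unit cube) before invoking unitarity of $S(t)$ on $L^2_x$, since $S(t)$ is not bounded on $L^r_x$ for $r \neq 2$; this changes nothing in substance.
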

\noi

The next lemma
states
an improvement of the Strichartz estimates in the global-in-time setting.

\begin{lemma}[Improved global-in-time Strichartz estimate]\label{PROP:Str2}
Given $\phi \in L^2(\R^d)$,
let $\phi^\o$ be its Wiener randomization defined in \eqref{R1},
satisfying  \eqref{R2}.
Given a Schr\"odinger admissible pair $(q, r)$ with $q, r < \infty$,
let $\wt {r} \geq r$.
Then, there exist $C, c>0$ such that
\begin{align*}
P\Big( \|S(t) \phi^\omega\|_{L^q_t L^{\wt{r}}_x ( \R \times \R^d)} > \ld\Big)
\leq Ce^{-c \ld^2 \|\phi\|_{L^2}^{-2}}.
\end{align*}

\noi
In particular, given any small $\eps > 0$, we have
\[ \|S(t) \phi^\omega\|_{L^q_t L^{\wt{r}}_x ( \R \times \R^d)}
\les \Big( \log \frac{1}{\eps}\Big)^\frac{1}{2} \|\phi\|_{L^2(\R^d)} \]

\noi
outside a set of probability $< \eps$.

\end{lemma}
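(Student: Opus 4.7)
\medskip

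\noindent
\textbf{Proof proposal.}
The plan is to reduce the tail bound to an $L^p_\omega$ moment estimate via Chebyshev, and then to use Khintchine-type inequalities together with unit-scale Bernstein and classical Strichartz. Concretely, write
\[
S(t)\phi^\omega = \sum_{n\in\Z^d} g_n(\omega)\, S(t)\psi(D-n)\phi.
\]
The hypothesis \eqref{R2} implies, through a standard argument, a Khintchine-type bound of the form
\[
\bigl\| \textstyle\sum_n g_n(\omega) c_n \bigr\|_{L^p(\Omega)} \le C\sqrt{p}\,\|c_n\|_{\ell^2_n}
\]
for every $p\ge 2$ and every sequence $(c_n)$ (real or complex). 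I would first fix $p\ge \max(q,\tilde r, 2)$ and apply Minkowski's integral inequality twice to swap $L^p_\omega$ past $L^q_tL^{\tilde r}_x$, which is allowed precisely because $p$ is the largest exponent.

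Second, for each fixed $(t,x)$ apply the Khintchine-type bound pointwise to obtain
\[
\|S(t)\phi^\omega\|_{L^p_\omega L^q_t L^{\tilde r}_x}
\;\lesssim\; \sqrt{p}\;\Bigl\|\,\Bigl(\textstyle\sum_n |S(t)\psi(D-n)\phi|^2\Bigr)^{1/2}\Bigr\|_{L^q_tL^{\tilde r}_x}.
\]
Since $\tilde r\ge 2$ and $q\ge 2$, two further applications of Minkowski (passing the $\ell^2_n$ norm outside the $L^{\tilde r}_x$ and then outside the $L^q_t$) give
\[
\Bigl\|\,\Bigl(\textstyle\sum_n |S(t)\psi(D-n)\phi|^2\Bigr)^{1/2}\Bigr\|_{L^q_tL^{\tilde r}_x}
\;\le\; \Bigl(\textstyle\sum_n \|S(t)\psi(D-n)\phi\|_{L^q_tL^{\tilde r}_x}^{2}\Bigr)^{1/2}.
\]

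Third, I would exploit the unit-scale frequency localization: each $\psi(D-n)\phi$ has Fourier support in a unit cube, so Bernstein's inequality (with a constant independent of $n$, since it depends only on the size of the cube) yields
\[
\|S(t)\psi(D-n)\phi\|_{L^{\tilde r}_x}\;\lesssim\;\|S(t)\psi(D-n)\phi\|_{L^r_x},
\]
uniformly in $t$ and $n$. Then the admissibility of $(q,r)$ and the classical Strichartz estimate of Lemma \ref{LEM:Str0} give $\|S(t)\psi(D-n)\phi\|_{L^q_tL^r_x}\lesssim \|\psi(D-n)\phi\|_{L^2_x}$, and the near-orthogonality of $\{\psi(\,\cdot-n)\}_{n\in\Z^d}$ together with Plancherel yield $\sum_n \|\psi(D-n)\phi\|_{L^2_x}^2\lesssim \|\phi\|_{L^2}^2$. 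Chaining these inequalities gives the master moment bound
\[
\|S(t)\phi^\omega\|_{L^p_\omega L^q_tL^{\tilde r}_x}\;\le\; C\sqrt{p}\,\|\phi\|_{L^2},\qquad p\ge \max(q,\tilde r,2).
\]

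Finally I would conclude by Chebyshev: for every $\lambda>0$ and admissible $p$,
\[
P\bigl(\|S(t)\phi^\omega\|_{L^q_tL^{\tilde r}_x}>\lambda\bigr)
\;\le\; \lambda^{-p}\bigl(C\sqrt p\,\|\phi\|_{L^2}\bigr)^{p},
\]
and optimizing by choosing $p\sim \lambda^2/(eC^2\|\phi\|_{L^2}^2)$ (taken $\ge \max(q,\tilde r,2)$; otherwise the claim is trivial by adjusting $C$) produces the Gaussian tail $Ce^{-c\lambda^2/\|\phi\|_{L^2}^2}$. Setting $\lambda = (c^{-1}\log(1/\varepsilon))^{1/2}\|\phi\|_{L^2}$ yields the stated logarithmic bound. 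The step I expect to require the most care is the Khintchine inequality with the correct $\sqrt p$ dependence under the sole assumption \eqref{R2}: one has to exponentiate and optimize in the parameter $\gamma$ in \eqref{R2} to get the sub-Gaussian moment bound uniformly in $n$, separating real and imaginary parts of $g_n$. Once that is in place, the rest is a routine chain of Minkowski/Bernstein/Strichartz.
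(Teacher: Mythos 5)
Your argument is correct and is essentially the proof given in the cited reference \cite{BOP1} (the paper itself only points there): the large-deviation/Khintchine bound derived from \eqref{R2}, Minkowski with $p\ge\max(q,\tilde r,2)$, unit-scale Bernstein (uniform in $n$ by translation invariance in frequency), the classical Strichartz estimate on each almost-orthogonal piece, and Chebyshev with optimization in $p$. No gaps; the one point you flag as delicate (the $\sqrt{p}$ moment growth from \eqref{R2}) is indeed the standard Burq--Tzvetkov-type lemma and works exactly as you describe.
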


\noi
Recall that the diagonal Strichartz admissible index is given by
$p= \frac{2(d+2)}{d}$.
In the diagonal case $q = \wt r$,
it is easy to see that the condition of Lemma \ref{PROP:Str2} is satisfied if $q = \wt r \geq p = \frac{2(d+2)}{d}$.
In the following, we apply Lemma \ref{PROP:Str2} in this setting.

We also need the following lemma on the control of the size of $H^s$-norm of $\phi^\o$.

\begin{lemma} \label{LEM:Hs}
Given  $\phi \in H^s(\R^d)$, let $\phi^\o$ be its Wiener randomization
defined in \eqref{R1}, satisfying  \eqref{R2}.
Then, we have
\begin{align}
P\Big( \| \phi^\omega \|_{ H^s(  \R^d)} > \ld\Big)
\leq C e^{-c \ld^2  \|\phi\|_{ H^s}^{-2}}.
\label{Hs1}
\end{align}

\end{lemma}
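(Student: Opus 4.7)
The plan is to derive the Gaussian-type tail bound \eqref{Hs1} from a uniform moment estimate of the form $\|\phi^\omega\|_{L^p_\omega H^s_x} \lesssim \sqrt{p}\,\|\phi\|_{H^s}$ for all $p \geq 2$, and then convert this moment bound into a tail bound via Chebyshev's inequality followed by an optimization in $p$. This is the standard route from subgaussian moment growth to subgaussian concentration.

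The first step is to establish the following Khintchine-type inequality as a consequence of \eqref{R2}: for any finite sequence $(c_n) \subset \C$ and any $p \geq 2$,
\begin{equation*}
\Big\| \sum_{n} g_n(\omega)\, c_n \Big\|_{L^p(\O)} \leq C \sqrt{p}\, \Big(\sum_n |c_n|^2\Big)^{1/2}.
\end{equation*}
The hypothesis \eqref{R2} says that each $g_n$ is subgaussian (its moment generating function is dominated by that of a Gaussian). By independence and \eqref{R2}, the sum $\sum g_n c_n$ is itself subgaussian with variance proxy $\lesssim \sum |c_n|^2$, and the standard tail-to-moment conversion produces the $\sqrt{p}$ growth above with a constant independent of the sequence length.

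Applying this pointwise in the frequency variable $\xi$ to
\begin{equation*}
\widehat{\phi^\omega}(\xi) = \Big(\sum_{n \in \Z^d} g_n(\omega)\, \psi(\xi - n)\Big) \widehat{\phi}(\xi),
\end{equation*}
I obtain
\begin{equation*}
\| \widehat{\phi^\omega}(\xi) \|_{L^p_\omega} \lesssim \sqrt{p}\, \Big( \sum_{n \in \Z^d} |\psi(\xi - n)|^2 \Big)^{1/2} |\widehat{\phi}(\xi)| \lesssim \sqrt{p}\, |\widehat{\phi}(\xi)|,
\end{equation*}
where the last inequality uses the bounded overlap of $\{\psi(\cdot - n)\}_n$ built into \eqref{mod1a}. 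Since $p \geq 2$, Minkowski's integral inequality (which allows moving $L^p_\omega$ inside $L^2_\xi$ in this order of indices) then yields
\begin{equation*}
\| \phi^\omega \|_{L^p_\omega H^s_x} = \| \jb{\xi}^s \widehat{\phi^\omega}(\xi) \|_{L^p_\omega L^2_\xi} \leq \| \jb{\xi}^s \widehat{\phi^\omega}(\xi) \|_{L^2_\xi L^p_\omega} \lesssim \sqrt{p}\, \|\phi\|_{H^s}.
\end{equation*}

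Finally, for any $p \geq 2$, Chebyshev's inequality gives
\begin{equation*}
P\bigl( \|\phi^\omega\|_{H^s} > \lambda \bigr) \leq \lambda^{-p}\, \E\|\phi^\omega\|_{H^s}^p \leq \Big( \frac{C \sqrt{p}\, \|\phi\|_{H^s}}{\lambda} \Big)^{p}.
\end{equation*}
Choosing $p = c\, \lambda^2 / \|\phi\|_{H^s}^2$ for a suitably small constant $c > 0$ (which is admissible when $\lambda \gtrsim \|\phi\|_{H^s}$; the complementary range is handled by trivially enlarging the constant $C$ in \eqref{Hs1}) makes the bracketed quantity $\leq e^{-1}$, yielding the desired decay $C\exp(-c\lambda^2 \|\phi\|_{H^s}^{-2})$. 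No genuine obstacle arises in this argument; the only points to monitor are that the Khintchine constant is uniform in the sequence (guaranteed by the uniform subgaussian bound \eqref{R2}) and that the Minkowski swap is in the direction $L^p_\omega L^2_\xi \hookrightarrow L^2_\xi L^p_\omega$, which is valid precisely for $p \geq 2$.
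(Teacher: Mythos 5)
Your proof is correct and follows exactly the standard argument that the paper defers to \cite{BOP1}: the subgaussian hypothesis \eqref{R2} yields a Khintchine-type bound $\|\sum_n g_n c_n\|_{L^p(\O)} \lesssim \sqrt{p}\,\|c_n\|_{\l^2}$, which combined with Minkowski's inequality (in the correct direction, $p \geq 2$) gives $\|\phi^\o\|_{L^p_\o H^s_x} \lesssim \sqrt{p}\,\|\phi\|_{H^s}$, and Chebyshev plus optimization in $p$ produces \eqref{Hs1}. The handling of the small-$\ld$ range by enlarging $C$ is also the standard closing step, so there is nothing to add.
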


We conclude this section by introducing some notations
involving Strichartz and space-time Lebesgue spaces.
In the sequel, given an interval $I\subset\R$, we
often use
$L^q_tL^r_x(I)$
to
denote $L^q_tL^r_x(I\times \R^d)$.
We also define the $\dot S^{s_\textup{crit}}(I)$-norm in the usual manner by setting
\[ \|u \|_{\dot S^{s_\textup{crit}}(I)}
:= \sup\Big\{ \big\| |\nb|^\frac{d-2}{2} u \big\|_{L^q_tL^r_x(I\times \R^d)
}\Big\},\]

\noi
where the supremum is taken over all Schr\"odinger admissible pairs $(q, r)$.

\section{Function spaces and their properties}
\label{SEC:Up}

In this section, we go over the basic definitions and properties of
the $U^p$- and $V^p$-spaces, developed by
Tataru, Koch, and their collaborators \cite{KochT, HHK, HTT1}.
These spaces have been very effective in
establishing well-posedness  of various dispersive PDEs in critical regularities.
See Hadac-Herr-Koch \cite{HHK} and Herr-Tataru-Tzvetkov \cite{HTT1} for detailed proofs.

Let $H$ be a separable Hilbert space over $\C$.
 In particular, it will be either $H^s(\R^d)$ or $\C$.
 Let $\mathcal{Z}$ be the collection of finite partitions $\{t_k\}_{k = 0}^K$ of $\R$:
 $-\infty < t_0 < \cdots < t_K \leq \infty$.
 If $t_K = \infty$,
 we use the convention $u(t_K) :=0$
 for all functions $u:\R\to H$.
We use $\chi_I$ to denote the sharp characteristic function
of a set $I \subset \R$.

 \begin{definition} \label{DEF:X1}\rm
Let $1\leq p < \infty$.

\smallskip
\noi
\textup{(i)}
A $U^p$-atom is defined by a step function $a:\R\to H$
of the form
\[ a = \sum_{k = 1}^K \phi_{k - 1} \chi_{[t_{k-1}, t_k)}, \]

\noi
where $\{t_k\}_{k = 0}^K \in \mathcal{Z}$
and $\{\phi_k\}_{k = 0}^{K-1} \subset H$
with $\sum_{k = 0}^{K-1} \|\phi_k\|_H^p = 1$.
Then, we define the atomic space $U^p(\R; H)$
to be the collection of functions $u:\R\to H$
of the form
\begin{equation} u = \sum_{j = 1}^\infty \ld_j a_j,
\quad \text{ where $a_j$'s are $U^p$-atoms and $\{\ld_j\}_{j \in \mathbb{N}}\in \l^1(\mathbb{N}; \C)$},
\label{X1}
\end{equation}

\noi
with the norm
\[ \|u\|_{U^p(\R; H)} : = \inf \Big\{ \|{ \bf \ld} \|_{\l^1}
: \eqref{X1} \text{ holds with } \ld = \{\ld_j \}_{j \in \mathbb{N}}
\text{ and some $U^p$-atoms } a_j\Big\}.\]

\smallskip
\noi
\textup{(ii)}
We define the space $V^p(\R; H)$ of  functions of bounded $p$-variation
to be the collection of functions $u : \R \to H$
with $\|u\|_{V^p(\R; H)} < \infty$,
where the $V^p$-norm is defined by
\[
\|u\|_{V^p(\R; H)}
:= \sup_{\{t_k\}_{k = 0}^K \in \mathcal{Z}}
\bigg(\sum_{k = 1}^K\|u(t_k) - u(t_{k-1})\|_H^p\bigg)^\frac{1}{p}.
\]

\noi
We also define $V^p_\text{rc}(\R; H)$
to be the closed subspace of all right-continuous functions
in $V^p(\R; H)$ such that
$\lim_{t \to -\infty} u(t) = 0$.

\smallskip
\noi
\textup{(iii)}
Let $s \in \R$.
We define $U^p_\Dl H^s$ (and $V^p_\Dl H^s$, respectively)
to be the spaces of all functions $u: \R \to H^s(\T^d)$
such that the following
$U^p_\Dl H^s$-norm (and $V^p_\Dl H^s$-norm, respectively)
is finite:
\[ \|u \|_{U^p_\Dl H^s} := \|S(-t) u\|_{U^p(\R; H^s)}
\quad \text{and} \quad
\|u \|_{V^p_\Dl H^s} := \|S(-t) u\|_{V^p(\R; H^s)}, \]

\noi
where $S(t) = e^{it\Dl}$ denotes the linear propagator for \eqref{NLS1}.
We use $V^p_{\text{rc}, \Dl} H^s$
to denote the subspace of right-continuous functions in
$V^p_\Dl H^s$.

 \end{definition}

\begin{remark}\label{REM:UpVp} \rm
Note that
the spaces $U^p(\R; H)$,
$V^p(\R; H)$,  and $V^p_\text{rc}(\R; H)$
are Banach spaces.
The closed subspace
of continuous functions in $U^p(\R; H)$ is also a Banach space.
Moreover, we have the following embeddings:
\begin{equation*}
U^p(\R; H) \hookrightarrow V^p_\text{rc}(\R; H)
\hookrightarrow U^q(\R; H)   \hookrightarrow L^\infty(\R; H)
\end{equation*}

\noi
for $ 1\leq p < q < \infty$.
Similar embeddings hold for $U^p_\Dl H^s$ and $V^p_\Dl H^s$.
\end{remark}

Next, we state a transference principle and an interpolation result.

\begin{lemma}\label{LEM:Xinterpolate}
\textup{(i)}
{\rm (Transference principle)}
Suppose that we have
\[ \big\| T(S(t) \phi_1, \dots, S(t) \phi_k)\big\|_{L^p_t L^q_x(\R\times \R^d)}
\les \prod_{j = 1}^k \|\phi_j\|_{L^2_x}\]

\noi
for some $1\leq p, q \leq \infty$.
Then, we have
\[ \big\| T(u_1,  \dots, u_k)\big\|_{L^p_t L^q_x(\R\times \R^d)}
\les \prod_{j = 1}^k \|u_j\|_{U^p_\Dl L^2_x}.\]

\noi
\textup{(ii)}
{\rm (Interpolation)}
 Let $E$ be a Banach space.
Suppose that $T: U^{p_1}\times \cdots \times U^{p_k} \to E$
is a bounded $k$-linear operator such that
\[ \|T(u_1, \dots, u_k)\|_{E} \leq C_1 \prod_{j = 1}^k \|u_j\|_{U^{p_j}}\]

\noi
for some $p_1, \dots, p_k > 2$.
Moreover, assume that there exists $C_2 \in (0, C_1]$
such that
\[ \|T(u_1, \dots, u_k)\|_{E} \leq C_2 \prod_{j = 1}^k \|u_j\|_{U^{2}}.\]

\noi
Then, we have
\[ \|T(u_1, \dots, u_k)\|_{E} \leq C_2 \bigg(\ln \frac{C_1}{C_2}+ 1\bigg)^k
 \prod_{j = 1}^k \|u_j\|_{V^2}\]

\noi
for $u_j \in V^2_\textup{rc}$, $j = 1, \dots, k$.
\end{lemma}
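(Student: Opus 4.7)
I would prove the two parts via the standard Koch-Tataru toolkit, treating (i) by atomic decomposition and (ii) via a level-set decomposition of $V^2_\textup{rc}$-functions.

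For part (i), the plan is to exploit multilinearity together with the atomic definition of $U^p$. Each $u_j$ can be written as $u_j = \sum_{m_j} \ld_j^{m_j} a_j^{m_j}$, where each $a_j^{m_j}$ is a $U^p$-atom of the form $a_j^{m_j}(t) = \sum_{l}S(t)\phi_{j,l}^{m_j}\chi_{I_{j,l}^{m_j}}(t)$; by multilinearity and the triangle inequality, it suffices to bound $\|T(a_1,\dots,a_k)\|_{L^p_t L^q_x}$ by a constant uniformly for atoms. For such a $k$-tuple,
\[
T(a_1, \dots, a_k)(t) = \sum_{l_1,\dots,l_k}\chi_{I_1^{l_1}\cap\cdots\cap I_k^{l_k}}(t)\, T\bigl(S(t)\phi_{1,l_1},\dots,S(t)\phi_{k,l_k}\bigr),
\]
and the sets $I_1^{l_1}\cap\cdots\cap I_k^{l_k}$ are pairwise disjoint as $(l_1,\dots,l_k)$ varies, since at each $t$ exactly one index per partition contributes. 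The $L^p_t L^q_x$-norm therefore decouples into a diagonal sum of $p$-th powers, and the hypothesized Strichartz bound, combined with the atom constraint $\sum_l\|\phi_{j,l}\|_{L^2}^p = 1$, yields $\|T(a_1,\dots,a_k)\|_{L^p_t L^q_x}^p \les \prod_j\sum_{l_j}\|\phi_{j,l_j}\|_{L^2}^p \leq 1$; passing to infima over atomic representations then gives the full bound.

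For part (ii), I would invoke a Koch-Tataru-type decomposition: for every $v \in V^2_\textup{rc}$ with $\|v\|_{V^2}\leq 1$ and every scale $M>0$, there is a splitting $v = v^\flat + v^\sharp$ with $\|v^\flat\|_{U^2} \les M$ and $\|v^\sharp\|_{U^p}\les e^{-cM}$ for each $p>2$, constructed by a greedy stopping-time procedure applied to the partitions realizing the $V^2$-norm. Applied to each normalized $u_j$ at a common scale $M$, the $k$-linearity of $T$ gives $T(u_1,\dots,u_k) = \sum_{\s\in\{\flat,\sharp\}^k} T\bigl(u_1^{\s_1},\dots,u_k^{\s_k}\bigr)$, where the all-$\flat$ term is controlled by $C_2 M^k$ via the $U^2$-hypothesis, and each remaining term has at least one $\sharp$-factor supplying an $e^{-cM}$ gain via the $U^{p_j}$-hypothesis (with norm bound $C_1$), while the $\flat$-factors contribute powers of $M$ using $U^2\hookrightarrow U^{p_j}$. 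The main obstacle, and the balancing step responsible for the logarithmic factor, is the optimization over $M$: each mixed configuration contributes at most $C_1 M^{k-1}e^{-cM}$ up to constants, which matches $C_2 M^k$ precisely when $M\sim \ln(C_1/C_2)$, yielding the total bound $\les C_2\bigl(\ln(C_1/C_2)+1\bigr)^k \prod_j\|u_j\|_{V^2}$. A standard density argument on step functions then extends the estimate from the dense subclass where $T$ is canonically defined to all of $V^2_\textup{rc}$.
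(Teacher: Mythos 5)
Your proposal is correct and reproduces the standard arguments from Hadac--Herr--Koch and Herr--Tataru--Tzvetkov, which is precisely what the paper does: it gives no proof of this lemma itself but defers to \cite[Propositions 2.19 and 2.20]{HHK} and the trilinear interpolation in \cite{HTT1}, whose proofs are the atomic decoupling for (i) and the $U^2$/$U^q$ splitting of $V^2_{\textup{rc}}$ at scale $M\sim\ln(C_1/C_2)$ for (ii) that you describe. The only cosmetic point is that no density argument is needed at the end of (ii), since $V^2_{\textup{rc}}\hookrightarrow U^{p_j}$ already places the inputs in the domain of $T$.
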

	
\noi
A transference principle as above has been commonly used
in the Fourier restriction norm method.
 See \cite[Proposition 2.19]{HHK}
for the proof of  Lemma \ref{LEM:Xinterpolate} (i).
The proof of the interpolation result follows from extending the trilinear result in \cite{HTT1}
to a general $k$-linear case.
See also  \cite[Proposition 2.20]{HHK}.

\smallskip

Let $\eta: \R \to [0, 1]$ be an even, smooth cutoff function
supported on $[-\frac{8}{5}, \frac{8}{5}]$
such that $\eta \equiv 1$ on $[-\frac{5}{4}, \frac{5}{4}]$.
Given a dyadic number $N \geq 1$, we
set
$\eta_1(\xi) = \eta(|\xi|)$
and
\[\eta_N(\xi) = \eta\bigg(\frac{|\xi|}{N}\bigg) - \eta\bigg(\frac{2|\xi|}{N}\bigg)\]

\noi
for $N \geq 2$.
Then, we define the Littlewood-Paley projection operator
$\P_N$ as the Fourier multiplier operator with symbol $\eta_N$.
Moreover, we define $\P_{\leq N}$ and $\P_{\geq N}$
by $\P_{\leq N} = \sum_{1 \leq M \leq N} \P_M$
and  $\P_{\geq N} = \sum_{ M \geq N} \P_M$.

\begin{definition} \label{DEF:X3}
\rm
\textup{(i)}
Let $s\in \R$.
We define $X^s(\R)$ to be the space of all tempered distributions
$u : \R \to H^s(\R^d)$
such that $\| u\|_{X^s(\R)} < \infty$,
where the $X^s$-norm is defined by
\[ \|u \|_{X^s (\R)} : = \bigg(
\sum_{\substack{N\geq 1\\\textup{dyadic}}} N^{2s}
\| \P_N u \|_{U^2_{\Dl}L^2}^2
\bigg)^\frac{1}{2}.
\]

\smallskip
\noi
\textup{(ii)}
Let $s\in \R$.
We define $Y^s(\R)$ to be the space of all tempered distributions
$u : \R \to H^s(\R^d)$
such that
for every $N\in \mathbb{N}$, the map $t \mapsto \P_N u(t)$
is in
$V^2_{\textup{rc}, \Dl} H^s$
and
$\| u\|_{Y^s(\R)} < \infty$,
where the $Y^s$-norm is defined by
\[ \|u \|_{Y^s(\R)} : = \bigg( \sum_{\substack{N\geq 1\\\textup{dyadic}}} N^{2s}
\| \P_N u\|_{V^2_\Dl L^2}^2\bigg)^\frac{1}{2}.
\]

\end{definition}

\noi
Recall the following embeddings:
\begin{equation}\label{inclusions}
U^2_\Dl H^s \hookrightarrow X^s \hookrightarrow
Y^s \hookrightarrow V^2_\Dl H^s \hookrightarrow U^p_\Dl H^s,
\end{equation}
for $p>2$.

Given an  interval $I \subset \R$,
we define the local-in-time versions $X^s(I)$
and $Y^s(I)$ of these spaces
as restriction norms.
For example, we define the $X^s(I)$-norm by
\[ \|u \|_{X^s(I)} = \inf\big\{ \|v\|_{X^s(\R)}: \, v|_I = u\big\}.\]

\noi
We also define the norm for the nonhomogeneous term:
\begin{align}
\| F\|_{N^s(I)} = \bigg\|\int_{t_0}^t S(t - t') F(t') dt'\bigg\|_{X^s(I)}.
\end{align}

\noi
In the following,  we will perform our analysis in
$X^s(I) \cap C(I; H^s)$, that is, in
a Banach subspace
of continuous functions in $X^s(I)$.
See Appendix \ref{SEC:A} for additional  properties of the $X^s(I)$-spaces.

We conclude this section by presenting
some basic
estimates involving these function spaces.	

\begin{lemma}\label{LEM:Ys1}
\textup{(i) (Linear estimates)}
Let $s \geq 0$ and $0 < T \leq \infty$.
Then, we have
\begin{align*}
\|   S(t) \phi \|_{X^s([0, T))}
& \leq \|\phi\|_{H^s}, \\
\|F\|_{N^s([0, T))}
 & \leq \sup_{\substack{v \in Y^{-s}([0, T))\\\|v\|_{Y^{-s} }= 1}}
 \bigg| \int_0^T \int_{\R^d} F(t, x) \cj{v(t, x)} dx dt\bigg|
\end{align*}

\noi
for all $\phi \in H^s(\R^d)$ and
$F \in L^1([0, T); H^s(\R^d))$.

\smallskip

\noi
\textup{(ii) (Strichartz estimates)}
Let $(q, r)$ be Schr\"odinger admissible
with $ q > 2$
and $p \geq \frac{2(d+2)}{d}$.
Then, for $0< T\leq \infty$ and  $N_1 \leq N_2$, we have
\begin{align}
\|  u \|_{L^q_t L^r_x([0, T)\times \R^d)} & \les \|u\|_{Y^0([0, T))}, \label{Ys2}\\
\|  u \|_{L^p_{t,x}([0, T)\times \R^d)}
& \les  \big\||\nb|^{\frac d2 - \frac {d+2}p} u\big\|_{Y^0([0, T))},
\label{Ys3}\\
\| \P_{N_1} u_1 \P_{N_2}u_2\|_{L^2_{t, x}([0, T)\times \R^d)}
& \les N_1^\frac{d-2}{2} \bigg(\frac{N_1}{N_2}\bigg)^{\frac 12-}
\|\P_{N_1} u_1\|_{Y^0([0, T))}\|\P_{N_2} u_2\|_{Y^0([0, T))}.
\label{Ys4}
\end{align}

\end{lemma}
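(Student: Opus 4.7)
\medskip

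\noindent\textbf{Proof proposal.} The overall strategy is the standard Koch--Tataru transference-plus-interpolation machinery: prove the desired estimates first for free solutions $S(t)\phi$ using classical deterministic results (Strichartz, Sobolev--Strichartz, Bourgain's bilinear Strichartz), then lift to $U^2_\Delta L^2$ by the transference principle of Lemma \ref{LEM:Xinterpolate}(i), and finally pass to $V^2_\Delta L^2$ (hence to $Y^0$) either by the trivial embedding $V^2 \hookrightarrow U^p$ (Remark \ref{REM:UpVp}) when $q>2$, or, for the bilinear estimate, by the log-gaining interpolation of Lemma \ref{LEM:Xinterpolate}(ii). The linear estimates in (i) are of a different flavor, following from the very definitions of $U^2$ and from duality.

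For the homogeneous estimate in (i), note that for any dyadic $N$ the map $t\mapsto S(-t)\bigl(\P_N S(t)\phi\bigr)=\P_N\phi$ is constant, so writing $\P_N\phi = \|\P_N\phi\|_{L^2}\cdot a$ with $a$ the one-atom $\mathbf{1}_{[-\infty,\infty)}\cdot \|\P_N\phi\|_{L^2}^{-1}\P_N\phi$ gives $\|\P_N S(t)\phi\|_{U^2_\Delta L^2}=\|\P_N\phi\|_{L^2}$; squaring and summing with the weight $N^{2s}$ yields the bound. For the Duhamel estimate, one uses the standard $(U^2,V^2)$-duality: the pairing $\langle \int_{t_0}^t S(t-t')F(t')dt',\, v\rangle_{X^s,Y^{-s}}$ reduces, after shifting $S(t)$, to $\int F(t)\,\overline{v(t)}\,dx\,dt$ by Plancherel and the frequency-localized duality $\|\P_N u\|_{U^2_\Delta L^2}\sim\sup_{\|\P_N v\|_{V^2_\Delta L^2}\le 1}\langle\P_N u,\P_N v\rangle$; squaring over $N$ and using Cauchy--Schwarz in $N$ gives the desired bound.

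For (ii), the first two estimates go as follows. The classical Strichartz estimate \eqref{Str0} combined with the transference principle (Lemma \ref{LEM:Xinterpolate}(i)) gives $\|u\|_{L^q_tL^r_x}\lesssim \|u\|_{U^q_\Delta L^2}$. Since $q>2$, the embedding $V^2_\Delta L^2\hookrightarrow U^q_\Delta L^2$ from Remark \ref{REM:UpVp} yields $\|\P_N u\|_{L^q_tL^r_x}\lesssim \|\P_N u\|_{V^2_\Delta L^2}$, and a Littlewood--Paley square function estimate (valid since $q,r\ge 2$) followed by Minkowski's inequality gives \eqref{Ys2} with the $\ell^2_N$ norm on the right. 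The Sobolev--Strichartz bound \eqref{Ys3} is obtained analogously starting from the frequency-localized estimate \eqref{Str0b}: transference gives $\|\P_N u\|_{L^p_{t,x}}\lesssim N^{d/2-(d+2)/p}\|\P_N u\|_{U^p_\Delta L^2}$, and since $p\ge 2(d+2)/d>2$ the same $V^2\hookrightarrow U^p$ embedding and a Littlewood--Paley square function argument (together with the derivative shift $|\nabla|^{d/2-(d+2)/p}\sim N^{d/2-(d+2)/p}$ at frequency $N$) produce \eqref{Ys3}.

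The bilinear estimate \eqref{Ys4} is the main obstacle and requires more care. The starting point is Bourgain's deterministic bilinear Strichartz estimate for free solutions with separated dyadic frequency supports $N_1\le N_2$:
\begin{equation*}
\bigl\|\P_{N_1}S(t)\phi_1\cdot\P_{N_2}S(t)\phi_2\bigr\|_{L^2_{t,x}(\R\times\R^d)}\lesssim N_1^{\frac{d-2}{2}}\Bigl(\frac{N_1}{N_2}\Bigr)^{\frac12}\|\P_{N_1}\phi_1\|_{L^2}\|\P_{N_2}\phi_2\|_{L^2}.
\end{equation*}
Applying the bilinear version of the transference principle upgrades this to the corresponding estimate for $u_j\in U^2_\Delta L^2$ with the same sharp exponent $1/2$. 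One then seeks to replace $U^2_\Delta L^2$ by $V^2_\Delta L^2$ on the right. The naive $V^2\hookrightarrow U^p$ embedding ($p>2$) combined with the trivial bound $\|\P_{N_1}u_1\P_{N_2}u_2\|_{L^2}\lesssim N_1^{d/2}\|\P_{N_1}u_1\|_{U^\infty L^2}\|\P_{N_2}u_2\|_{U^\infty L^2}$ (from H\"older) gives a bilinear estimate in $U^p\times U^p$ with a degraded power of $N_1/N_2$, and the $k$-linear interpolation result Lemma \ref{LEM:Xinterpolate}(ii) then lifts this to $V^2\times V^2$ at the cost of a logarithmic factor $\log(N_2/N_1)$, which is absorbed into $(N_1/N_2)^{1/2-}$. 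This last step---balancing the sharp bilinear bound at $U^2$ against a weaker bilinear bound at a larger exponent and invoking the logarithmic interpolation---is the technical heart of the argument, and the epsilon loss in the exponent in \eqref{Ys4} is exactly the price one pays for passing from $U^2$ to $V^2$.
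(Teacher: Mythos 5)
Your treatment of part (i) and of \eqref{Ys2}, \eqref{Ys3} matches the paper's (the paper simply cites \cite{HHK, HTT1} for (i) and derives \eqref{Ys2}--\eqref{Ys3} from \eqref{Str0}, \eqref{Str0a} via transference and the embedding $Y^0\hookrightarrow V^2_\Dl L^2\hookrightarrow U^q_\Dl L^2$ for $q>2$; your Littlewood--Paley/Minkowski variant is an equally valid way to land on the $Y^0$-norm). Your overall scheme for \eqref{Ys4} --- sharp bilinear estimate at $U^2$ by transference, a weaker bilinear estimate at a larger exponent, then Lemma \ref{LEM:Xinterpolate}(ii) to pass to $V^2\times V^2$ with a logarithm absorbed into the $\eps$-loss --- is also exactly the paper's.

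However, the specific second input you feed into the interpolation lemma does not work. You propose the ``trivial bound'' $\| \P_{N_1}u_1\,\P_{N_2}u_2\|_{L^2_{t,x}}\lesssim N_1^{d/2}\|\P_{N_1}u_1\|_{U^\infty L^2}\|\P_{N_2}u_2\|_{U^\infty L^2}$ from H\"older. First, the paper defines $U^p$ only for $1\le p<\infty$, and Lemma \ref{LEM:Xinterpolate}(ii) requires finite exponents $p_j>2$. More seriously, even reading $U^\infty L^2$ as $L^\infty_t L^2_x$ (which controls $U^q$ for every $q$ by Remark \ref{REM:UpVp}), the claimed inequality is false on an unbounded time interval: H\"older and Bernstein give a pointwise-in-$t$ bound $\|\P_{N_1}u_1\P_{N_2}u_2\|_{L^2_x}\lesssim N_1^{d/2}\|\P_{N_1}u_1\|_{L^2_x}\|\P_{N_2}u_2\|_{L^2_x}$, but taking $L^2_t(\R)$ of a product of two functions that are merely bounded in time yields nothing (and on $[0,T)$ it costs a factor $T^{1/2}$, which is unacceptable since the lemma allows $T=\infty$ and is applied globally in Proposition \ref{PROP:NL1}(ii)). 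The correct second input, as in the paper, is a $U^4_\Dl L^2\times U^4_\Dl L^2$ bound: by Bernstein and the Schr\"odinger-admissible pair $(4,\tfrac{2d}{d-1})$ one has $\|\P_{N_j}S(t)\phi_j\|_{L^4_{t,x}}\lesssim N_j^{(d-2)/4}\|\P_{N_j}\phi_j\|_{L^2}$, whence by $L^4_{t,x}\times L^4_{t,x}$ H\"older and transference
\begin{equation*}
\|\P_{N_1}u_1\P_{N_2}u_2\|_{L^2_{t,x}}\lesssim N_1^{\frac{d-2}{4}}N_2^{\frac{d-2}{4}}\|\P_{N_1}u_1\|_{U^4_\Dl L^2}\|\P_{N_2}u_2\|_{U^4_\Dl L^2}.
\end{equation*}
With $C_2=N_1^{(d-2)/2}(N_1/N_2)^{1/2}$ and $C_1=N_1^{(d-2)/4}N_2^{(d-2)/4}$ one has $C_1/C_2=(N_2/N_1)^{\frac{d-2}{4}+\frac12}$, and Lemma \ref{LEM:Xinterpolate}(ii) produces the factor $(\ln(N_2/N_1)+1)^2$, which is absorbed as you indicate. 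With this replacement your argument goes through.
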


\noi
Note that there is a slight loss of regularity in \eqref{Ys4}
since we use the $Y^0$-norm
on the right-hand side instead of the $X^0$-norm.
In view of \eqref{inclusions},
we may replace the $Y^0$-norms on the right-hand sides
of \eqref{Ys2}, \eqref{Ys3}, and \eqref{Ys4}
by the $X^0$-norm in the following.

\begin{proof}
In the following, we briefly discuss the proof of (ii).
See \cite{HHK, HTT1} for the proof of (i).
The first estimate \eqref{Ys2} follows from the Strichartz estimate \eqref{Str0},
Lemma \ref{LEM:Xinterpolate} (i),
and \eqref{inclusions}:
\[\|u\|_{L^q_tL^r_x}\lesssim \|u\|_{U^q_{\Dl}L^2}\lesssim \|u\|_{Y^0},\]

\noi
for $q>2$.
The second estimate
\eqref{Ys3} follows  from \eqref{Str0a} in a similar manner.
It remains to prove \eqref{Ys4}.
On the one hand,
the following bilinear refinement of the Strichartz estimate
by Bourgain \cite{Bo98} and Ozawa-Tsutsumi \cite{OT}:
\[\|\P_{N_1}S(t) \phi_1 \P_{N_2}S(t) \phi_2\|_{L^2_{t,x}}
\lesssim  N_1^{\frac{d-2}{2}}\bigg(\frac{N_1}{N_2}\bigg)^{\frac 12}
\|\P_{N_1}\phi_1\|_{L^2}\|\P_{N_2}\phi_2\|_{L^2}\]

\noi
and Lemma \ref{LEM:Xinterpolate} (i)
yield
\begin{equation}\label{biU2}
\|\P_{N_1}u_1\P_{N_2}u_2\|_{L^2_{t,x}}\lesssim  N_1^{\frac{d-2}{2}}
\bigg(\frac{N_1}{N_2}\bigg)^{\frac 12}\|\P_{N_1}u_1\|_{U^2_{\Dl}L^2}\|\P_{N_2}u_2\|_{U^2_{\Dl}L^2}.
\end{equation}

\noi
On the other hand,
by
Bernstein's inequality
and noting that
 $(4, \frac{2d}{d-1})$ is Strichartz admissible, we have
\begin{align*}
\|\P_{N_j}S(t) \phi_j\|_{L^4_{t,x}}
&\lesssim
N_j^{\frac{d-2}{4}}
\|\P_{N_j}S(t) \phi_j\|_{L^4_tL^{\frac{2d}{d-1}}_x}
\lesssim N_j^{\frac{d-2}{4}} \|\P_{N_j}\phi_j\|_{L^2}.
\end{align*}

\noi
Then, by Cauchy-Schwarz inequality
and
Lemma \ref{LEM:Xinterpolate} (i), we obtain
\begin{equation}\label{biU4}
\|\P_{N_1}u_1\P_{N_2}u_2\|_{L^2_{t,x}}\lesssim  N_1^{\frac{d-2}{4}}N_2^{\frac{d-2}{4}}\|\P_{N_1}u_1\|_{U^4_{\Dl}L^2}\|\P_{N_2}u_2\|_{U^4_{\Dl}L^2}.
\end{equation}

\noi
Hence, by Lemma \ref{LEM:Xinterpolate} (ii), with
\eqref{biU2} and  \eqref{biU4}, we have
\begin{align}
\|\P_{N_1}u_1 \P_{N_2}u_2\|_{L^2_{t,x}}
\lesssim  N_1^{\frac{d-2}{2}}
\bigg(\frac{N_1}{N_2}\bigg)^{\frac 12}
\bigg(\ln \Big(\frac{N_2}{N_1}\Big)+1\bigg)^2
\|\P_{N_1}u_1\|_{V^2_{\Dl}L^2}\|\P_{N_2}u_2\|_{V^2_{\Dl}L^2}.
\label{biU5}
\end{align}

\noi
Finally, \eqref{Ys4} follows from \eqref{inclusions} and \eqref{biU5}.
\end{proof}

Similarly to the usual Strichartz estimate  \eqref{Str0a},
 the derivative loss in \eqref{Ys3} depends
only on the size of the spatial frequency support and not its location.
Namely,  if the spatial frequency support of $\ft{u}(t, \xi)$
is contained in a cube of side length $N$ for all $t \in \R$,
then we have
\begin{equation}
\|  u \|_{L^p_{t,x}([0, T)\times \R^d)}
 \les  N^{\frac d2 - \frac{d+2}{p}}
\| u\|_{Y^0([0, T))}.
\label{Ys5}
\end{equation}

\noi
This is a direct consequence of    \eqref{Str0b}.

Lastly, we  recall Schur's test for readers' convenience.

\begin{lemma}[Schur's test]\label{LEM:Schur}
Suppose that we have
\[ \sup_m \sum_{n} |K_{m, n}| + \sup_n \sum_{m} |K_{m, n}| < \infty\]

\noi
for some $K_{m, n} \in \C$, $m, n \in \Z$.
Then, we have
\[\sum_{m, n } K_{m, n} a_m  b_n \les \|a_m\|_{\l^2_m}\|b_n\|_{\l^2_n}.\]
for any $\ell^2$-sequences $\{a_m\}_{m\in\Z}$ and  $\{b_n\}_{n\in\Z}$.
\end{lemma}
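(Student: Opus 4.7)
The plan is to prove Schur's test by the standard Cauchy--Schwarz splitting trick. Let $A := \sup_m \sum_n |K_{m,n}|$ and $B := \sup_n \sum_m |K_{m,n}|$; by hypothesis $A, B < \infty$. I would first reduce to nonnegative coefficients by replacing $a_m, b_n, K_{m,n}$ with their absolute values, which only makes the left-hand side larger and leaves the right-hand side unchanged.

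The main step is to symmetrize the kernel before applying Cauchy--Schwarz. Writing $|K_{m,n}| = |K_{m,n}|^{1/2} \cdot |K_{m,n}|^{1/2}$ and pairing one factor with $|a_m|$ and the other with $|b_n|$, Cauchy--Schwarz in the double sum gives
\begin{equation*}
\sum_{m,n} |K_{m,n}| |a_m| |b_n|
\leq \biggl(\sum_{m,n} |K_{m,n}| |a_m|^2 \biggr)^{1/2}
\biggl(\sum_{m,n} |K_{m,n}| |b_n|^2 \biggr)^{1/2}.
\end{equation*}
In the first factor, I would exchange the order of summation and bound the inner sum in $n$ by $A$, yielding $A \|a_m\|_{\ell^2_m}^2$; symmetrically, the second factor is bounded by $B \|b_n\|_{\ell^2_n}^2$. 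Combining these gives the estimate with implicit constant $\sqrt{AB}$, which is finite by hypothesis.

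The proof is entirely elementary and presents no real obstacle; the only minor subtlety is justifying the interchange of summations, but since everything in sight is nonnegative after passing to absolute values, Tonelli/Fubini for counting measure applies without issue. There is nothing here that requires any of the function-space machinery developed earlier in the paper, so the proof should be a few lines.
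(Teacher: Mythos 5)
Your proof is correct: the symmetric splitting $|K_{m,n}|=|K_{m,n}|^{1/2}\cdot|K_{m,n}|^{1/2}$ followed by Cauchy--Schwarz and Tonelli yields the bound with constant $\sqrt{AB}$, which is the canonical argument for Schur's test. The paper merely recalls this lemma without proof, so there is nothing to compare against; your few-line argument is exactly the standard one and is complete.
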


\section{Probabilistic nonlinear estimates}

\label{SEC:4}

In this section, we prove the key nonlinear estimates
in the critical regularity $s_\textup{crit} = \frac{d-2}{2}$.
In the next section, we use them to prove Theorems \ref{THM:1} and \ref{THM:2}.
Given $z(t) = S(t) \phi^\o$, define $\G$ by
\begin{equation}
\G v(t) =\mp i  \int_0^t S(t-t') \N (v+z)(t') dt',
\label{NLS5}
\end{equation}
where $\N(v+z)=|v+z|^2(v+z)$.
\noi
Then, we have the following nonlinear estimates.

\begin{proposition}\label{PROP:NL1}
Given $d\geq 3$, let $s \in (s_d, s_\textup{crit}]$, where $s_d$ is defined in \eqref{Sd1}. Given $\phi \in H^s(\R^d)$,
let $\phi^\o$ be its Wiener randomization defined in \eqref{R1},
satisfying \eqref{R2}.

\smallskip

\noi
\textup{(i)}
Let
 $0<T \leq 1$. Then,  there exists $0<\theta\ll1$ such that we have
\begin{align}
\|\G v\|_{X^\frac{d-2}{2}([0, T))}
& \leq C_1
\big(\|v\|_{X^\frac{d-2}{2}([0, T))} ^3 + T^\theta R^3\big),  \label{nl1a}\\
\|\G v_1 - \G v_2  \|_{X^\frac{d-2}{2}([0, T))}
& \leq C_2
\Big(\sum_{j = 1}^2 \|v_j\|_{X^\frac{d-2}{2}([0, T))} ^2
+ T^\theta R^2\Big)
\|v_1 -v_2 \|_{X^\frac{d-2}{2}([0, T))},
\label{nl1b}
\end{align}

\noi
for all $v, v_1, v_2 \in X^{\frac{d-2}{2}}([0, T))$
and  $R>0$,
outside a set of probability $\leq  C \exp\big(-c \frac{R^2}{\|\phi\|_{H^s}^2}\big)$.

\noi
\textup{(ii)}
Given $0 < \eps \ll1$, define $\wt \G$ by
\begin{equation}
\wt \G v(t) =  \mp i \int_0^t S(t-t') \N (v+\eps z)(t') dt'.
\label{NLS6}
\end{equation}

\noi
Then,  we have
\begin{align}
\|\wt \G v\|_{X^\frac{d-2}{2}(\R )}
& \leq C_3
\big(\|v\|_{X^\frac{d-2}{2}(\R)} ^3 + R^3\big),  \label{nl1c}\\
\|\wt \G v_1 - \wt \G v_2  \|_{X^\frac{d-2}{2}(\R)}
& \leq C_4
\Big(\sum_{j = 1}^2 \|v_j\|_{X^\frac{d-2}{2}(\R)} ^2
+ R^2\Big)
\|v_1 -v_2 \|_{X^\frac{d-2}{2}(\R)},
\label{nl1d}
\end{align}

\noi
for all $v, v_1, v_2 \in X^{\frac{d-2}{2}}(\R)$
and  $R>0$,
outside a set of probability $\leq C \exp\big(-c \frac{R^2}{\eps ^2 \|\phi\|_{H^s}^2}\big)$.

\end{proposition}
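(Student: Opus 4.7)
The plan is to apply the duality characterization of the $N^{s_\text{crit}}$-norm from Lemma \ref{LEM:Ys1}(i), reducing the proof of \eqref{nl1a} to bounding the quadrilinear form
\[
\mathcal{I}(u_1, u_2, u_3, w) := \int_0^T\!\!\int_{\R^d} u_1 \bar{u}_2 u_3 \bar w \, dx\, dt
\]
over test functions $w$ with $\|w\|_{Y^{-s_\text{crit}}([0,T))} = 1$, where each $u_j$ is either $v$ or $z = S(t)\phi^\omega$. Grouping the eight terms arising from $\N(v+z) = |v+z|^2(v+z)$ according to the number $k \in \{0, 1, 2, 3\}$ of $z$-factors, I would treat the four cases separately.

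\textbf{Deterministic term ($k=0$).} This is the Cazenave-Weissler-type trilinear estimate in the critical $U^2/V^2$ framework, handled by the now-standard Herr-Tataru-Tzvetkov argument \cite{HHK, HTT1} transposed to $\R^d$. I would perform a dyadic decomposition $u_j = \sum_{N_j} P_{N_j} v$, $w = \sum_N P_N w$, note that the frequency support forces $N \lesssim \max(N_1, N_2, N_3)$, and apply the bilinear refinement \eqref{Ys4} to the pair of factors at the smallest and largest dyadic scales. The gain $(N_{\min}/N_{\max})^{1/2-}$, combined with Cauchy-Schwarz in $t,x$ and Schur's test (Lemma \ref{LEM:Schur}), yields the bound by $\|v\|^3_{X^{s_\text{crit}}}$.

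\textbf{Probabilistic terms ($k\geq 1$).} The starting point is a dyadic version of Lemma \ref{PROP:Str1}: applying that lemma to each Littlewood-Paley piece $P_N \phi^\omega$ and summing the exceptional probabilities, one obtains, outside a set of probability $\lesssim \exp(-cR^2/\|\phi\|_{H^s}^2)$, the bound
\[
\|P_N z\|_{L^q_t L^r_x([0,T)\times \R^d)} \lesssim T^\theta\, N^{-s}\, c_N\, R, \qquad \|c_N\|_{\ell^2_N} \lesssim 1,
\]
for any prescribed finite $(q,r)$. For each $\mathcal{I}$ with $k \geq 1$ random inputs, I would place the $z$-factors in suitably chosen $L^q_t L^r_x$ spaces via Hölder, and pair the remaining $v$-factors and the test function $w$ using either the bilinear refinement \eqref{Ys4} (two deterministic factors), the Strichartz estimate \eqref{Ys3} (a single deterministic factor), or straight Hölder. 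Summing the resulting four-index dyadic product via Schur's test produces the $T^\theta R^3$ contribution. The threshold $s > s_d = \frac{d-1}{d+1}\,s_\text{crit}$ enters precisely as the condition ensuring that the regularity deficit $s_\text{crit} - s$ incurred by each $z$-factor is strictly overcome by the bilinear gain $(N_{\min}/N_{\max})^{1/2-}$, so that the dyadic sum converges absolutely.

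\textbf{Difference estimate, global version, and main obstacle.} The Lipschitz bound \eqref{nl1b} is multilinear in $v_1-v_2$ and follows by the same case analysis after writing $\N(v_1+z)-\N(v_2+z)$ as a sum of trilinear terms each containing a factor of $v_1-v_2$. Part (ii) is proved by an identical scheme with Lemma \ref{PROP:Str1} replaced by its global-in-time analogue Lemma \ref{PROP:Str2}; the absence of the $T^\theta$ gain is compensated by the smallness of $\varepsilon$ built into $\widetilde \G$. The real technical crux lies in the mixed cases $k=1,2$: one must carefully distribute Hölder exponents between deterministic and random factors, choose which pairs to bound bilinearly, and verify that the four-fold dyadic summation is finite. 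Balancing the $N^{-s}$ deficit on the $z$-factors against the bilinear gain, right at the threshold $s=s_d$, is the principal source of delicacy.
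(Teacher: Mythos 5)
Your plan coincides with the paper's proof in all essentials: duality via Lemma \ref{LEM:Ys1}(i), a case-by-case analysis according to the number of random factors, probabilistic Strichartz bounds on the $z$-factors (with an $\eps$ of regularity sacrificed to sum the dyadic exceptional sets) combined with the bilinear refinement \eqref{Ys4} on deterministic/test-function pairs, and the threshold $s>s_d$ emerging from the frequency balancing in the all-$z$ case with $N_3\sim N_4\gg N_1,N_2$. The only cosmetic differences are that the paper treats the purely deterministic $vvv$ term by a single H\"older application with \eqref{Ys3} (no dyadic decomposition or bilinear estimate is needed there), and it first truncates the nonlinearity by $\P_{\leq N}$ to justify invoking the duality lemma before passing to the limit $N\to\infty$.
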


\begin{proof}
(i)
Let $0<T \leq 1$.
We only prove \eqref{nl1a} since  \eqref{nl1b} follows in a  similar manner.
Given $N \geq 1$, define $\G_N$ by
\begin{equation}
\G_N v(t) =\mp i  \int_0^t S(t-t')\P_{\leq N} \N (v+z)(t') dt'.
\label{NNLS1}
\end{equation}

\noi
By Bernstein's and H\"older's inequalities, we have
\begin{align}
\|\P_{\leq N} \N (v+z)\|_{L^1_t ([0, T); H^{\frac{d-2}{2}}_x)}
& \les N^\frac{d-2}{2}
\| \N (v+z)\|_{L^1_t L^2_x}\notag \\
& \les N^\frac{d-2}{2}
\| v\|_{L^3_t([0, T);  L^6_x)}^3
+ N^\frac{d-2}{2}
\| z\|_{L^3_t([0, T);  L^6_x)}^3.
\label{NNLS2}
\end{align}

\noi
On the one hand,
it follows from Lemma \ref{PROP:Str1}
that the second term on the right-hand side of \eqref{NNLS2}
is finite almost surely.
On the other hand,
noting that $(3, \frac{6d}{3d-4})$ is Strichartz admissible,
it follows from   Sobolev's inequality and
\eqref{Ys2} in Lemma \ref{LEM:Ys1} that
\begin{align}
\| v\|_{L^3_t([0, T);  L^6_x)}
\les \big\| \jb{\nb}^\frac{d-2}{3}v\big\|_{L^3_t([0, T);  L^\frac{6d}{3d-4}_x)}
\les \big\| v \|_{X^\frac{d-2}{3}([0, T))} < \infty.
\label{NNLS3}
\end{align}

\noi
Therefore,
by Lemma \ref{LEM:Ys1} (i),  we have
\begin{align}
\|\G_N v(t)\|_{X^\frac{d-2}{2}}
  \lesssim \sup_{\substack{v_4 \in Y^{0}([0, T))\\\|v_4\|_{Y^{0}} = 1}}
 \bigg| \int_0^T \int_{\R^d} \jb{\nb}^\frac{d-2}{2} \N(v+z)(t, x) \cj{v_4(t, x)} dx dt\bigg|,
\label{nl1}
\end{align}

\noi
almost surely, where $v_4 = \P_{\leq N} v_4$.
In the following,
we estimate the right-hand side of \eqref{nl1},
independently of the cutoff size $N \geq 1$,
by performing a case-by-case analysis of expressions of the form:
\begin{align}
\bigg| \int_0^T \int_{\R^d}
\jb{\nb}^\frac{d-2}{2} ( w_1 w_2 w_3 )v_4 dx dt\bigg|
\label{nl2}
\end{align}

\noi
where $\|v_4\|_{Y^{0}([0, T))} \leq 1$
and $w_j=  v$ or $z$,  $j = 1, 2, 3$.
As a result,
by taking $N \to \infty$,
the same estimates hold for $\G v$ without any cutoff,
thus yielding \eqref{nl1a}.

Before proceeding further, let us simplify some of the notations.
In the following, we drop the complex conjugate sign.
We  also denote $X^s([0, T))$ and $Y^s([0, T))$ by $X^s$ and $Y^s$ since $T$ is fixed.
Similarly, it is understood that the time integration in $L^p_{t, x}$
is over $[0, T)$.
Lastly, in most of the cases,
we dyadically decompose
 $w_j = v_j$ or $z_j$, $j = 1, 2, 3$,
and $v_4$ such that their spatial frequency supports are $\{ |\xi_j|\sim N_j\}$
for some dyadic $N_j \geq 1$
but still denote them as $w_j= v_j \text{ or } z_j$, $j = 1, 2, 3$, and $v_4$.
Note that, if we can afford a small derivative loss in the largest frequency,
there is no difficulty in summing over the dyadic blocks $N_j$, $j = 1, \dots, 4$.

\medskip
\noi
{\bf  Case (1):} $v v v$ case.

In this case, we do not need to  perform dyadic decompositions
and we divide the frequency spaces into
$\{|\xi_1| \geq |\xi_2|, |\xi_3|\}$,
$\{|\xi_2| \geq |\xi_1|, |\xi_3|\}$,
and
$\{|\xi_3| \geq |\xi_1|, |\xi_2|\}$.
Without loss of generality, assume that $|\xi_1| \geq |\xi_2|, |\xi_3|$.
By $L^\frac{2(d+2)}{d}_{t,x}L^{d+2}_{t,x}L^{d+2}_{t,x}L^{\frac{2(d+2)}{d}}_{t,x}$-H\"older's inequality,
\eqref{Ys3} in Lemma \ref{LEM:Ys1}, and \eqref{inclusions},
we have
\begin{align*}
\bigg|\int_0^T\int_{ \R^d} \jb{\nb}^\frac{d-2}{2} v_1 v_2 v_3 v_4 dx dt \bigg|
& \leq \| \jb{\nb}^\frac{d-2}{2} v_1\|_{L^\frac{2(d+2)}{d}_{t, x}}
\|v_2\|_{L^{d+2}_{t, x}}\|v_3\|_{L^{d+2}_{t, x}}
\|v_4\|_{L^{\frac{2(d+2)}{d}}_{t, x}}\\
& \les
\prod_{j = 1}^3 \|v_j\|_{Y^\frac{d-2}{2}} \|v_4\|_{Y^0}
\les \prod_{j = 1}^3 \|v_j\|_{X^\frac{d-2}{2}}.
\end{align*}

\medskip

\noi
{\bf Case (2):} $zz z$ case. \quad

Without loss of generality, assume $N_3 \geq N_2 \geq N_1$.

\smallskip

\noi
{\bf $\bullet$  Subcase (2.a):} $N_2 \sim N_3$.

By $L^{d+2}_{t,x}L^{4}_{t,x}L^{4}_{t,x}L^{\frac{2(d+2)}{d}}_{t,x}$-H\"older's inequality,
we have
\begin{align*}
\bigg|\int_0^T\int_{ \R^d}  z_1 z_2 \jb{\nb}^\frac{d-2}{2} z_3 v_4 dx dt \bigg|
& \les \|z_1\|_{L^{d+2}_{t, x}} \|\jb{\nb}^\frac{d-2}{4}z_2\|_{L^{4}_{t, x}}
\|\jb{\nb}^\frac{d-2}{4} z_3 \|_{L^{4}_{t, x}}\| v_4 \|_{L^\frac{2(d+2)}{d}_{t, x}}.
\end{align*}

\noi
Hence,
by Lemmata \ref{PROP:Str1} and \ref{LEM:Ys1},
the contribution to \eqref{nl1} in this case is at most
$\les T^{0+} R^3$
outside a set of probability
\begin{equation*}
\leq
C\exp\bigg(- c\frac{R^2}{T^{\frac{2}{d+2}-}\|\phi\|_{L^2}^2}\bigg)
+ C\exp\bigg(- c\frac{R^2}{T^{\frac{1}{2}-}\|\phi\|_{H^{\frac{d-2}{4}+}}^2}\bigg)
\end{equation*}

\noi
as long as $s > \frac{d-2}{4}$.
Note that $s$ needs to be strictly greater than $\frac {d-2}{4}$
due to the summations over dyadic blocks.
See \cite{BOP1} for more details.
Similar comments apply in the following.

\medskip

\noi
{\bf $\bullet$  Subcase (2.b):} $N_3 \sim N_4 \gg N_1, N_2$.

\smallskip

\noi
$\circ$ \underline{Subsubcase (2.b.i):} $N_1, N_2 \ll N_3^\frac{1}{d-1}$.

For small $\al > 0$,
it follows from  Cauchy-Schwarz inequality and Lemma \ref{LEM:Ys1} that
\begin{align}
\|z_2  \jb{\nb}^\frac{d-2}{2}z_3\|_{L^2_{t, x}}
& \les N_3^\frac{d-2}{2} \|z_2\|_{L^4_{t, x} }^\al \|z_3\|_{L^4_{t, x} }^\al \|z_2  z_3\|_{L^2_{t, x}}^{1-\al}\notag\\
& \les
N_2^{\frac{d-1}{2}-s-\frac{d-1}{2}\al-}
N_3^{\frac{d-3}{2} - s+\frac{1}{2}\al +}
\prod_{j = 2}^3\big(\|\jb{\nb}^s z_j\|_{L^4_{t, x} }^\al
\|\P_{N_j} \phi^\o\|_{H^{s}}^{1-\al}\big).
\label{T1a}
\end{align}

\noi
Then,
by \eqref{T1a} and the bilinear estimate \eqref{Ys4} in Lemma \ref{LEM:Ys1},
 we have
\begin{align*}
 \bigg|\int_0^T  \int_{ \R^d}    z_1 z_2 &   \jb{\nb}^\frac{d-2}{2}  z_3 v_4 dx dt \bigg|
   \les \|z_2 \jb{\nb}^\frac{d-2}{2} z_3\|_{L^2_{t, x}} \|z_1 v_4\|_{L^2_{t, x}}\\
& \les N_1^{\frac{d-1}{2} - s-}N_2^{\frac{d-1}{2}  - s-\frac{d-1}{2}\al-}
N_3^{\frac{d-4}{2}-s +\frac{1}{2}\al+}
\\
& \hphantom{XXXXXX}
\times
\|\P_{N_1} \phi^\o\|_{H^{s}}
\prod_{j = 2}^3\big(\|\jb{\nb}^s z_j\|_{L^4_{t, x} }^\al
\|\P_{N_j} \phi^\o\|_{H^{s}}^{1-\al}\big)
\|v_4\|_{Y^0}\\
& \les N_3^{\frac{d-2}{2} - \frac{d+1}{d-1} s + }
\|\P_{N_1} \phi^\o\|_{H^{s}}
\prod_{j = 2}^3\big(\|\jb{\nb}^s z_j\|_{L^4_{t, x} }^\al
\|\P_{N_j} \phi^\o\|_{H^{s}}^{1-\al}\big)
\|v_4\|_{Y^0}.
\end{align*}

\noi
Hence,
by Lemmata \ref{PROP:Str1} and  \ref{LEM:Hs},
the contribution to \eqref{nl1} in this case is at most
$\les T^{0+} R^3$
outside a set of probability
\begin{equation*}
\leq
 C\exp\bigg(-c \frac {R^2}{  T^{\frac{1}{2}-} \|\phi\|_{H^{s}}^{2}}\bigg)
+ C\exp\bigg(-c \frac {R^2}{  \|\phi\|_{H^{s}}^{2}}\bigg)
\end{equation*}

\noi
as long as
\begin{equation}
 s> \frac{d-1}{d+1}\cdot
  \frac{d-2}{2}=s_d
\label{nl3}
 \end{equation}

\noi
and $\al < 1-  \frac{2}{d-1} s$.

\smallskip

\noi
$\circ$ \underline{Subsubcase (2.b.ii):} $N_2\ges N_3^\frac{1}{d-1} \gg N_1$.

By H\"older's inequality
and the bilinear estimate \eqref{Ys4} in Lemma \ref{LEM:Ys1},
 we have
\begin{align*}
\bigg|\int_0^T\int_{ \R^d}   z_1 z_2 & \jb{\nb}^\frac{d-2}{2} z_3 v_4 dx dt \bigg|
 \les \|z_2\|_{L^4_{t, x}}
\|\jb{\nb}^\frac{d-2}{2} z_3 \|_{L^4_{t, x}}\|z_1 v_4 \|_{L^2_{t, x}}\\
& \les
N_1^{\frac{d-1}{2}-s-} N_2^{-s}
N_3^{\frac{d-3}{2}-s+}
\|\P_{N_1}\phi^\o\|_{H^s}
\prod_{j = 2}^3 \|\jb{\nb}^s z_j\|_{L^4_{t, x}}
\|v_4\|_{Y^0}\\
& \les N_3^{\frac{d-2}{2} - \frac{d+1}{d-1} s + }
\|\P_{N_1}\phi^\o\|_{H^s}
\prod_{j = 2}^3 \|\jb{\nb}^s z_j\|_{L^4_{t, x}}
\|v_4\|_{Y^0}.
\end{align*}

\noi
Hence,
by  Lemmata \ref{PROP:Str1}
and  \ref{LEM:Hs},
the contribution to \eqref{nl1} in this case is at most
$\les T^{0+} R^3$
outside a set of probability
\begin{equation*}
\leq
C\exp\bigg(-c \frac {R^2}{  T^{\frac{1}{2}-} \|\phi\|_{H^{s}}^{2}}\bigg)
+ C\exp\bigg(-c \frac {R^2}{  \|\phi\|_{H^{s}}^{2}}\bigg)
\end{equation*}

\noi
as long as
\eqref{nl3} is satisfied.

\smallskip

\noi
$\circ$ \underline{Subsubcase (2.b.iii):} $N_1, N_2\ges N_3^\frac{1}{d-1} $.

By $L^{\frac{6(d+2)}{d+4}}_{t,x}L^{\frac{6(d+2)}{d+4}}_{t,x}L^{\frac{6(d+2)}{d+4}}_{t,x}L^{\frac{2(d+2)}{d}}_{t,x}$-H\"older's inequality
and  \eqref{Ys3} in
Lemma \ref{LEM:Ys1},
  we have
\begin{align*}
\bigg|\int_0^T \int_{\R^d}  z_1 z_2 \jb{\nb}^\frac{d-2}{2} z_3 v_4 dx dt \bigg|
& \les N_3^{\frac{d-2}{2} -  \frac{d+1}{d-1} s }
\prod_{j =1}^3 \| \jb{\nb}^s z_j\|_{L^\frac{6(d+2)}{d+4}_{t, x}}
\|v_4\|_{Y^0}.
\end{align*}

\noi
Hence,
by Lemma \ref{PROP:Str1},
the contribution to \eqref{nl1} in this case is at most
$\les T^{0+} R^3$
outside a set of probability
\begin{equation*}
\leq  C\exp\bigg(-c \frac {R^2}{  T^{\frac{d+4}{3(d+2)}-} \|\phi\|_{H^{s}}^{2}}\bigg)
\end{equation*}

\noi
as long as
\eqref{nl3} is satisfied.

\medskip

\noi
{\bf Case (3):} $v v z$ case.

Without loss of generality,   assume $N_1 \geq N_2$.

\medskip

\noi
{\bf $\bullet$  Subcase (3.a):} $N_1 \ges N_3$.

In the following, we apply dyadic decompositions only to $v_1$,  $v_2$,
and $z_3$.
In this case, we have $N_1 \sim \max (N_2, N_3, |\xi_4|)$,
where $\xi_4$ is the spatial frequency of $v_4$.
Then, by   H\"older's inequality,
\eqref{Ys4}, and \eqref{Ys3},
 we have
\begin{align*}
\bigg|
\int_0^T \int_{\R^d} \jb{\nb}^\frac{d-2}{2}  v_1   v_2 z_3 v_4 dx dt \bigg|
& \les
\sum_{N_1 \ges N_2, N_3}
  \|\jb{\nabla}^{\frac{d-2}{2}}\P_{N_1}v_1 \P_{N_2} v_2\|_{L^2_{t,x}}\|\P_{N_3}z_3\|_{L^{d+2}_{t,x}}
  \|v_4\|_{L^{\frac{2(d+2)}{d}}_{t,x}}\\
&\les
\sum_{N_1 \geq N_2}
\bigg(\frac{N_2}{N_1}\bigg)^{\frac{1}{2}-}\prod_{j = 1}^2 \|\P_{N_j}v_j\|_{X^\frac{d-2}{2}}
\sum_{N_3}  \|\P_{N_3}z_3\|_{L^{d+2}_{t, x}}
\|v_4\|_{Y^{0}}
\intertext{By Lemma \ref{LEM:Schur}
and summing over $N_3$ with a slight loss of derivative, }
&\les
\prod_{j = 1}^2 \|v_j\|_{X^\frac{d-2}{2}}
 \| \jb{\nb}^{0+}z_3\|_{L^{d+2}_{t, x}}
\|v_4\|_{Y^{0}}.
\end{align*}

\noi
Hence,
by Lemma \ref{PROP:Str1},
the contribution to \eqref{nl1} in this case is at most
$
\les T^{0+}R \prod_{j = 1}^2 \|v_j\|_{X^\frac{d-2}{2}} $
outside a set of probability
\begin{equation*}
\leq  C\exp\bigg(-c \frac {R^2}{  T^{\frac{2}{d+2}-} \|\phi\|_{H^{0+}}^{2}}\bigg)
\end{equation*}

\noi
as long as $s > 0$.

\medskip

\noi
{\bf $\bullet$  Subcase (3.b):} $N_3\sim N_4 \gg N_1 \geq N_2$.

\smallskip

\noi
$\circ$ \underline{Subsubcase (3.b.i):} $N_1\ges  N_3^\frac 1{d-1}$.

By H\"older's inequality followed by  \eqref{Ys3} and \eqref{Ys4} in Lemma \ref{LEM:Ys1},
we have
\begin{align*}
\bigg|\int_0^T \int_{\R^d}  v_1  v_2  & \jb{\nb}^\frac{d-2}{2} z_3  v_4 dx dt \bigg|
  \les \|v_1\|_{L^\frac{2(d+2)}{d}_{t, x}}
\|\jb{\nb}^\frac{d-2}{2} z_3 \|_{L^{d+2}_{t, x}}\|v_2 v_4 \|_{L^2_{t, x}}\\
& \les
N_1^{-\frac{d-2}{2}}N_2^{\frac 12-}N_3^{\frac{d-3}{2}-s+}
 \|v_1\|_{X^\frac{d-2}{2}} \|v_2\|_{X^{\frac {d-2}2}} \|\jb{\nb}^{s} z_3\|_{L^{d+2}_{t, x}}
 \|v_4\|_{Y^0}\\
 & \les
N_3^{\frac{d-3}{d-1}\frac{d-2}{2} - s + }
 \|v_1\|_{X^\frac{d-2}{2}} \|v_2\|_{X^{\frac {d-2}2}} \|\jb{\nb}^{s} z_3\|_{L^{d+2}_{t, x}}
 \|v_4\|_{Y^0}.
\end{align*}

\noi
Hence,
by  Lemma \ref{PROP:Str1},
the contribution to \eqref{nl1} in this case is at most
$
\les T^{0+} R \prod_{j = 1}^2 \|v_j\|_{X^{\frac {d-2}2}} $
outside a set of probability
\begin{equation*}
\leq  C\exp\bigg(-c \frac {R^2}{  T^{\frac{2}{d+2}-} \|\phi\|_{H^{s}}^{2}}\bigg)
\end{equation*}

\noi
as long as
\begin{align}
s  > \frac{d-3}{d-1}
\cdot\frac{d-2}{2}.
\label{nl4}
\end{align}

\noi
Note that the condition \eqref{nl4}
is less restrictive than \eqref{nl3}.

\smallskip

\noi
$\circ$ \underline{Subsubcase (3.b.ii):} $N_2 \leq N_1\ll  N_3^\frac 1{d-1}$.

For small $\al > 0$,
it follows from
H\"older's inequality and Lemma \ref{LEM:Ys1} that
\begin{align}
\|v_1  \jb{\nb}^\frac{d-2}{2}z_3\|_{L^2_{t, x}}
& \les N_3^\frac{d-2}{2} \|v_1\|_{L^{\frac{2(d+2)}{d}}_{t, x} }^\al \|z_3\|_{L^{d+2}_{t, x} }^\al
\|v_1  z_3\|_{L^2_{t, x}}^{1-\al}\notag\\
& \les
N_1^{\frac 12 -\frac {d-1}2 \al-}
N_3^{\frac{d-3}{2} - s+\frac{1}{2}\al +}
\|v_1\|_{X\frac{d-2}{2}}
\|\jb{\nb}^s z_3\|_{L^{d+2}_{t, x} }^\al
\|\P_{N_3} \phi^\o\|_{H^{s}}^{1-\al}.
\label{nl4a}
\end{align}

\noi
Then,
by \eqref{nl4a} and \eqref{Ys4} in Lemma \ref{LEM:Ys1},
 we have
\begin{align*}
\bigg|\int_0^T & \int_{\R^d}   v_1  v_2   \jb{\nb}^\frac{d-2}{2} z_3  v_4 dx dt \bigg|
  \les
  \|v_1  \jb{\nb}^\frac{d-2}{2}z_3\|_{L^2_{t, x}}
\|v_2 v_4 \|_{L^2_{t, x}}\\
& \les
N_1^{\frac 12 -\frac {d-1}2 \al-}
N_2^{\frac 12-}
N_3^{\frac{d-4}{2} - s+\frac{1}{2}\al +}
 \|v_1\|_{X^\frac{d-2}{2}} \|v_2\|_{X^{\frac {d-2}2}}
 \|\jb{\nb}^s z_3\|_{L^{d+2}_{t, x} }^\al
\|\P_{N_3} \phi^\o\|_{H^{s}}^{1-\al} \|v_4\|_{Y^0}.
\end{align*}

\noi
Hence,
by  Lemmata \ref{PROP:Str1} and \ref{LEM:Hs},
the contribution to \eqref{nl1} in this case is at most
$
\les T^{0+} R \prod_{j = 1}^2 \|v_j\|_{X^{\frac {d-2}2}} $
outside a set of probability
\begin{equation*}
\leq  C\exp\bigg(-c \frac {R^2}{  T^{\frac{2}{d+2}-} \|\phi\|_{H^{s}}^{2}}\bigg)
+  C\exp\bigg(-c \frac {R^2}{  \|\phi\|_{H^{s}}^{2}}\bigg)
\end{equation*}

\noi
as long as \eqref{nl4}
is satisfied
and $\al< \frac{1}{d-1}$.

\medskip

\noi
{\bf Case (4):} $v z z$ case.

Without loss of generality,  assume $N_3 \geq N_2$.

\smallskip

\noi
{\bf $\bullet$  Subcase (4.a):} $N_1 \ges N_3$.

By $L^\frac{2(d+2)}{d}_{t,x}L^{d+2}_{t,x}L^{d+2}_{t,x}L^\frac{2(d+2)}{d}_{t,x}$-H\"older's inequality
and \eqref{Ys3} in Lemma \ref{LEM:Ys1},
we have
\begin{align*}
\bigg|\int_0^T \int_{\R^d} \jb{\nb}^\frac{d-2}{2} v_1 z_2 z_3 v_4 dx dt \bigg|
& \les  \|v_1\|_{X^{\frac{d-2}{2}}}
\|z_2\|_{L^{d+2}_{t, x}} \|z_3\|_{L^{d+2}_{t, x}} \|v_4\|_{Y^0}.
\end{align*}

\noi
Hence,
by  Lemma \ref{PROP:Str1},
the contribution to \eqref{nl1} in this case is at most
$
\les T^{0+}R^2  \|v_1\|_{X^\frac{d-2}{2}} $
outside a set of probability
\begin{equation}
\leq  C\exp\bigg(-c \frac {R^2}{  T^{\frac{2}{d+2}-} \|\phi\|_{H^{0+}}^{2}}\bigg)
\label{T4b}
\end{equation}

\noi
as long as $s> 0$.
As before, we have
 $\|\phi\|_{H^{0+}}$  instead of $\|\phi\|_{L^2}$
 in \eqref{T4b},
 allowing us to sum over $N_2$ and $N_3$.
If $N_3 \ges \max(N_1, N_4)$,
then this also allows us to sum over $N_1$ and $N_4$.
Otherwise,  we have $N_1\sim N_4\gg N_3$.
In this case,
 we can use Cauchy-Schwarz inequality to sum over $N_1 \sim N_4$.

\medskip

\noi
{\bf $\bullet$  Subcase (4.b):} $N_3 \gg N_1$.

First, suppose that $N_2 \sim N_3$.
Note that we must have $N_3 \ges N_4$ in this case.
Then, by $L^{d+2}_{t, x}L^4_{t, x}L^4_{t, x}L^\frac{2(d+2)}{d}_{t, x}$-H\"older's inequality with
\eqref{Ys3} in
Lemma \ref{LEM:Ys1},
we have
\begin{align*}
\bigg|\int_0^T \int_{ \R^d}  v_1 z_2  \jb{\nb}^\frac{d-2}{2} z_3 v_4 dx dt \bigg|
&  \les\|v_1\|_{X^\frac{d-2}{2}} \|\jb{\nb}^\frac{d-2}{4}z_2\|_{L^4_{t, x}}
\|\jb{\nb}^\frac{d-2}{4} z_3 \|_{L^4_{t, x}}\|v_4 \|_{Y^0}\\
& \les
N_3^{\frac{d-2}{2} - 2s}
\|v_1\|_{X^\frac{d-2}{2}}
\|\jb{\nb}^s z_2\|_{L^4_{t, x}}
\|\jb{\nb}^s z_3 \|_{L^4_{t, x}} \|v_4 \|_{Y^0}.
\end{align*}

\noi
Hence,
by  Lemma \ref{PROP:Str1},
the contribution to \eqref{nl1} in this case is at most
$
\les T^{0+} R^2  \|v_1\|_{X^{\frac {d-2}2}} $
outside a set of probability
\begin{equation*}
\leq  C\exp\bigg(-c \frac {R^2}{  T^{\frac{1}{2}-} \|\phi\|_{H^{s}}^{2}}\bigg)
\end{equation*}

\noi
as long as $ s> \frac {d-2}4$.

Hence, it remains to consider the case $N_3 \sim N_4 \gg N_1, N_2$.

\smallskip

\noi
$\circ$ \underline{Subsubcase (4.b.i):} $N_1, N_2\ll N_3^\frac 1{d-1}$.

By \eqref{T1a} and  \eqref{Ys4} in Lemma \ref{LEM:Ys1},
we have
\begin{align*}
\bigg|\int_0^T \int_{ \R^d} &  v_1 z_2  \jb{\nb}^\frac{d-2}{2} z_3 v_4 dx dt \bigg|
 \les \|z_2\jb{\nb}^\frac{d-2}{2} z_3 \|_{L^2_{t, x}}\|v_1 v_4 \|_{L^2_{t, x}}\\
& \les
N_1^{\frac 12 -}N_2^{\frac{d-1}{2}-s-\frac{d-1}{2}\al-}
 N_3^{\frac{d-4}{2}-s+\frac{1}{2}\al+}
\|v_1\|_{X^\frac{d-2}{2}}
\prod_{j = 2}^3\big(\|\jb{\nb}^s z_j\|_{L^4_{t, x} }^\al
\|\P_{N_j} \phi^\o\|_{H^{s}}^{1-\al}\big)
\|v_4\|_{Y^0}.
\end{align*}

\noi
Hence,
by  Lemmata \ref{PROP:Str1} and  \ref{LEM:Hs},
the contribution to \eqref{nl1} in this case is at most
$
\les T^{0+} R^2  \|v_1\|_{X^\frac{d-2}{2}} $
outside a set of probability
\begin{equation*}
\leq
 C\exp\bigg(-c \frac {R^2}{  T^{\frac{1}{2}-} \|\phi\|_{H^{s}}^{2}}\bigg)
+ C\exp\bigg(-c \frac {R^2}{  \|\phi\|_{H^{s}}^{2}}\bigg)
\end{equation*}

\noi
as long as
\begin{align}
s > \frac{(d-2)^2}{2d} = \frac{d-2}{d}\cdot \frac{d-2}{2}
\label{nl5}
\end{align}

\noi
and $\al< 1 -  \frac{2}{d-1} s$.
Note that the condition \eqref{nl5}
is less restrictive than \eqref{nl3}
and thus does not add a further constraint.

\smallskip

\noi
$\circ$ \underline{Subsubcase (4.b.ii):} $N_1\ll N_3^{\frac 1{d-1}} \les N_2$.

By H\"older's inequality and
\eqref{Ys4} in Lemma \ref{LEM:Ys1},
we have
\begin{align*}
\bigg|\int_0^T \int_{ \R^d}  v_1 z_2 & \jb{\nb}^\frac{d-2}{2} z_3 v_4 dx dt \bigg|
 \les \|z_2\|_{L^4_{t, x}} \|\jb{\nb}^\frac{d-2}{2} z_3 \|_{L^4_{t, x}}\|v_1 v_4 \|_{L^2_{t, x}}\\
& \les
N_1^{\frac 12 - } N_2^{-s} N_3^{\frac{d-3}{2} - s+}
\|v_1\|_{X^\frac{d-2}{2}} \prod_{j = 2}^3 \|\jb{\nb}^s z_j\|_{L^4_{t, x}}\|v_4\|_{Y^0}.
\end{align*}

\noi
Hence,
by Lemma \ref{PROP:Str1},
the contribution to \eqref{nl1} in this case is at most
$
\les T^{0+}R^2  \|v_1\|_{X^{\frac {d-2}2}} $
outside a set of probability
\begin{equation*}
\leq C\exp\bigg(-c \frac {R^2}{ T^{\frac 12-} \|\phi\|_{H^{s}}^{2}}\bigg)
\end{equation*}

\noi
as long as
\eqref{nl5} is satisfied.

\smallskip

\noi
$\circ$ \underline{Subsubcase (4.b.iii):} $N_2 \ll N_3^{\frac 1{d-1}} \les N_1$.

By H\"older's inequality and  Lemma \ref{LEM:Ys1},
we have
\begin{align*}
\bigg|\int_0^T \int_{\R^d}  v_1 z_2 & \jb{\nb}^\frac{d-2}{2} z_3 v_4 dx dt \bigg|
 \les \|v_1\|_{L^\frac{2(d+2)}{d}_{t, x}} \|\jb{\nb}^\frac{d-2}{2} z_3 \|_{L^{d+2}_{t, x}}\|z_2 v_4 \|_{L^2_{t, x}}\\
& \les
N_1^{ - \frac {d-2}{2}} N_2^{\frac{d-1}{2} - s-}
N_3^{\frac{d-3}{2}-s+}
\|v_1\|_{X^{\frac {d-2}2}} \|\P_{N_2}\phi^\o \|_{H^s}
\|\jb{\nb}^s z_3\|_{L^{d+2}_{t, x}}\|v_4\|_{Y^0}.
\end{align*}

\noi
Hence,
by Lemmata \ref{LEM:Hs} and  \ref{PROP:Str1},
the contribution to \eqref{nl1} in this case is at most
$
\les T^{0+}  R^2  \|v_1\|_{X^{\frac {d-2}2}} $
outside a set of probability
\begin{equation*}
\leq
C\exp\bigg(-c \frac {R^2}{  \|\phi\|_{H^{s}}^{2}}\bigg)
+
C\exp\bigg(-c \frac {R^2}{ T^{\frac 2{d+2}-}  \|\phi\|_{H^{s}}^{2}}\bigg)
\end{equation*}

\noi
as long as \eqref{nl5} is satisfied.

\smallskip

\noi
$\circ$ \underline{Subsubcase (4.b.iv):} $N_1, N_2\ges N_3^{\frac 1{d-1}}$.

By $L^\frac {2(d+2)}{d}_{t,x}L^{d+2}_{t,x}L^{d+2}_{t,x}L^\frac{2(d+2)}{d}_{t,x}$-H\"older's inequality
and
\eqref{Ys3} in Lemma \ref{LEM:Ys1},
we have
\begin{align*}
\bigg|\int_0^T \int_{\R^d}  v_1 z_2 \jb{\nb}^\frac{d-2}{2} z_3 v_4 dx dt \bigg|
& \les \|v_1\|_{L^\frac{2(d+2)}{d}_{t, x}} \|z_2\|_{L^{d+2}_{t, x}}
\|\jb{\nb}^\frac{d-2}{2}  z_3 \|_{L^{d+2}_{t, x}}\| v_4 \|_{L^\frac{2(d+2)}{d}_{t, x}}\\
& \les
N_1^{- \frac {d-2}2 } N_2^{-s} N_3^{\frac {d-2}2 -s}
\|v_1\|_{X^\frac{d-2}{2}} \prod_{j = 2}^3 \|\jb{\nb}^s z_j\|_{L^{d+2}_{t, x}}\|v_4\|_{Y^0}.
\end{align*}

\noi
Hence,
by  Lemma \ref{PROP:Str1},
the contribution to \eqref{nl1} in this case is at most
$
\les T^{0+} R^2  \|v_1\|_{X^{\frac {d-2}2 }} $
outside a set of probability
\begin{equation*}
\leq
C\exp\bigg(-c \frac {R^2}{ T^{\frac{2}{d+2}-} \|\phi\|_{H^{s}}^{2}}\bigg)
\end{equation*}

\noi
as long as \eqref{nl5} is satisfied.

\smallskip

Putting together Cases (1) - (4)  above,
the conclusion of Part (i)  follows, provided that
\eqref{nl3} is satisfied.

\medskip

\noi
(ii)
First, define  $\wt \G_N$ by
\begin{equation*}
\wt \G_N v(t) =\mp i  \int_0^t S(t-t')\P_{\leq N} \N (v+\eps z)(t') dt',
\end{equation*}

\noi
for $N \geq 1$.
As before, we have
\begin{align}
\|\P_{\leq N} \N (v+\eps z)\|_{L^1_t (\R; H^{\frac{d-2}{2}}_x)}
& \les N^\frac{d-2}{2}
\| v\|_{L^3_t(\R;  L^6_x)}^3
+ \eps^3 N^\frac{d-2}{2}
\| z\|_{L^3_t(\R;  L^6_x)}^3.
\label{NNLS5}
\end{align}

\noi
By a computation similar to \eqref{NNLS3},
we see that  the first term is finite.
Noting that
$(3, \frac{6d}{3d-4})$ is Strichartz admissible
and  $6 \geq \frac{6d}{3d-4}$,
it follows from  Lemma \ref{PROP:Str2}
that the second term
on the right-hand side of \eqref{NNLS5}
 is finite almost surely.
Hence, we can apply Lemma \ref{LEM:Ys1} (i)  to $\wt \G_N v$
for each finite $N\geq 1$, almost surely.

The rest of the proof for this part follows in a similar manner to the proof of Part (i)
by changing the time interval from $[0, T)$ to $\R$
and replacing $z$ by $\eps z$.
By applying Lemma \ref{PROP:Str2}
instead of
Lemma \ref{PROP:Str1} in the above computation,
we see that
the contribution to \eqref{nl1}, where $[0, T)$ is replaced by  $\R$,
is given by
\[\text{Case (2):} \  R^3,\quad
\text{Case (3):} \ R\prod_{j = 1}^2 \|v_j\|_{X^\frac{d-2}{2}(\R)},
\quad
\text{Case (4):} \ R^2 \|v_1\|_{X^\frac{d-2}{2}(\R)}\]

\noi
outside a set of probability
\begin{equation*}
\leq
C\exp\bigg(-c \frac {R^2}{\eps^2   \|\phi\|_{H^{s}}^{2}}\bigg)
\end{equation*}

\noi
in all cases as long as $s > s_d$.
\end{proof}

\section{Proofs of Theorems \ref{THM:1} and \ref{THM:2}}
\label{SEC:THM12}

In this section, we establish the almost sure local well-posedness (Theorem \ref{THM:1})
and probabilistic small data global theory (Theorem \ref{THM:2}).
First, we present the proof of  Theorems \ref{THM:1}.
Given $C_1$ and $C_2$ as in \eqref{nl1a} and \eqref{nl1b},
let $\eta_1 > 0$ be sufficiently small such that
\begin{equation}
C_1 \eta_1^2 \leq \frac{1}{2} \qquad \text{and}\qquad
2C_2 \eta_1^2 \leq \frac{1}{4}.
\label{T0a}
\end{equation}

\noi
Also, given $R\gg1$,
choose $T = T(R)$
such that
\[ T^\theta = \min \bigg( \frac{\eta_1}{2C_1R^3}, \frac{1}{4C_2R^2}\bigg).\]

\noi
Then, it follows from Proposition \ref{PROP:NL1} that $\G$ is a contraction
on the ball $B_{\eta_1}$ defined by
\[
B_{\eta_1}
:= \{ u\in X^\frac{d-2}{2}([0, T))\cap C([0, T); H^\frac{d-2}{2}):\,
\|u\|_{X^\frac{d-2}{2}([0, T))} \leq \eta_1\}\]

\noi
outside a set of probability
\[\leq C \exp\bigg(-c \frac{R^2}{\|\phi\|_{H^s}^2}\bigg)
\sim C \exp\bigg(-c \frac{1}{T^\g \|\phi\|_{H^s}^2}\bigg)\]

\noi
for some $\g > 0$.
This proves Theorem \ref{THM:1}.

Next, we prove Theorem \ref{THM:2}.
Let $\eta_2>0$ be sufficiently small such that
\begin{equation}
2 C_3 \eta_2^2 \leq 1\qquad \text{and}\qquad
3C_4 \eta_2^2 \leq \frac{1}{2},
\label{small0}
\end{equation}

\noi
where $C_3$ and $C_4$ are as in \eqref{nl1c} and \eqref{nl1d}.
Then, by Proposition \ref{PROP:NL1} with $R = \eta_2$
and $\phi^\o$ replaced by $\eps \phi^\o$,
we have
\begin{align}
\|\wt \G v\|_{X^\frac{d-2}{2}(\R)}
& \leq 2 C_3\eta_2^3 \leq \eta_2,
\label{small1}\\
\|\wt \G v_1 - \wt \G v_2  \|_{X^\frac{d-2}{2}(\R)}
& \leq 3 C_4 \eta_2^2
\|v_1 -v_2 \|_{X^\frac{d-2}{2}(\R)}
\leq \frac{1}{2}
\|v_1 -v_2 \|_{X^\frac{d-2}{2}(\R)}
\label{small2}
\end{align}

\noi
outside a set of probability
$\leq C \exp\big(-c \frac{\eta_2^2}{\eps^2\|\phi\|_{H^s}^2}\big)$.
Noting that $\eta_2$ is an absolute constant,
we conclude that
there exists a set $\Omega_\eps \subset \O$
such that
(i) $\wt \G = \wt \G^\o$ is a contraction on the ball $B_{\eta_2}$ defined by
\[
B_{\eta_2}
:= \{ u\in X^\frac{d-2}{2}(\R)\cap C(\R; H^\frac{d-2}{2}):\,
\|u\|_{X^\frac{d-2}{2}(\R)} \leq \eta_2\}\]

\noi
for $\o \in \O_\eps$,
and
(ii) $P(\O_\eps^c)
\leq C \exp\big(- \frac{c}{\eps^2 \|\phi\|_{H^s}^2}\big)$.
This proves global existence for \eqref{NLS1}
with initial data $\eps \phi^\o$ if $\o \in \O_\eps$.

Fix $\o \in \O_\eps$
and let $v = v(\eps, \o)$ be the global-in-time solution with $v|_{t = 0} = \eps \phi^\o$
constructed above.
In order to prove scattering, we need to show that
there exists $v_+^\o \in H^\frac{d-2}{2}(\R^d)$ such that
\begin{equation}
 S(-t) v(t) = \mp i \int_0^t S(-t') \N(v + \eps z)(t') dt' \to v_+^\o
\label{small2a}
 \end{equation}

\noi
in $H^\frac{d-2}{2}(\R^d)$ as $ t\to \infty$.
With $w(t) = S(-t) v(t)$,
define $I(t_1, t_2)$ and $\wt I(t_1, t_2)$ by
\begin{align*}
I(t_1, t_2)
& :=  S(t_2) \big(w(t_2) - w(t_1) ), \\
\wt I(t_1, t_2)& :=\mp i  \int_{0}^{t_2} S(t_2-t') \chi_{[t_1, \infty)}(t')\N(v + \eps z)(t') dt'.
\end{align*}

\noi
Then, for $0< t_1\leq t_2< \infty$,  we have
\begin{align*}
I(t_1, t_2)
= \mp i
S(t_2) \int_{t_1}^{t_2} S(-t') \N(v + \eps z)(t') dt'
= \wt I(t_1, t_2).
\end{align*}

\noi
Also, note that $\wt I(t_1, t_2) = 0$ if $t_1 > t_2$.
In the following, we view
$\wt I(t_1, t_2)$ as a function of $t_2$
and estimate its $X^\frac{d-2}{2}([0, \infty))$-norm.
We now revisit the computation in the proof of Proposition \ref{PROP:NL1}
for $\wt I(t_1, t_2)$.
In Case (1), we proceed slightly differently.
By Lemma \ref{LEM:Ys1} (i), H\"older's inequality,  and \eqref{Ys3},
we have
\begin{align}
\|\wt I(t_1, \cdot)\|_{X^\frac{d-2}{2}(\R_+)}
& \les
\sup_{\substack{v_4 \in Y^{0}(\R_+)\\\|v_4\|_{Y^{0}} = 1}}
\bigg|\int_0^\infty\int_{ \R^d}
\chi_{[t_1, \infty)}(t)  \jb{\nb}^\frac{d-2}{2} v\cj{ v} v v_4 dx dt \bigg| \notag \\
& \leq
\| \jb{\nb}^\frac{d-2}{2} v\|_{L^\frac{2(d+2)}{d}_{t, x}([t_1, \infty))}
\|v\|_{L^{d+2}_{t, x}([t_1, \infty))}^2.
\label{small3}
\end{align}

\noi
By \eqref{Ys3} in Lemma \ref{LEM:Ys1},  we have
\[ \| \jb{\nb}^\frac{d-2}{2} v\|_{L^\frac{2(d+2)}{d}_{t, x}(\R)} +
\|v\|_{L^{d+2}_{t, x}(\R)}\les
 \|v\|_{X^\frac{d-2}{2}(\R)} \leq \eta_2.\]

\noi
Then, by the monotone convergence theorem,
 \eqref{small3} tends to 0 as $t_1 \to \infty$.

In  Cases (2), (3), and (4),
we had at least one factor of $z$.
We multiply
 the cutoff function $\chi_{[t_1, \infty)}$
only on the $(\eps z)$-factors but not on the $v$-factors.
Note that $\|v\|_{X^\frac{d-2}{2}(\R)} \leq \eta_2$.
As in the proof of Proposition \ref{PROP:NL1},
we estimate at least a small portion of these $z$-factors
in  $\|\jb{\nb}^s \eps z^\o\|_{L^q_{t, x}([t_1, \infty))}$,
$q = 4,  \frac{6(d+2)}{d+4}$, or $d+2$, in each case.
Recall that  we have
$\|\jb{\nb}^s \eps z^\o\|_{L^q_{t, x}(\R)} \leq \eta_2$
for $\o \in \O_\eps$. See Lemma \ref{PROP:Str2}.
Hence, again by the monotone convergence theorem,
we have
  $\|\jb{\nb}^s \eps z^\o\|_{L^q_{t, x}([t_1, \infty))} \to 0$
  as $t_1 \to \infty$
  and thus
the contribution from Cases (2), (3), and (4) tends to 0 as $t_1 \to \infty$.
Therefore, we have
\[ \lim_{t_1\to \infty } \| \wt I(t_1, t_2) \|_{X^\frac{d-2}{2}([0, \infty))}
= 0.\]

In conclusion, we obtain
\begin{align*}
\lim_{t_1\to \infty }&  \sup_{t_2 > t_1} \|  w(t_2) -  w(t_1)  \|_{H^\frac{d-2}{2}}
=  \lim_{t_1\to \infty } \sup_{t_2 > t_1} \|  I(t_1, t_2)  \|_{H^\frac{d-2}{2}}\notag\\
&  = \lim_{t_1\to \infty } \| \wt I(t_1, t_2) \|_{L^\infty_{t_2}([0, \infty);  H^\frac{d-2}{2})} 
 \les \lim_{t_1\to \infty } \| \wt I(t_1, t_2) \|_{X^\frac{d-2}{2}([0, \infty))}
= 0.
\end{align*}

\noi
This proves \eqref{small2a} and scattering of $u^\o(t) = \eps S(t) \phi^\o + v^\o(t)$, which completes the proof of Theorem \ref{THM:2}.

\section{Local well-posedness of  NLS with a deterministic perturbation}
\label{SEC:6}

 In this and the next sections, we  consider the following Cauchy problem
of the defocusing NLS with a perturbation:
\begin{equation}
\begin{cases}
	 i \dt v + \Dl v =  |v + f|^2(v+f)\\
v|_{t = t_0} = v_0
 \end{cases}
\label{ZNLS1}
\end{equation}

\noi
where $f$ is a given {\it deterministic} function.
Assuming some suitable conditions on $f$,
we prove local well-posedness of \eqref{ZNLS1} in this section (Proposition \ref{PROP:LWP2})
and long time existence
under further assumptions
in Section \ref{SEC:7} (Proposition \ref{PROP:perturb2}).
Then, we
show,  in Section \ref{SEC:8},  that
the conditions imposed on $f$ for long time existence are satisfied with a large probability
by setting  $f(t) = z(t)  = S(t) \phi^\o$.
This yields Theorem \ref{THM:3}.

Our main goal is to prove long time existence of solutions to the perturbed NLS \eqref{ZNLS1}
by iteratively applying a perturbation lemma (Lemma \ref{LEM:perturb}).
For this purpose, we first prove a ``variant'' local well-posedness of \eqref{ZNLS1}.
As in the usual critical regularity theory, we first introduce
an  auxiliary scaling-invariant norm which is  weaker than the $X^\frac{d-2}{2}$-norm.
Given an interval $I \subset \R$, we introduce the $Z$-norm by
\begin{equation}
\|u\|_{Z(I)} := \bigg( \sum_{\substack{N\geq 1 \\ \text{ dyadic}}}
N^{d-2} \|\P_{N} u\|_{L^4_{t, x}(I\times \R^d)}^4 \bigg)^\frac{1}{4}.
\label{Z0}
\end{equation}

\noi
By the Littlewood-Paley theory
and  \eqref{Ys3} in Lemma \ref{LEM:Ys1},
we have
\begin{equation*}
 \|u\|_{Z(I)}
\les \| \jb{\nb}^\frac{d-2}{4} u \|_{L^4_{t, x}(I\times \R^d)}
\les \|u\|_{X^\frac{d-2}{2}(I)}.
\end{equation*}

\noi
Given $\theta \in (0, 1)$,
we define the  $Z_\ta$-norm by
\[ \|u\|_{Z_\ta(I)} := \|u\|_{Z(I)}^\ta \|u\|_{X^\frac{d-2}{2}(I)}^{1-\ta}.\]

\noi
Note that the $Z_\ta$-norm is weaker than the $X^\frac{d-2}{2}$-norm:
\begin{equation}
 \|u\|_{Z_\ta(I)} \leq C_0  \|u\|_{X^\frac{d-2}{2}(I)}.
\label{Z0a}
\end{equation}

\noi
for some $C_0 > 0$ independent of $I$.

First, we present the bilinear Strichartz estimate
adapted to the $Z_\ta$-norm.


\begin{lemma}\label{LEM:Z1}
Let $N_1 \leq N_2$.
Then, we have
\begin{align}
\| \P_{N_1} u_1 \P_{N_2}u_2\|_{L^2_{t, x}(I\times \R^d)}
& \les  \bigg(\frac{N_1}{N_2}\bigg)^{\frac 12(1-\theta)-}
\|\P_{N_1} u_1\|_{Z_\ta(I)}\|\P_{N_2} u_2\|_{Y^0(I)}.
\label{Z1a}
\end{align}

\end{lemma}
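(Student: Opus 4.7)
The plan is to establish the estimate by geometric interpolation between two endpoint bilinear estimates: one corresponding to $\theta = 0$ (where $Z_\theta = X^{\frac{d-2}{2}}$) and one corresponding to $\theta = 1$ (where $Z_\theta = Z$). For the $\theta = 0$ endpoint, I would invoke the bilinear Strichartz estimate \eqref{Ys4} and trade $N_1^{(d-2)/2}\|\P_{N_1} u_1\|_{Y^0}$ for $\|\P_{N_1} u_1\|_{X^{(d-2)/2}}$ via the frequency-localized identity $\|\P_{N_1} u_1\|_{Y^{(d-2)/2}} = N_1^{(d-2)/2}\|\P_{N_1} u_1\|_{Y^0}$ together with the embedding $X^{(d-2)/2} \hookrightarrow Y^{(d-2)/2}$ from \eqref{inclusions}. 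This produces
\begin{equation*}
\|\P_{N_1} u_1 \P_{N_2} u_2\|_{L^2_{t,x}(I)} \les \Big(\frac{N_1}{N_2}\Big)^{\!\frac12-} \|\P_{N_1} u_1\|_{X^{\frac{d-2}{2}}(I)} \|\P_{N_2} u_2\|_{Y^0(I)}.
\end{equation*}

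The $\theta = 1$ endpoint, namely
\begin{equation*}
\|\P_{N_1} u_1 \P_{N_2} u_2\|_{L^2_{t,x}(I)} \les \|\P_{N_1} u_1\|_{Z(I)} \|\P_{N_2} u_2\|_{Y^0(I)},
\end{equation*}
requires a more careful H\"older split. I would apply
\begin{equation*}
\|\P_{N_1} u_1 \P_{N_2} u_2\|_{L^2_{t,x}} \leq \|\P_{N_1} u_1\|_{L^4_t L^{2d}_x} \, \|\P_{N_2} u_2\|_{L^4_t L^{\frac{2d}{d-1}}_x}.
\end{equation*}
Since $\bigl(4, \tfrac{2d}{d-1}\bigr)$ is Schr\"odinger admissible with $q = 4 > 2$, Lemma \ref{LEM:Ys1}(ii) bounds the second factor by $\|\P_{N_2} u_2\|_{Y^0(I)}$. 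For the first factor, Bernstein's inequality from $L^4_x$ to $L^{2d}_x$ costs exactly $N_1^{d/4-1/2} = N_1^{(d-2)/4}$, which precisely matches the weight in the definition of $Z$ for a single dyadic block; hence $\|\P_{N_1} u_1\|_{L^4_t L^{2d}_x} \les N_1^{(d-2)/4}\|\P_{N_1} u_1\|_{L^4_{t,x}} = \|\P_{N_1} u_1\|_{Z(I)}$.

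Finally, I would take the geometric mean of the two endpoint estimates, raising the $\theta=0$ bound to the power $1-\theta$ and the $\theta=1$ bound to the power $\theta$ and multiplying. The ratio factor combines to $(N_1/N_2)^{(1-\theta)/2-}$, while the $u_1$-factor becomes $\|\P_{N_1} u_1\|_{X^{(d-2)/2}}^{1-\theta} \|\P_{N_1} u_1\|_{Z}^{\theta} = \|\P_{N_1} u_1\|_{Z_\theta(I)}$ by the very definition of the $Z_\theta$ quasi-norm; the $\|\P_{N_2} u_2\|_{Y^0}$ factor reassembles since its exponents add to $1$. The most delicate point in the plan is the exact cancellation between the Bernstein cost $N_1^{(d-2)/4}$ and the weight in the $Z$-norm at the $\theta = 1$ endpoint: any mismatch there would leave a residual power of $N_1$ or $N_2$ that would corrupt the clean $(N_1/N_2)^{(1-\theta)/2-}$ ratio after interpolation.
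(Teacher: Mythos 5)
Your proposal is correct, and the interpolation step (geometric mean of two endpoint bounds, with the $\theta=0$ endpoint read off from \eqref{Ys4} via $N_1^{\frac{d-2}{2}}\|\P_{N_1}u_1\|_{Y^0}=\|\P_{N_1}u_1\|_{Y^{\frac{d-2}{2}}}\les \|\P_{N_1}u_1\|_{X^{\frac{d-2}{2}}}$) coincides with the paper's. Where you genuinely diverge is in the proof of the $\theta=1$ endpoint $\|\P_{N_1}u_1\P_{N_2}u_2\|_{L^2_{t,x}}\les\|\P_{N_1}u_1\|_{Z}\|\P_{N_2}u_2\|_{Y^0}$. The paper keeps a symmetric $L^4_{t,x}\times L^4_{t,x}$ H\"older split, tiles the high-frequency annulus $|\xi|\sim N_2$ by cubes $R$ of side length $N_1$, applies the frequency-localized Strichartz bound \eqref{Ys5} to each piece $\P_R\P_{N_2}u_2$ (so the derivative loss is $N_1^{\frac{d-2}{4}}$, governed by the size of the cube rather than its location), and then reassembles by almost orthogonality in $R$. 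You instead use the asymmetric split $L^4_tL^{2d}_x\times L^4_tL^{\frac{2d}{d-1}}_x$, put the admissible-pair Strichartz bound \eqref{Ys2} on the high-frequency factor with no loss, and absorb the entire cost into the low-frequency factor via Bernstein, whose exponent $N_1^{\frac{d}{4}-\frac12}=N_1^{\frac{d-2}{4}}$ indeed matches the dyadic weight in the $Z$-norm exactly. Both routes are valid and give the identical endpoint; yours is somewhat more elementary in that it avoids the cube decomposition and the almost-orthogonality argument entirely, at the price of an asymmetric H\"older exponent choice that only works because $(4,\frac{2d}{d-1})$ happens to be admissible for $d\geq 3$ and the Bernstein cost lands precisely on $\frac{d-2}{4}$ — the "delicate cancellation" you flag at the end is real, and it checks out.
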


\begin{proof}
Given a cube $R$  of side length $N_1$ centered at $\xi_0\in N_1\Z^d$,
let $\P_R = \psi\big(\frac{D-\xi_0}{N_1}\big)$ denote
a  smooth projection onto $R$ on the frequency side.
Here, $\psi$ denotes the smooth projection onto the unit cube defined in \eqref{mod1a}.
By \eqref{Ys5},
we have
\begin{align*}
\| \P_{N_1} u_1 \P_{R}\P_{N_2}u_2\|_{L^2_{t, x}(I)}
& \leq
\|\P_{N_1} u_1 \|_{L^4_{t, x}}
\|\P_{R}\P_{N_2}u_2\|_{L^4_{t, x}}\\
& \les N_1^\frac{d-2}{4} \|\P_{N_1} u_1 \|_{L^4_{t, x}(I)}
\|\P_{R}\P_{N_2}u_2\|_{Y^0(I)}.
\end{align*}

\noi
Then, by almost orthogonality, we have
\begin{align*}
\| \P_{N_1} u_1 & \P_{N_2}u_2\|_{L^2_{t, x}(I)}
\sim
\Big(\sum_{R}
\| \P_{N_1} u_1 \P_R \P_{N_2}u_2\|^2_{L^2_{t, x}(I)}\Big)^\frac{1}{2}\notag\\
& \les
\|\P_{N_1} u_1\|_{Z(I)} \Big(\sum_{R}
\|\P_{R}\P_{N_2} u_2\|^2_{Y^0(I)}\Big)^\frac{1}{2}
\les \|\P_{N_1} u_1\|_{Z(I)}
 \|\P_{N_2} u_2\|_{Y^0(I)}.
 \end{align*}

\noi
Then, \eqref{Z1a} follows from interpolating this with \eqref{Ys4}.
\end{proof}

Next, we state the key nonlinear estimate.
Given $I \subset \R$, we define the $W^s$-norm by
\begin{align}
\|f\|_{W^s(I)} := \max\Big(\|\jb{\nb}^s f \|_{L^4_{t, x}(I)} ,  \|\jb{\nb}^sf\|_{L^{d+2}_{t, x}(I)},
 \|\jb{\nb}^sf\|_{L^\frac{6(d+2)}{d+4}_{t, x}(I)}\Big).
\label{W1}
\end{align}

\noi
As in the proof of Proposition \ref{PROP:NL1},
different  space-time norms of  $f$
appear in the estimate
but they are all controlled by this $W^s$-norm.
The following lemma  is analogous to Proposition \ref{PROP:NL1}
but with one important difference.
All the terms on the right-hand side have
(i) two factors of the $Z_\ta$-norm of $v_j$, which is weaker than the $X^s$-norm,
or (ii)  the $W^s$-norm of $f$, which can be made small by shrinking the interval $I$.

\begin{lemma}\label{LEM:NL2}
Let $d \geq 3$ and $\ta \in (0, 1)$.
Suppose that
$s , \al \in \R$ satisfy
\begin{align}
s \in (s_d, s_\textup{crit}] \qquad
\text{and} \qquad 0 < \al < 1-  \frac{2}{d-1} s,
\label{NL2a}
\end{align}

\noi
where $s_d$ is as in \eqref{Sd1}.
Then,
 given any interval $I = [t_0, t_1]\subset \R$,
we have
\begin{align*}
\bigg\|  \prod_{j = 1}^3(v_j & + f)^* \bigg\|_{N^\frac{d-2}{2}(I)}
 \les
\sum_{j = 1}^3 \|v_j\|_{X^\frac{d-2}{2}(I)}
\prod_{\substack{k = 1\\k\ne j}}^3\|v_k\|_{Z_\ta(I)}\\
& \hphantom{X}
+ \prod_{\substack{j, k =1\\j\ne k}}^3
\|v_j\|_{X^\frac{d-2}{2}(I)}\|v_k\|_{X^\frac{d-2}{2}(I)}
 \Big(  \|f\|_{W^s(I)} +
\|f\|_{Y^s(I)}^{1-\al}
\|f\|_{W^s(I)}^\al
\Big) \\
& \hphantom{X}
 + \sum_{j = 1}^3 \|v_j\|_{X^{\frac{d-2}{2}}(I)}
\Big(
\|f\|_{Y^s(I)}
 \|f\|_{W^s(I)}
+ \|f\|_{Y^s(I)}^{2-2\al}
 \|f\|_{W^s(I)}^{2\al}
+
 \|f\|_{W^s(I)}^2\Big)\\
& \hphantom{X}
 +  \|f \|_{Y^s(I)}\|f\|_{W^s(I)}^2
+ \|f\|_{Y^s(I)}^{3-2\al}
 \|f\|_{W^s(I)}^{2\al}
+
\|f\|_{W^s(I)}^3,
\end{align*}
		
\noi
for all $f \in W^s(I) \cap Y^s(I)$
and $v_j\in X^{\frac{d-2}{2}}(I)$, $j=1,2,3$,
where $(v_j + f)^* = v_j + f$ or $\cj{v}_j + \cj{f}$.

\end{lemma}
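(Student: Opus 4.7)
The plan is to mimic the proof of Proposition \ref{PROP:NL1}, with the deterministic $f$ playing the role of the random linear piece $z = S(t)\phi^\omega$ and the $Z_\theta$-norm absorbing the ``subcritical gain'' previously extracted from independence. By Lemma \ref{LEM:Ys1}(i) applied with a frequency truncation $\P_{\leq N}$ (and then passing to the limit $N \to \infty$), it suffices to bound, uniformly in $N$ and in $v_4 \in Y^0(I)$ with $\|v_4\|_{Y^0(I)}=1$, the quantity
\begin{equation*}
\bigg|\int_I \int_{\R^d} \jb{\nb}^{\frac{d-2}{2}}\bigl( w_1 w_2 w_3 \bigr)^* \, v_4 \, dx \, dt\bigg|,
\end{equation*}
where each $w_j$ is either $v_j$ or $f$. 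Dyadically decomposing $w_1, w_2, w_3$ at frequencies $N_1, N_2, N_3$ and $v_4$ at $N_4$, I split into the four combinatorial cases (three $v$'s; two $v$'s and one $f$; one $v$ and two $f$'s; three $f$'s) and, within each, further split according to the ordering of the $N_j$'s and the high-low frequency regimes exactly as in the proof of Proposition \ref{PROP:NL1}.

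\medskip

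The one genuinely new ingredient is the pure $vvv$ case, where two of the three $v$-factors must be extracted in the $Z_\theta$-norm rather than the full $X^{\frac{d-2}{2}}$-norm. Placing the derivative on the highest-frequency factor (say $N_1 \geq N_2 \geq N_3$) and decomposing the integrand via Cauchy--Schwarz as $\|\jb{\nb}^{\frac{d-2}{2}}\P_{N_1}v_1 \cdot \P_{N_3} v_3\|_{L^2_{t,x}} \|\P_{N_2} v_2 \cdot v_4\|_{L^2_{t,x}}$, two applications of the refined bilinear estimate \eqref{Z1a} of Lemma \ref{LEM:Z1} produce the summable gain $(N_3/N_1)^{\frac{1-\theta}{2}-}(N_2/N_1)^{\frac{1-\theta}{2}-}$ together with $\|v_1\|_{X^{\frac{d-2}{2}}} \|v_2\|_{Z_\theta} \|v_3\|_{Z_\theta}$. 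Symmetrizing in the three $v$'s yields exactly the first line of the claimed inequality.

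\medskip

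For the remaining cases involving at least one factor of $f$, I would reuse the case analysis of Proposition \ref{PROP:NL1} essentially verbatim, with two dictionary translations: each $L^q_{t,x}$-norm of $\jb{\nb}^s z^\omega$ (for $q = 4,\ d+2,\ \tfrac{6(d+2)}{d+4}$) that appeared there becomes $\|f\|_{W^s(I)}$ via \eqref{W1}, while each occurrence of $\|\P_{N_j}\phi^\omega\|_{H^s}$---which entered in Subsubcases~(2.b.i) and (4.b) through the interpolation $\|z_2 \jb{\nb}^{\frac{d-2}{2}} z_3\|_{L^2_{t,x}} \les \|z\|_{L^4_{t,x}}^{\al} \|z\cdot z\|_{L^2_{t,x}}^{1-\al}$ combined with the trivial $L^\infty_t H^s_x$ bound on a linear solution---becomes $\|f\|_{Y^s(I)}$ through the embedding $Y^s(I) \hookrightarrow L^\infty_t H^s_x$. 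This substitution is precisely what produces the mixed powers $\|f\|_{Y^s}^{1-\al}\|f\|_{W^s}^{\al}$, $\|f\|_{Y^s}^{2-2\al}\|f\|_{W^s}^{2\al}$, $\|f\|_{Y^s}^{3-2\al}\|f\|_{W^s}^{2\al}$, and pure $\|f\|_{W^s}^k$ terms in the right-hand side of the inequality, together with the linear $\|f\|_{Y^s}\|f\|_{W^s}^{k-1}$ terms coming from the low-frequency $f$-factors treated in $Y^s$.

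\medskip

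The principal obstacle is organizational rather than analytic: one must verify that in every dyadic interaction the summability is guaranteed by a positive power of $N_{\min}/N_{\max}$ (the bilinear gain in Lemma \ref{LEM:Z1}, or the gap $\frac{d-2}{2} - \frac{d+1}{d-1} s < 0$ encoded in \eqref{nl3}) and that the interpolation exponent $\al$ satisfies $\al < 1 - \tfrac{2}{d-1} s$, which is exactly hypothesis \eqref{NL2a} and closes the bookkeeping in Subsubcase~(2.b.i). The lower threshold $s > s_d$ from \eqref{NL2a} is precisely the regularity at which all frequency-balancing inequalities hold simultaneously, just as in Proposition \ref{PROP:NL1}.
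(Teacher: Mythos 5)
Your overall architecture matches the paper's: dualize via Lemma \ref{LEM:Ys1}(i), run the dyadic case analysis of Proposition \ref{PROP:NL1} with the dictionary $\|\jb{\nb}^s z\|_{L^q_{t,x}} \to \|f\|_{W^s(I)}$ and $\|\P_{N_j}\phi^\o\|_{H^s} \to \|f\|_{Y^s(I)}$ (the latter entering through the $Y^0$-input of the bilinear estimate \eqref{Ys4}, which for a free solution reduces to the $L^2$-norm of the data), and treat the pure $vvv$ interaction with the $Z_\theta$-refined bilinear estimate. That is exactly what the paper does for Cases (2)--(4).

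The gap is in the one place you flag as new: the $vvv$ case. With $N_1 \ge N_2 \ge N_3$, the frequency constraint leaves two regimes, $N_1 \sim N_4$ or $N_1 \sim N_2 \gg N_4$. Your single pairing $\|\jb{\nb}^{\frac{d-2}{2}}\P_{N_1}v_1\,\P_{N_3}v_3\|_{L^2_{t,x}}\,\|\P_{N_2}v_2\,\P_{N_4}v_4\|_{L^2_{t,x}}$ with ``two applications of \eqref{Z1a}'' only works in the first regime, where $N_2 \lesssim N_4$. In the second regime \eqref{Z1a} applied to $\P_{N_2}v_2\,\P_{N_4}v_4$ necessarily places the \emph{lower}-frequency factor, i.e.\ $v_4$, in the $Z_\theta$-norm and $v_2$ in $Y^0$; since the dual function is only normalized by $\|v_4\|_{Y^0}=1$ and $\|\cdot\|_{Z_\theta}$ is not controlled by $\|\cdot\|_{Y^0}$ (that would require a $U^2$-type bound), the estimate does not close, and the claimed gain $(N_2/N_1)^{\frac{1}{2}(1-\theta)-}$ is not what the lemma yields (the ratio involves $N_4$, not $N_1$). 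The paper handles this regime as a separate subcase: it uses $L^4_{t,x}\times L^4_{t,x}$ H\"older on the pair $(v_2,v_4)$, recognizes $N_2^{\frac{d-2}{4}}\|\P_{N_2}v_2\|_{L^4_{t,x}}$ as a block of the $Z$-norm \eqref{Z0} (and $\|v\|_{Z}\les\|v\|_{Z_\theta}$ by \eqref{Z0a}), gains $(N_4/N_2)^{\frac{d-2}{4}}$ from \eqref{Ys5} applied to $v_4$, and closes the dyadic sums with Lemma \ref{LEM:Schur}. You need this (or an equivalent) extra argument; as written, your Case (1) does not cover all frequency interactions.
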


We first state and prove the following local well-posedness
result for the perturbed NLS \eqref{ZNLS1}, assuming Lemma \ref{LEM:NL2}.
The proof of Lemma \ref{LEM:NL2} is presented at the end of this section.


\begin{proposition}[Local well-posedness of the perturbed NLS]\label{PROP:LWP2}
Given $d \geq 3$,
let $s \in ( s_d,  s_\textup{crit}]$, where
$s_d$ is defined in \eqref{Sd1}.
Let $\theta \in (\frac 12, 1)$
and $ \al \in \R$ satisfy \eqref{NL2a}.
Suppose that
\[\|v_0\|_{H^{\frac{d-2}{2}}}\leq R \quad \text{ and } \quad \|f\|_{Y^s(I)}\leq M,\]

\noi
for some $R, M\geq 1$.
Then, there exists  small $\eta_0 = \eta_0(R,M)>0$ such that
if
\[ \|S(t-t_0)v_0\|_{Z_\theta(I)}\leq \eta
\qquad \text{and}
\qquad
\|f\|_{W^s(I)} \leq \eta^{\frac{4-\al}{\al}}\]

\noi
for some $\eta \leq \eta_0$ and
some time interval  $I = [t_0, t_1] \subset \R$,
then there exists a unique solution $v \in X^\frac{d-2}{2}(I)\cap C(I; H^\frac{d-2}{2}(\R^d))$
to \eqref{ZNLS1}
with $v(t_0) = v_0$.
Moreover, we have
\begin{align}
\|v - S(t-t_0) v_0\|_{X^\frac{d-2}{2}(I)} \les \eta^{3-2\ta}.
\label{LWP2a}
\end{align}

\end{proposition}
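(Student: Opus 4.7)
The natural approach is a contraction mapping argument for the Duhamel operator
\[
\Phi v(t) := S(t-t_0) v_0 - i \int_{t_0}^t S(t-t')\, \N(v+f)(t')\, dt',
\]
carried out in a ball that simultaneously tracks the strong critical norm $X^{\frac{d-2}{2}}(I)$ (which carries the $O(R)$ size of the data) and the weaker, scaling-invariant norm $Z_\ta(I)$ (in which the linear evolution is small of size $\eta$). Concretely, I would work in
\[
B := \bigl\{v \in X^{\frac{d-2}{2}}(I)\cap C(I;H^{\frac{d-2}{2}}):\ \|v\|_{X^{\frac{d-2}{2}}(I)} \leq 2R,\ \|v\|_{Z_\ta(I)} \leq 2\eta\bigr\},
\]
viewed as a complete metric space under the $X^{\frac{d-2}{2}}(I)$-metric.

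The key input is Lemma \ref{LEM:NL2} applied with $v_1 = v_2 = v_3 = v$. Substituting $\|v\|_{X^{\frac{d-2}{2}}(I)} \leq 2R$, $\|v\|_{Z_\ta(I)} \leq 2\eta$, $\|f\|_{Y^s(I)} \leq M$, and $\|f\|_{W^s(I)} \leq \eta^{(4-\al)/\al}$ into that estimate produces a bound of the shape
\[
\|\Phi v - S(\cdot -t_0)v_0\|_{X^{\frac{d-2}{2}}(I)} \les R\,\eta^2 + C(R,M)\,\eta^{3}.
\]
Indeed, the calibration $\|f\|_{W^s(I)} \leq \eta^{(4-\al)/\al} =: \mu$ is chosen precisely so that $\mu \leq \eta^3$ for $0<\al<1$, in which case the two dominant $f$-terms produced by Lemma \ref{LEM:NL2}, namely $(2R)^2\mu$ and $(2R)^2 M^{1-\al}\mu^{\al} = (2R)^2 M^{1-\al}\eta^{4-\al}$, are each bounded by $C(R,M)\eta^3$, while the remaining $f$-contributions carry still larger powers of $\eta$. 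Since $\ta>\frac12$ forces $3-2\ta<2$, choosing $\eta_0 = \eta_0(R,M)$ sufficiently small absorbs this bound into $\eta^{3-2\ta}$, which is precisely \eqref{LWP2a}.

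Self-mapping of $B$ then follows at once: $\|\Phi v\|_{X^{\frac{d-2}{2}}(I)} \leq R + C\eta^{3-2\ta} \leq 2R$, and by the embedding $\|\cdot\|_{Z_\ta(I)} \leq C_0 \|\cdot\|_{X^{\frac{d-2}{2}}(I)}$ of \eqref{Z0a},
\[
\|\Phi v\|_{Z_\ta(I)} \leq \eta + C_0 C \eta^{3-2\ta} \leq 2\eta
\]
for $\eta_0$ small. For the contraction estimate, I would decompose $\N(v_1+f) - \N(v_2+f)$ as a telescoping sum of three trilinear expressions, each carrying one factor of $v_1-v_2$, and invoke the trilinear form of Lemma \ref{LEM:NL2} (with the difference $v_1-v_2$ occupying one slot) to obtain $\|\Phi v_1 - \Phi v_2\|_{X^{\frac{d-2}{2}}(I)} \leq \tfrac12 \|v_1-v_2\|_{X^{\frac{d-2}{2}}(I)}$ under the same smallness conditions. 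A unique fixed point in $B$ is then the desired solution.

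The main obstacle is the careful bookkeeping behind the second paragraph: one must verify \emph{term by term} that all seven distinct $f$-contributions in Lemma \ref{LEM:NL2} collapse to at worst $C(R,M)\eta^3$ under the single calibration $\|f\|_{W^s(I)} \leq \eta^{(4-\al)/\al}$, so that one small parameter $\eta$ closes both balls simultaneously and produces the sharper gain $\eta^{3-2\ta}$ that will be consumed by the iterated perturbation argument in Section \ref{SEC:7}.
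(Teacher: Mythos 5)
Your argument is essentially the paper's proof: the same Duhamel map, the same two-norm ball (strong $X^{\frac{d-2}{2}}$-bound of size $O(R)$, weak $Z_\ta$-bound of size $O(\eta)$), the same use of Lemma \ref{LEM:NL2}, and the same observation that the calibration $\|f\|_{W^s(I)}\leq \eta^{\frac{4-\al}{\al}}$ with $\frac{4-\al}{\al}>3$ forces every $f$-contribution to be $O(\eta^3)$, so that the Duhamel term is $O(R\eta^2)$ and is absorbed into $\eta^{3-2\ta}$ once $\eta_0\ll R^{-\frac{1}{2\ta-1}}$ (the paper's \eqref{LWP2c}, with $R$ replaced by $\max(R,M)$).

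There is one step that is not justified as written: the line
$\|\Phi v\|_{Z_\ta(I)}\leq \eta + C_0C\eta^{3-2\ta}$
tacitly uses a triangle inequality for $\|\cdot\|_{Z_\ta}=\|\cdot\|_{Z}^\ta\|\cdot\|_{X^{\frac{d-2}{2}}}^{1-\ta}$. This quantity is an interpolated product of two norms, not a norm, and it is not subadditive (the map $(a,b)\mapsto a^\ta b^{1-\ta}$ is concave and homogeneous, hence \emph{super}additive on the upper bounds $\|u\|_Z+\|w\|_Z$ and $\|u\|_X+\|w\|_X$, so splitting $\Phi v = S(t-t_0)v_0 + (\Phi v - S(t-t_0)v_0)$ produces genuine cross terms). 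The paper avoids this by bounding $\|\G v\|_Z$ and $\|\G v\|_{X^{\frac{d-2}{2}}}$ separately and multiplying, which yields the extra terms $C\eta^{2\ta}\wt R$ and $C\eta^{2-\ta}\wt R^{1-\ta}$ in addition to $\eta+C\eta^2\wt R$; each of these is $\leq \eta$ precisely because of the choice $\eta_0\ll \wt R^{-\frac{1}{2\ta-1}}$. Your conclusion survives, but you must either carry out this product expansion or bound the two cross terms $\|S(t-t_0)v_0\|_Z^{\ta}\,\|\Phi v - S(t-t_0)v_0\|_{X}^{1-\ta}$ and $\|\Phi v - S(t-t_0)v_0\|_{Z}^{\ta}\,\|S(t-t_0)v_0\|_{X}^{1-\ta}$ by hand; both are $\leq \eta$ under the same smallness condition on $\eta_0$, using $\|\cdot\|_Z\les\|\cdot\|_X$ and $(3-2\ta)\ta>1$ for $\ta\in(\tfrac12,1)$. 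Everything else in your write-up matches the paper.
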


\begin{proof}
For  $\ta \in (\frac 12 , 1)$, we show that the map $\G$ defined by
\begin{equation}
\G v(t) := S(t-t_0) v_0 -i  \int_{t_0}^t S(t -t') \N(v+f)(t') dt'
\label{LWP2b}
\end{equation}

\noi is a contraction on
\[ B_{R, M,  \eta} = \{ v \in X^\frac{d-2}{2}(I)\cap C(I; H^\frac{d-2}{2}):
 \, \|v\|_{X^\frac{d-2}{2}(I)}\leq 2\wt R,
\ \|v\|_{Z_\ta(I)} \leq 2\eta\}\]

\noi
where $\wt R :=\max(R, M)$.
Now, choose
\begin{align}
\eta_0  \ll \wt R^{-\frac{1}{2\ta -1}}.
\label{LWP2c}
\end{align}

\noi
In particular, we have $\eta_0 \ll \wt R^{-1} \leq 1$.
Fix $\eta \leq \eta_0$ in the following.
Noting that $\frac{4-\al}{\al} > 3$,
 Lemma \ref{LEM:NL2}
with Lemma \ref{LEM:Ys1} yields
\begin{align}
  \|\G v\|_{X^\frac{d-2}{2}(I)}
&  \leq
 \|S(t-t_0)v_0\|_{X^\frac{d-2}{2}(I)}
+  \|\Gamma v - S(t-t_0) v_0\|_{X^\frac{d-2}{2}(I)} \notag \\
& \leq \|v_0\|_{H^\frac{d-2}{2}} + C \eta^2 \wt R \leq 2\wt R,
\label{LWP2d}
\end{align}

\noi
and
\begin{align*}
  \| \G v_1 - \G v_2 \|_{X^\frac{d-2}{2}(I)}
  \leq \frac 12
\| v_1 - v_2 \|_{X^\frac{d-2}{2}(I)}
\end{align*}

\noi
for $v, v_1, v_2 \in B_{R, M, \eta}$.
Moreover,   we have
\begin{align*}
 \|\G v\|_{Z_\ta(I)}
 & \leq
\big(\|S(t-t_0) v_0\|_{Z(I)} +
C \eta^2 \wt R\big)^{\ta}
\big(\|S(t-t_0) v_0\|_{X^\frac{d-2}{2}(I)} +
C \eta^2\wt R \big)^{1-\ta}\notag \\
& \leq  \eta + C \eta^{2\ta}\wt R +C\eta^{2-\theta}\wt R^{1-\theta}+ C\eta^2 \wt R
\leq 2 \eta
\end{align*}

\noi
for $v  \in B_{R, M, \eta}$.
Hence,
$\G$ is a contraction on $B_{R, M, \eta}$.	
The estimate \eqref{LWP2a}
follows from \eqref{LWP2c} and  \eqref{LWP2d}.
\end{proof}

We conclude this section by  presenting the proof of Lemma \ref{LEM:NL2}.
Some cases follow directly from  the proof of Proposition \ref{PROP:NL1}.
However, due to the use of the $Z_\ta$-norm,
we need to make modifications
in several cases.

\begin{proof}[Proof of Lemma \ref{LEM:NL2}]

As in the proof of Proposition \ref{PROP:NL1},
we need to estimate the right-hand side of \eqref{nl1}
by performing a case-by-case analysis of expressions of the form:
\begin{align}
\bigg| \iint_{I\times \R^d}
\jb{\nb}^\frac{d-2}{2} ( w_1 w_2 w_3 )v_4 dx dt\bigg|
\label{Znl1}
\end{align}

\noi
where $\|v_4\|_{Y^{0}(I)} \leq 1$
and $w_j=  v$ or $f$,  $j = 1, 2, 3$.
Before proceeding further, let us simplify some of the notations.
In the following,
as before, we drop the complex conjugate sign
and denote
$X^s(I)$ and $Y^s(I)$ by $X^s$ and $Y^s$.
Lastly,
we dyadically decompose
 $w_j$, $j = 1, 2, 3$,
and $v_4$ such that their spatial frequency supports are $\{ |\xi_j|\sim N_j\}$
for some dyadic $N_j \geq 1$
but still denote them as $w_j = v_j$ or $f_j$, $j = 1, 2, 3$, and $v_4$
if there is no confusion.

\medskip
\noi
{\bf  Case (1):} $v v v$ case.

Without loss of generality, assume that $N_1\geq N_2, N_3$.

\smallskip

\noi
{\bf $\bullet$  Subcase (1.a):} $N_1 \sim N_4$.

By Lemma \ref{LEM:Z1}, we have
\begin{align*}
\bigg|\iint_{ I\times \R^d} & \jb{\nb}^\frac{d-2}{2} v_1 v_2  v_3 v_4 dx dt \bigg|
 \les
\sum_{N_1\sim  N_4\ges N_2, N_3}
N_1^\frac{d-2}{2}
\|\P_{N_1} v_1\P_{N_3} v_3\|_{L^2_{t, x}}
\|\P_{N_2}v_2\P_{N_4} v_4\|_{L^2_{t, x}}\\
& \les \sum_{N_1, \dots, N_4}
\bigg(\frac{N_2N_3}{N_1N_4}\bigg)^{\frac 12(1-\ta)-}
\|\P_{N_1} v_1\|_{Y^\frac{d-2}{2}}\|\P_{N_2}v_2\|_{Z_\ta}
\|\P_{N_3}v_3\|_{Z_\ta}
\|\P_{N_4}v_4\|_{Y^0}
\intertext{By first summing over $N_2, N_3 \leq N_1$
and then applying  Cauchy-Schwarz inequality
in summing over $N_1 \sim N_4$, we have}
& \les
\|v_1\|_{X^\frac{d-2}{2}(I)}\|v_2\|_{Z_\ta(I)} \|v_3\|_{Z_\ta(I)}.
\end{align*}

\noi
{\bf $\bullet$  Subcase (1.b):} $N_1 \sim N_2 \gg N_4$.

By Lemma \ref{LEM:Z1}  and \eqref{Ys3} in  Lemma \ref{LEM:Ys1}, we have
\begin{align*}
\bigg|  \iint_{ I\times \R^d}  & \jb{\nb}^\frac{d-2}{2} v_1 v_2  v_3 v_4 dx dt \bigg|
 \les
\sum_{N_1 \sim N_2 \geq N_3,  N_4}
N_1^\frac{d-2}{2}
\|\P_{N_1} v_1\P_{N_3} v_3\|_{L^2_{t, x}}
\|\P_{N_2}v_2\P_{N_4} v_4\|_{L^2_{t, x}}\\
& \les \sum_{N_1 \sim N_2 \geq N_3,  N_4}
\bigg(\frac{N_3}{N_1}\bigg)^{\frac 12(1-\ta)-}
\|\P_{N_1} v_1\|_{Y^\frac{d-2}{2}}
\|\P_{N_3}v_3\|_{Z_\ta}
\|\P_{N_2}v_2\|_{L^4_{t, x}}
\|\P_{N_4}v_4\|_{L^4_{t, x}}\\
& \les
\sum_{N_1  \sim N_2 \geq N_3,  N_4}
\bigg(\frac{N_3}{N_1}\bigg)^{\frac 12(1-\ta)-}
\bigg(\frac{N_4}{N_2}\bigg)^\frac{d-2}{4}
\|\P_{N_1} v_1\|_{Y^\frac{d-2}{2}}
\|\P_{N_3}v_3\|_{Z_\ta}\\
& \hphantom{XXXXXXXXXXXXXXXX}
\times
N_2^\frac{d-2}{4} \|\P_{N_2}v_2\|_{L^4_{t, x}}
\|\P_{N_4}v_4\|_{Y^0}
\intertext{Summing over $N_3$
and taking a supremum in $N_2$,}
& \les
\|v_2\|_{Z}\|v_3\|_{Z_\ta}
\sum_{N_1 \gg  N_4}
\bigg(\frac{N_4}{N_1}\bigg)^\frac{d-2}{4}
\|\P_{N_1} v_1\|_{Y^\frac{d-2}{2}}
\|\P_{N_4}v_4\|_{Y^0}
\intertext{By Lemma \ref{LEM:Schur}, we have}
& \les
\|v_1\|_{Y^\frac{d-2}{2}}
\|v_2\|_{Z_\ta} \|v_3\|_{Z_\ta} \|v_4\|_{Y^0}
\les
 \|v_1\|_{X^\frac{d-2}{2}(I)}
\|v_2\|_{Z_\ta(I)}\|v_3\|_{Z_\ta(I)}.
\end{align*}


In the following,
 the desired estimates follow from the corresponding cases in the proof of Proposition \ref{PROP:NL1}.
Hence,  we just state the results.

\smallskip

\noi
{\bf Case (2):} $fff$ case. \quad

Without loss of generality, assume $N_3 \geq N_2 \geq N_1$.

\smallskip

\noi
{\bf $\bullet$  Subcase (2.a):} $N_2 \sim N_3$.

The contribution to \eqref{Znl1} in this case is at most
\[ \les
\|f\|_{L^{d+2}_{t, x}}^2 \|\jb{\nb}^{\frac{d-2}{4}+}f\|_{L^{4}_{t, x}}^2
\leq
 \|f\|^3_{W^s(I)}\]

\noi
as long as $s > \frac{d-2}{4}$.

\noi
{\bf $\bullet$  Subcase (2.b):} $N_3 \sim N_4 \gg N_1, N_2$.

\smallskip

\noi
$\circ$ \underline{Subsubcase (2.b.i):} $N_1, N_2 \ll N_3^\frac{1}{d-1}$.

The contribution to \eqref{Znl1} in this case is at most
\[ \les
\|f\|_{Y^s}^{3-2\al} \|\jb{\nb}^s f\|_{L^4_{t, x} }^{2\al}
\leq
\|f\|_{Y^s(I)}^{3-2\al}
 \|f\|_{W^s(I)}^{2\al}\]

\noi
as long as
\eqref{nl3} is satisfied
and $\al < 1-  \frac{2}{d-1} s$.

\smallskip

\noi
$\circ$ \underline{Subsubcase (2.b.ii):} $N_2\ges N_3^\frac{1}{d-1} \gg N_1$.

The contribution to \eqref{Znl1} in this case is at most
\[ \les \|f \|_{Y^s(I)} \|\jb{\nb}^s f\|_{L^4_{t, x}}^2
\leq \|f \|_{Y^s(I)}\|f\|_{W^s(I)}^2\]

\noi
as long as
\eqref{nl3} is satisfied.

\smallskip

\noi
$\circ$ \underline{Subsubcase (2.b.iii):} $N_1, N_2\ges N_3^\frac{1}{d-1} $.

The contribution to \eqref{Znl1} in this case is at most
\[ \les
\| \jb{\nb}^s f\|_{L^\frac{6(d+2)}{d+4}_{t, x}}^3
\leq  \|f\|_{W^s(I)}^3\]

\noi
as long as
\eqref{nl3} is satisfied.

\medskip

\noi
{\bf Case (3):} $v v f$ case.

Without loss of generality,   assume $N_1 \geq N_2$.

\medskip

\noi
{\bf $\bullet$  Subcase (3.a):} $N_1 \ges N_3$.

The contribution to \eqref{Znl1} in this case is at most
\[ \les
\|v\|_{X^\frac{d-2}{2}}^2
 \| \jb{\nb}^s f\|_{L^{d+2}_{t, x}}
\leq  \|v\|_{X^\frac{d-2}{2}(I)}^2
\|f\|_{W^s(I)}\]

\noi
as long as $s > 0$.

\medskip

\noi
{\bf $\bullet$  Subcase (3.b):} $N_3\sim N_4 \gg N_1 \geq N_2$.

\smallskip

\noi
$\circ$ \underline{Subsubcase (3.b.i):} $N_1\ges  N_3^\frac 1{d-1}$.

The contribution to \eqref{Znl1} in this case is at most
\[ \les
\|v\|_{X^\frac{d-2}{2}}^2
 \| \jb{\nb}^s f\|_{L^{d+2}_{t, x}}
\leq  \|v\|_{X^\frac{d-2}{2}(I)}^2
\|f\|_{W^s(I)}\]

\noi
as long as
\eqref{nl4} is satisfied.

\smallskip

\noi
$\circ$ \underline{Subsubcase (3.b.ii):} $N_2 \leq N_1\ll  N_3^\frac 1{d-1}$.

The contribution to \eqref{Znl1} in this case is at most
\[ \les
\|v\|_{X^\frac{d-2}{2}}^2
 \| f\|_{Y^s}^{1-\al} \| \jb{\nb}^s f\|_{L^{d+2}_{t, x}}^\al
\leq  \|v\|_{X^\frac{d-2}{2}(I)}^2
\| f\|_{Y^s(I)}^{1-\al} \|f\|_{W^s(I)}^\al
\]

\noi
as long as
\eqref{nl4} is satisfied.

\medskip

\noi
{\bf Case (4):} $v f f $ case.

Without loss of generality, assume $N_3 \geq N_2$.

\medskip

\noi
{\bf $\bullet$  Subcase (4.a):} $N_1 \ges N_3$.

The contribution to \eqref{Znl1} in this case is at most
\[ \les
\|v\|_{X^{\frac{d-2}{2}}}
\|f\|_{L^{d+2}_{t, x}}^2
\leq
\|v\|_{X^{\frac{d-2}{2}}(I)}
 \|f\|_{W^s(I)}^2\]

\noi
as long as $s> 0$.

\medskip

\noi
{\bf $\bullet$  Subcase (4.b):} $N_3 \gg N_1$.

First, suppose that $N_2 \sim N_3$.
Then,
the contribution to \eqref{Znl1} in this case is at most
\[ \les
\|v\|_{X^{\frac{d-2}{2}}}
\|\jb{\nb}^s f\|_{L^4_{t, x}}^2
\leq
\|v\|_{X^{\frac{d-2}{2}}(I)}
 \|f\|_{W^s(I)}^2\]

\noi
as long as $ s> \frac {d-2}4$.

Hence, it remains to consider the case $N_3 \sim N_4 \gg N_1, N_2$.

\smallskip

\noi
$\circ$ \underline{Subsubcase (4.b.i):} $N_1, N_2\ll N_3^\frac 1{d-1}$.

The contribution to \eqref{Znl1} in this case is at most
\[ \les
\|v\|_{X^{\frac{d-2}{2}}}
\|f\|_{Y^s}^{2-2\al} \|\jb{\nb}^s f\|_{L^4_{t, x} }^{2\al}
\leq
\|v\|_{X^{\frac{d-2}{2}}(I)}
\|f\|_{Y^s(I)}^{2-2\al}
 \|f\|_{W^s(I)}^{2\al}\]

\noi
as long as \eqref{nl5} is satisfied and $\al < 1 - \frac{2}{d-1}s$.

\smallskip

\noi
$\circ$ \underline{Subsubcase (4.b.ii):} $N_1\ll N_3^{\frac 1{d-1}} \les N_2$.

The contribution to \eqref{Znl1} in this case is at most
\[ \les
\|v\|_{X^{\frac{d-2}{2}}}
\|\jb{\nb}^s f\|_{L^4_{t, x}}^2
\leq
\|v\|_{X^{\frac{d-2}{2}}(I)}
 \|f\|_{W^s(I)}^2\]

\noi
as long as
 \eqref{nl5} is satisfied.

\smallskip

\noi
$\circ$ \underline{Subsubcase (4.b.iii):} $N_2 \ll N_3^{\frac 1{d-1}} \les N_1$.

The contribution to \eqref{Znl1} in this case is at most
\[ \les
\|v\|_{X^{\frac{d-2}{2}}}
\|f\|_{Y^s}
\|\jb{\nb}^s f\|_{L^{d+2}_{t, x}}
\leq
\|v\|_{X^{\frac{d-2}{2}}(I)}
\|f\|_{Y^s(I)}
 \|f\|_{W^s(I)}\]

\noi
as long as \eqref{nl5} is satisfied.

\smallskip

\noi
$\circ$ \underline{Subsubcase (4.b.iv):} $N_1, N_2\ges N_3^{\frac 1{d-1}}$

The contribution to \eqref{Znl1} in this case is at most
\[ \les
\|v\|_{X^{\frac{d-2}{2}}}
\|\jb{\nb}^s f\|_{L^{d+2}_{t, x}}^2
\leq
\|v\|_{X^{\frac{d-2}{2}}(I)}
 \|f\|_{W^s(I)}^2\]

\noi
as long as \eqref{nl5} is satisfied.
\end{proof}

\section{Long time existence of solutions to the perturbed NLS}
\label{SEC:7}

The main goal of this section is to establish long time existence of solutions
to the perturbed NLS \eqref{ZNLS1} under some assumptions.
See Proposition \ref{PROP:perturb2}.
We achieve this goal by iteratively applying the perturbation lemma
(Lemma \ref{LEM:perturb}) for the energy-critical NLS.

We first state the perturbation lemma for the energy-critical cubic NLS
involving the $X^\frac{d-2}{2}$- and  the $Z$-norms.
See
\cite{CKSTT,  TV, TVZ, KVZurich} for perturbation and stability results
on usual Strichartz and Lebesgue spaces.
In the context of the cubic NLS on $\R\times \T^3$,
Ionescu-Pausader \cite{IP} proved a perturbation lemma
involving the critical $X^{s_\textup{crit}}$-norm.
Our proof essentially follows their argument
and is included for the sake of completeness.

\begin{lemma}[Perturbation lemma]\label{LEM:perturb}
Let $d \geq 3$ and $I$ be
a compact interval
with $|I|\leq 1$.
Suppose that $v \in C(I; H^\frac{d-2}{2}(\R^d))$ satisfies the following perturbed NLS:
\begin{align}\label{PNLS1}
i \dt v + \Dl v = |v|^2 v + e,
\end{align}

\noi
satisfying
\begin{align}
\| v\|_{Z(I)} + \|v\|_{L^\infty(I; H^\frac{d-2}{2}(\R^d))} \leq R
\label{PP1}
\end{align}

\noi
for some $R \geq 1$.
Then, there exists $\eps_0 = \eps_0(R) > 0$
such that
if
we have
\begin{align}
\|u_0 - v(t_0) \|_{H^\frac{d-2}{2}(\R^d)}
+
\|e\|_{N^\frac{d-2}{2}(I)} \leq \eps
\label{PP2}
\end{align}

\noi
for
some  $u_0 \in H^\frac{d-2}{2}(\R^d)$,
some $t_0 \in I$, and some $\eps < \eps_0$, then
there exists a solution
$u \in X^\frac{d-2}{2}(I)\cap
C(I; H^\frac{d-2}{2}(\R^d))$
to  the defocusing cubic NLS  \eqref{NLS1}
with $u(t_0) = u_0$
such that
\begin{align}
\|u\|_{X^\frac{d-2}{2}(I)}+\|v\|_{X^\frac{d-2}{2}(I)} & \leq C(R), \\
\|u - v\|_{X^\frac{d-2}{2}(I)} & \leq C(R)\eps,
\label{PP3}
\end{align}

\noi
where $C(R)$ is a non-decreasing function of $R$.
\end{lemma}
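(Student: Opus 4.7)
The plan is to reduce the lemma to the trilinear estimate of Lemma \ref{LEM:NL2} (with $f\equiv 0$) applied to $w:=u-v$, after first partitioning $I$ into finitely many subintervals on each of which $v$ is small in the $Z$-norm. Since $\|v\|_{Z(I)}\leq R$ and the $Z$-norm has an $\ell^4$-in-frequency structure, I can partition $I=\bigsqcup_{k=0}^{K-1}I_k$ with $I_k=[t_k,t_{k+1}]$ and
\[
\|v\|_{Z(I_k)}\leq \eta,\qquad k=0,\dots,K-1,
\]
at a cost of $K\lesssim (R/\eta)^4$ subintervals, for a small $\eta=\eta(R)>0$ to be chosen. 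Fix some $\theta\in(\tfrac12,1)$ as in Proposition \ref{PROP:LWP2}.

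First I would upgrade the hypothesis \eqref{PP1} to a genuine $X^{\frac{d-2}{2}}$-bound on $v$. On each $I_k$, Lemma \ref{LEM:Ys1}(i), the $f\equiv 0$ case of Lemma \ref{LEM:NL2}, and the interpolation $\|v\|_{Z_\theta(I_k)}\leq \eta^\theta\|v\|_{X^{\frac{d-2}{2}}(I_k)}^{1-\theta}$ give
\[
\|v\|_{X^{\frac{d-2}{2}}(I_k)}\leq \|v(t_k)\|_{H^{\frac{d-2}{2}}}+C\eta^{2\theta}\|v\|_{X^{\frac{d-2}{2}}(I_k)}^{3-2\theta}+\|e\|_{N^{\frac{d-2}{2}}(I_k)}.
\]
For $\eta$ small enough relative to $R$, the standard continuity argument yields $\|v\|_{X^{\frac{d-2}{2}}(I_k)}\leq 2R$, and summing over $k$ produces $\|v\|_{X^{\frac{d-2}{2}}(I)}\leq C(R)$.

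Now set $w:=u-v$, which formally satisfies
\[
i\partial_t w+\Delta w=\bigl(\N(w+v)-\N(v)\bigr)-e,\qquad w(t_0)=u_0-v(t_0),
\]
with $\N(w+v)-\N(v)$ expanding schematically into $O(v^2 w)+O(vw^2)+O(w^3)$. Applying Lemma \ref{LEM:NL2} (with $f\equiv 0$) to each monomial, using $\|v\|_{Z_\theta(I_k)}\lesssim \eta^\theta R^{1-\theta}$ together with the $X^{\frac{d-2}{2}}$-bound on $v$ from the previous step, yields on each $I_k$ a Duhamel estimate of the form
\[
\|w\|_{X^{\frac{d-2}{2}}(I_k)}\leq C(R)\bigl(\|w(t_k)\|_{H^{\frac{d-2}{2}}}+\|e\|_{N^{\frac{d-2}{2}}(I_k)}\bigr)+C(R)\eta^{\theta}\|w\|_{X^{\frac{d-2}{2}}(I_k)}+C(R)\|w\|_{X^{\frac{d-2}{2}}(I_k)}^{2}.
\]
With $\eta$ small enough to absorb the linear-in-$w$ term, a contraction on a small ball of radius $\sim C(R)\bigl(\|w(t_k)\|_{H^{\frac{d-2}{2}}}+\|e\|_{N^{\frac{d-2}{2}}(I_k)}\bigr)$ both constructs $u$ on $I_k$ and gives $\|w\|_{X^{\frac{d-2}{2}}(I_k)}\leq 2C(R)\bigl(\|w(t_k)\|_{H^{\frac{d-2}{2}}}+\|e\|_{N^{\frac{d-2}{2}}(I_k)}\bigr)$.

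Finally, I would iterate across the $K=K(R)$ subintervals: since each step multiplies the defect at $t_k$ by a fixed factor depending only on $R$, after $K$ steps the accumulated constant is still a finite function of $R$, and choosing $\eps_0=\eps_0(R)$ small enough so that $C(R)^{K}\eps_0$ stays inside the contraction radius throughout yields \eqref{PP3}. The main obstacle is the simultaneous choice of $\eta=\eta(R)$: it must be small enough to close both the $v$-bootstrap (so that $\|v\|_{X^{\frac{d-2}{2}}}$ is controlled purely by $R$) and the $w$-contraction (so that the linear-in-$w$ term is absorbed), while the resulting $K(R,\eta)$ remains finite so that the cumulative constant after iteration depends only on $R$; the careful bookkeeping among the three norms $Z$, $Z_\theta$, and $X^{\frac{d-2}{2}}$ is the most delicate part of the argument.
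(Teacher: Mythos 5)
Your proposal is correct and follows essentially the same strategy as the paper's proof: partition $I$ into $O\big((R/\eta)^4\big)$ subintervals where $\|v\|_{Z}$ is small, upgrade to an $X^{\frac{d-2}{2}}$-bound on $v$ on each subinterval, close a perturbative estimate for $w=u-v$ via Lemma \ref{LEM:NL2} with $f\equiv 0$, and iterate, with the geometric loss per step controlled because the number of subintervals depends only on $R$. The one point to make rigorous is the a priori finiteness of $\|v\|_{X^{\frac{d-2}{2}}(I_k)}$ needed to launch your continuity argument; the paper handles this by first proving local existence for the perturbed equation on intervals where the linear evolution $S(t-t_k)v(t_k)$ is small in $Z_\theta$ (its Steps 1--2), which is the careful version of your bootstrap.
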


\begin{proof}
Without loss of generality,  we assume $t_0=\min I$.
Given small $\eps_1 = \eps_1(R)>0$ (to be chosen later),
we divide the interval $I$ into   subintervals $I_j = [t_j, t_{j+1}]$ such that  $I = \bigcup_{j = 0}^{L} I_j$.
By choosing
$L\sim \big(\frac{R}{\eps_1}\big)^4$, we can guarantee that
\begin{equation}
\|v\|_{Z(I_j)} \leq \eps_1
\label{PP3y}
\end{equation}

\noi
for $j = 0, \dots, L$.
By assumption, we also have
\begin{equation}
\|e\|_{N^\frac{d-2}{2}(I_j)} \leq \eps < \eps_0
\label{PP3x}
\end{equation}

\noi
\noi
for $j = 0, \dots, L$.

\smallskip

\noi
$\bullet$  {\bf Step 1:}
Let $\ta \in (\frac 12, 1)$.
We first claim that
there exist $\eta_0= \eta_0(R) >0$
and $\eps_0= \eps_0(R)>0$
such that if
\begin{equation}
 \|S(t - t_*) v(t_*)\|_{Z_\theta(J)} \leq \eta_0
\qquad\text{and}
\qquad
\|e\|_{N^\frac{d-2}{2}(J)} \leq \eps_0
\label{PP3b}
\end{equation}

\noi
for some $t_*$ in a subinterval  $J \subset I$, then
there exists a unique solution $v$ to \eqref{PNLS1} on $J$,
satisfying
\begin{equation}
 \|v - S(t - t_*) v(t_*) \|_{X^\frac{d-2}{2}(J)}
\leq C \|S(t - t_*) v(t_*)\|_{Z_\theta(J)}^{3-2\ta} + 2 \|e\|_{N^\frac{d-2}{2}(J)}.
\label{PP3a}
\end{equation}

We choose $\eta_0 = \eta_0(R)$ and $\eps_0 = \eps_0(R)$ such that
\begin{align}\label{PP4a}
\eta_0  \ll  R^{-\frac{1}{2\ta -1}}
\qquad \text{and}
\qquad
\eps_0 \ll  R^{-\frac{2}{2\ta -1}}.
\end{align}

\noi
In the following,
we set
\[\eta :=  \|S(t - t_*) v(t_*)\|_{Z_\theta(J)} \leq \eta_0
\qquad
\text{and}
\quad
\eps: = \|e\|_{N^\frac{d-2}{2}(J)} \leq \eps_0.\]

\noi
Then, proceeding as in the proof of Proposition \ref{PROP:LWP2},
we show that the map $\G$ defined by
\begin{equation}
\G v(t) := S(t-t_*) v (t_*) -i  \int_{t_*}^t S(t -t')\N(v)(t') dt' -i  \int_{t_*}^t S(t -t')e(t') dt'.
\label{PP4}
\end{equation}

\noi is a contraction on
\[ B_{R,  \eta, \eps} = \big\{ v \in X^\frac{d-2}{2}(J)\cap C(J; H^\frac{d-2}{2}):
 \, \|v\|_{X^\frac{d-2}{2}(J)}\leq 2 R,
\ \|v\|_{Z_\ta(J)} \leq 2 (\eta+ \eps^{\frac{\ta}{2}+\frac 14})\big\}. \]

Indeed, by
Lemma \ref{LEM:NL2} (with $f = 0$),
we have
\begin{align}
  \|\G v\|_{X^\frac{d-2}{2}(J)}
& \leq \|v(t_\ast)\|_{H^\frac{d-2}{2}} + C
(\eta+\eps^{\frac{\ta}{2}+\frac 14})^2  R
+ \eps \notag \\
&
\leq \|v(t_\ast)\|_{H^\frac{d-2}{2}} + C
\eta^2 R+ 2\eps
\leq 2  R,
\label{PP5}
\end{align}

\noi
and
\begin{align*}
  \| \G v_1 - \G v_2 \|_{X^\frac{d-2}{2}(J)}
  & \leq C(\eta + \eps^{\frac{\ta}{2}+\frac 14})R
\| v_1 - v_2 \|_{X^\frac{d-2}{2}(J)}
  \leq \frac 12
\| v_1 - v_2 \|_{X^\frac{d-2}{2}(J)}, \notag
\end{align*}

\noi
for $v, v_1, v_2 \in B_{R,  \eta, \eps}$.
Moreover, we have
\begin{align*}
 \|\G v\|_{Z_\ta(J)}
 & \leq
\big(\|S(t-t_\ast) v(t_\ast)\|_{Z(J)} +
C \eta^2  R+ \eps\big)^{\ta}\\
& \hphantom{XXXXXXX}
\times
\big(\|S(t-t_\ast) v(t_\ast)\|_{X^\frac{d-2}{2}(J)} +
C \eta^2 R + \eps \big)^{1-\ta}\notag \\
& \leq  \eta+  C \eta^{2\ta} R
+ C \eta^{2-\ta} R^{1-\ta}
+ C\eta^\theta\eps^{1-\theta}
+ C\eps^\ta R^{1-\ta}
\leq 2   (\eta+ \eps^{\frac{\ta}{2}+\frac 14}),
\end{align*}

\noi
for $v \in B_{R,  \eta, \eps}$.
Hence,
$\G$ is a contraction on $B_{R, \eta_1}$.	
The estimate \eqref{PP3a}
follows from \eqref{PP4a}
and \eqref{PP5}.

\smallskip

\noi
$\bullet$  {\bf Step 2:}
Next, we claim that, given $\eps_2 > 0$,
we can choose $\eps_j = \eps_j(R, \eps_2)$, $j = 0, 1$,   in \eqref{PP3x} and \eqref{PP3y}
sufficiently small such that
we have
\begin{align}
\|S(t - t_j) v(t_j) \|_{Z_\ta(I_j)}
& \leq \eps_2
\qquad \text{and}
\qquad
\|v \|_{Z_\ta(I_j)}
 \leq \eps_2.
\label{PP7}
\end{align}

Without loss of generality, assume
$\eps_2 \leq \frac{\eta_0}{2}$,
where $\eta_0 = \eta_0(R)$ is as in Step 1.
Let $h(\tau) = \|S(t - t_j) v(t_j) \|_{Z_\ta([t_j, t_j + \tau])} $.
Note that $h$ is continuous and $h(0) = 0$.
Thus, we have $h(\tau) \leq 2\eps_2 \leq \eta_0$
for small $\tau > 0$.
Then, from the Duhamel formula \eqref{PP4}
with \eqref{PP3y}, \eqref{PP3x}, and \eqref{PP3a}, we have
\begin{align}
h(\tau)
& \leq
 \|S(t - t_j) v(t_j) \|_{X^\frac{d-2}{2}([t_j, t_j + \tau])}^{1-\ta}
 \|S(t - t_j) v(t_j) \|_{Z([t_j, t_j + \tau])}^{\ta} \notag \\
& \leq
C R^{1-\ta}\big(\eps_1 + \eps_2^{3-2\ta} + \|e\|_{N^\frac{d-2}{2}(I_j)}\big)^\ta \notag \\
& \leq
C R^{1-\ta} \eps_2^{\ta(3-2\ta)}
+ CR^{1-\ta}\big(\eps_1 +  \|e\|_{N^\frac{d-2}{2}(I_j)}\big)^\ta
\label{PP8}
\end{align}

\noi
From \eqref{PP4a} with $\eps_2 \leq  \frac{\eta_0}{2}$, we have
\begin{equation}
CR^{1-\theta}\eps_2^{\theta(3-2\theta)}\leq C\big
(R\eta_0^{(2\theta-1)}\big)^{1-\theta}\eps_2\ll\eps_2.
\label{PP8a}
\end{equation}

\noi
Hence, it follows from
\eqref{PP8} and \eqref{PP8a} that
\begin{align}
h(\tau)
 & \leq  \tfrac 12 \eps_2 + C R^{1-\theta}(\eps_1 + \eps_0 )^{\theta}
 \leq \eps_2
\label{PP9}
\end{align}

\noi
by choosing $\eps_j = \eps_j(R,\eps_2)>0$ sufficiently small, $j = 0, 1$.
Then, by the continuity argument, we see that
\eqref{PP9} holds for all $\tau \leq t_{j+1} - t_j$.
From Step 1 and \eqref{PP3y}, we have
\begin{equation} \|v\|_{Z_\ta(I_j)}  = \|v\|_{Z(I_j)}^\ta \|v\|_{X^\frac{d-2}{2}(I_j)}^{1-\ta}
\leq C \eps_1^\ta R^{1-\ta}.
\label{PP9a}
\end{equation}

\noi
Therefore,  \eqref{PP7} follows from \eqref{PP9} and \eqref{PP9a},
by choosing $\eps_1 = \eps_1(R, \eps_2)$ smaller if necessary.

\smallskip

\noi
$\bullet$  {\bf Step 3:}
Given $\eps_2 = \eps_2(R) >0$ (to be chosen later),
it follows from Step 2 that \eqref{PP7} holds
as long as
$\eps_ j= \eps_j(R)>0$ $j = 0, 1$, are sufficiently small.
From Step 1 with \eqref{PP3x}, \eqref{PP7},  and \eqref{PP5}, we have
\begin{align}
\|v\|_{X^\frac{d-2}{2}(I_j)} \leq 2R
\label{PP10}
\end{align}

\noi
as long as $\eps_j  = \eps_j(R)>0$, $j = 0, 1, 2$, are sufficiently small.

Let $u$ be a solution to the defocusing cubic NLS \eqref{NLS1}
with initial data $u(t_j)$ given at $t = t_j$
such that
\begin{align}
\| u(t_j) - v(t_j) \|_{H^\frac{d-2}{2}} \leq \eps < \eps_0.
\label{PP10a}
\end{align}

\noi
Let $J_j = [t_j, t_j + \tau] \subset I_j$ be the maximal time interval such that
\begin{equation}
\| u - v\|_{Z_\ta(J_j)} \leq 6C_0\eps,
\label{PP11}
\end{equation}

\noi
where $C_0$ is as in \eqref{Z0a}.
Such an interval exists and is non-empty,
since $\tau \mapsto \| u - v\|_{Z_\ta(t_j, t_j +  \tau )}$
is finite and continuous  (see Lemma \ref{LEM:U6}), at least  on the interval of local existence of $u$, 
and vanishes for $\tau = 0$.

Let $w : = u-v$.
By Lemma \ref{LEM:NL2} (with $f = 0$)
with \eqref{PP3x}, \eqref{PP7}, \eqref{PP10}, \eqref{PP10a}, and \eqref{PP11}, we have
\begin{align}
\|w\|_{X^\frac{d-2}{2}(J_j)}
& \leq \| u(t_j) - v(t_j)\|_{H^\frac{d-2}{2}}
+ C_1\Big\{
\|v\|_{X^\frac{d-2}{2}(J_j)}\|v\|_{Z_\ta(J_j)}\|w\|_{X^\frac{d-2}{2}(J_j)} \notag \\
& \hphantom{XXXXX}
 + \|v\|_{X^\frac{d-2}{2}(J_j)}\|w\|_{X^\frac{d-2}{2}(J_j)}\|w\|_{Z_\ta(J_j)} \notag \\
& \hphantom{XXXXX}
 + \|w\|_{X^\frac{d-2}{2}(J_j)}\|w\|_{Z_\ta(J_j)}^2\Big\}
+ \|e\|_{N^\frac{d-2}{2}(J_j)}\notag\\
& \leq 2 \eps + C_2 (\eps_0 + \eps_2) R \|w\|_{X^\frac{d-2}{2}(J_j)}. \notag
\end{align}

\noi
Taking  $\eps_j = \eps_j(R)>0$ sufficiently small, $j = 0, 2$,  such that
 $(\eps_0+\eps_2) R\ll 1$,
we obtain
\begin{align}
\|w\|_{X^\frac{d-2}{2}(J_j)}
 \leq 4 \eps.
\label{PP12}
\end{align}

\noi
Hence, from \eqref{Z0a}, we have
\begin{align}
\|w\|_{Z_\ta(J_j)} \leq C_0 \|w\|_{X^\frac{d-2}{2}(J_j)}
 \leq 4 C_0 \eps. \label{PP13}
\end{align}

From \eqref{PP10} and \eqref{PP12},
we have
$\|u\|_{X^\frac{d-2}{2}(J_j)} \leq 3R<\infty$.
Then,  from \eqref{Ys2},
we have
$\|u\|_{\dot{S}^{\frac{d-2}{2}}(J_j)} < \infty$.
In particular,
this implies that
$u$ can be extended to some larger interval $ J' \supset \cj {J_j}$.
Therefore, in view of \eqref{PP11} and \eqref{PP13},
we can apply the continuity argument and conclude that $J_j = I_j$.

\smallskip

\noi
$\bullet$  {\bf Step 4:}
By \eqref{PP2},
we have  $\|u(t_0)-v(t_0)\|_{H^{\frac{d-2}{2}}}\leq \eps$
for some $\eps < \eps_0$.
Then, by Step 3, we have $\|w\|_{X^{\frac{d-2}{2}}(I_0)}\leq 4\eps$
on $I_0 = [t_0, t_1]$. In particular, this yields
\[\|u(t_1)-v(t_1)\|_{H^{\frac{d-2}{2}}}\leq 4C\eps.\]

\noi
Then, we can apply  Step 3 on the interval $I_1$
by choosing $\eps_0$ (and hence $\eps)$ even smaller.
We argue recursively for each  interval $I_j$, $j=2,\dots, L$.
Note that,   at each step,
we make $\eps_0$ smaller by a factor of $(4C)^{-1}$.
Since
 $L\sim\big(\frac{R}{\eps_1}\big)^4$ and $\eps_1 = \eps_1 (R)$,
there are  a finite number of iterative steps depending only on $R$.
This allows us to choose new  $\eps_0 = \eps_0(R)> 0$
such that,   by  Lemma \ref{LEM:U2}, we have
\begin{align*}
\|u\|_{X^{\frac{d-2}{2}}(I)}+\|v\|_{X^{\frac{d-2}{2}}(I)}
&\lesssim L R \lesssim C(R), \\
\|u - v\|_{X^\frac{d-2}{2}(I)} &  \les L\eps \lesssim  C(R)\eps,
\end{align*}

\noi
 This completes the proof of Lemma \ref{LEM:perturb}.
\end{proof}

In the remaining part of this section,
we consider  long time existence of
solutions to the perturbed NLS \eqref{ZNLS1}
under several assumptions.
Given $T>0$,
we assume that
there exist $\beta, C, M > 0$  such that
\begin{equation}
 \|f\|_{W^s(I)} \leq C |I|^{\be}
 \qquad \text{and}\qquad
 \|f\|_{Y^s([0, T])} \leq M
\label{P0}
 \end{equation}

\noi
for any interval $I \subset [0, T]$.
Then,  Proposition \ref{PROP:LWP2}
guarantees
existence of a solution to the perturbed NLS \eqref{ZNLS1},
at least for a short time.

\begin{proposition}\label{PROP:perturb2}
Let $d \geq 3$.
Let $s \in (s_d,  s_\textup{crit}]$, where
$s_d$ is defined in \eqref{Sd1}.
Given $T>0$, assume the following conditions \textup{(i)} - \textup{(iii)}:
\begin{itemize}

\item[\textup{(i)}]
Hypothesis \textup{(B)} holds
if $d \ne 4$,

\item[\textup{(ii)}]
$f \in Y^s([0, T]) \cap W^s([0, T])$
satisfies \eqref{P0},

\item[\textup{(iii)}]
Given
a solution
$v$  to  \eqref{ZNLS1},
 the following a priori bound holds:
\begin{align}
\| v\|_{L^\infty([0, T]; H^\frac{d-2}{2}(\R^d))} \leq R
\label{P0a}
\end{align}

\noi
for some $R>0$.
\end{itemize}

\noi
Then,
there exists
$\tau = \tau(R,M, T, s, \be)>0$
such that,  given any $t_0 \in [0, T)$,
the solution $v$ to \eqref{ZNLS1}
exists on $[t_0, t_0 + \tau]\cap [0, T]$.
In particular, the condition \textup{(iii)} guarantees existence of $v$ on
the entire interval $[0, T]$.

\end{proposition}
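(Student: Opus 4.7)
The plan is to use Hypothesis (B) (or the theorem of Ryckman--Vi\c{s}an and Vi\c{s}an when $d = 4$) to obtain a global reference solution $w$ of the clean defocusing cubic NLS starting at $v(t_0)$, and to then iteratively apply Proposition \ref{PROP:LWP2} to extend $v$ across a partition of $[t_0, t_0 + \tau]$. The length $\tau$ of each extension will depend only on the stated parameters, so condition (iii) will allow us to reach the endpoint $T$.

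First, by Hypothesis (B) applied to $w(t_0) = v(t_0) \in H^{s_\textup{crit}}$ with $\|v(t_0)\|_{H^{s_\textup{crit}}} \leq R$, the clean cubic NLS produces $w$ on $[t_0, T]$ with $\|w\|_{L^{d+2}_{t,x}([t_0, T] \times \R^d)} \leq C(R, T)$. Standard Strichartz theory then upgrades this to
\[
\|w\|_{Z([t_0, T])} + \|w\|_{L^\infty_t H^{s_\textup{crit}}([t_0, T])} \leq K = K(R, T).
\]
Partition $[t_0, t_0 + \tau]$ into finitely many subintervals $I_j = [t_j, t_{j+1}]$, $j = 0, \dots, L - 1$, such that on each subinterval one has both
\[
\|w\|_{Z(I_j)} \leq \eta_1 \qquad \text{and} \qquad \|f\|_{W^s(I_j)} \leq \eta_1^{(4-\alpha)/\alpha}.
\]
The first condition is arranged by absolute continuity of $\|w\|_{Z([t_0, \cdot])}^4$ given the global bound $K$; the second uses the hypothesis $\|f\|_{W^s(I)} \leq C |I|^\beta$. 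The number $L$ of subintervals, and therefore $\tau$, depends only on $R, T, M, s, \beta$.

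Now proceed inductively on $j$. Suppose $v$ has been constructed on $[t_0, t_j]$ with the a priori control $\|v(t_j)\|_{H^{s_\textup{crit}}} \leq R$ from (iii) and the closeness $\|v(t_j) - w(t_j)\|_{H^{s_\textup{crit}}} \leq \delta_j$ for some small $\delta_j$. Write
\[
S(t - t_j) v(t_j) = w(t) + \bigl( S(t - t_j) w(t_j) - w(t) \bigr) + S(t - t_j)\bigl( v(t_j) - w(t_j) \bigr).
\]
The first term has small $Z_\theta(I_j)$-norm by interpolation and $\|w\|_{Z(I_j)} \leq \eta_1$; the second is the Duhamel correction coming from $|w|^2 w$, estimated in $X^{s_\textup{crit}}(I_j)$ by Lemma \ref{LEM:NL2} (with $f = 0$) and hence small; the third is controlled by a multiple of $\delta_j$. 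Provided $\eta_1$ and $\delta_j$ are sufficiently small relative to $\eta_0(R, M)$, Proposition \ref{PROP:LWP2} produces the unique extension of $v$ to $I_j$. The updated closeness $\|v(t_{j+1}) - w(t_{j+1})\|_{H^{s_\textup{crit}}} \leq \delta_{j+1}$ is then obtained from Lemma \ref{LEM:perturb}: regarding $v$ on $I_j$ as an approximate solution to the clean cubic NLS with error $e = |v + f|^2(v + f) - |v|^2 v$, which by Lemma \ref{LEM:NL2} is small in $N^{s_\textup{crit}}(I_j)$ thanks to the smallness of $\|f\|_{W^s(I_j)}$, the lemma constructs a clean NLS solution from $v(t_j)$ staying close to $v$ in $X^{s_\textup{crit}}(I_j)$; invoking Lemma \ref{LEM:perturb} once more with $e = 0$ transfers this closeness from the clean evolution of $v(t_j)$ over to $w$.

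The main obstacle is controlling the accumulation of the closeness defect $\delta_j$ across the $L$ iterations, since each application of Lemma \ref{LEM:perturb} can amplify the error by a factor $C(K, R)$. The resolution is to fix $\eta_1$ small enough so that, over $L = L(R, T, M, s, \beta)$ iterations, $\delta_L$ remains within the $\eta_0(R, M)$-smallness margin of Proposition \ref{PROP:LWP2}; this quantitative balance between the constants produced by Lemma \ref{LEM:perturb} and the $\beta$-H\"older control on $\|f\|_{W^s}$ is what ultimately pins down the value of $\tau = \tau(R, M, T, s, \beta)$.
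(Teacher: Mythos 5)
Your proposal is correct and follows essentially the same route as the paper: a global reference solution $w$ of the clean cubic NLS from Hypothesis (B) (automatic for $d=4$), a partition of the interval into $O_{R,T}(1)$ subintervals on which $\|w\|_{Z_\theta}$ and $\|f\|_{W^s}$ are small, iterative application of Proposition \ref{PROP:LWP2} together with Lemma \ref{LEM:perturb} applied to the error $e=|v+f|^2(v+f)-|v|^2v$, and a final bookkeeping of the multiplicatively accumulating closeness defect over the finitely many steps. The only cosmetic difference is your two-stage use of the perturbation lemma (first against the clean evolution of $v(t_j)$, then against $w$), which the paper short-circuits by invoking Lemma \ref{LEM:perturb} once per subinterval with $u_0=w(t_j)$ close to $v(t_j)$.
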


\begin{remark}\label{REM:perturb3}\rm
We point out  that the first condition in \eqref{P0} can be weakened as follows.
Let $\tau = \tau(R, M, T, s, \be) > 0$ be as in  Proposition \ref{PROP:perturb2}.
Then,
it follows from the proof of Proposition \ref{PROP:perturb2}
(see \eqref{P4a} and \eqref{P4b} below)
that
if we assume that
\begin{equation*}
 \|f\|_{W^s([t_0, t_0+\tau_*))} \leq C |\tau_*|^{\be}
 \end{equation*}

\noi
for some $\tau_* \leq \tau$ instead of the first condition in \eqref{P0},
then the conclusion of Proposition \ref{PROP:perturb2} still holds
on $[t_0, t_0 + \tau_*]\cap [0, T]$.
Indeed, we use this version of Proposition \ref{PROP:perturb2}
in Section \ref{SEC:8}.

\end{remark}

\begin{proof}

By setting $e = |v+f|^2 (v+f) - |v|^2 v$, \eqref{ZNLS1}
reduces to \eqref{PNLS1}.
In the following, we iteratively apply Lemma \ref{LEM:perturb}
on short intervals and
 show that there exists $\tau = \tau(R, M, T, s, \be) > 0$ such that
\eqref{PNLS1} is well-posed on $[t_0, t_0 + \tau] \cap [0, T] $ for any $t_0 \in [0, T)$.

Let $w$ be the global solution to the defocusing cubic NLS \eqref{NLS1}
with $w(t_0) = v(t_0) = v_0$.
By \eqref{P0a}, we  have $\|w(t_0) \|_{H^\frac{d-2}{2}} \leq R$.
Then,
by Hypothesis (B), we have
\[\| w\|_{L^{d+2}_{t, x}([0, T])} \leq  C(R, T) < \infty.\]

\noi
By the standard argument, this  implies that
$\big\||\nb|^\frac{d-2}{2}  w\big\|_{L^q_tL^r_x([0, T])} \leq C'(R, T) <  \infty$
for all Schr\"odinger admissible pairs $(q, r)$.
In particular, we have
$\|w\|_{Z([0, T])} \leq C''(R, T) <  \infty$
and
\begin{align}
\|w\|_{X^\frac{d-2}{2}([0, T])}
& \leq \|v_0\|_{H^\frac{d-2}{2}}
+ \big\||\nb|^\frac{d-2}{2} w\big\|_{L^\frac{2(d+2)}{d}_{t, x}([0, T])}
\|w\|_{L^{d+2}_{t, x}([0, T])}^2 \notag \\
& \leq C'''(R, T)
< \infty.
\label{P1}
\end{align}

Let $\ta \in (\frac 12, 1)$.
Given small $\eta > 0$ (to be chosen later),
we  divide the interval $ [t_0, T]$
into $J = J(R, T, \eta)$  many subintervals $I_j = [t_j, t_{j+1}]$
such that
\[ \|w\|_{Z_\ta(I_j)} \sim \eta.\]
	
\noi
In the following,
we fix the value of $\ta$ and
suppress dependence of various constants such as $\tau$ and $\eta$ on $\ta$.

Fix $\tau > 0$ (to be chosen later in terms of $R$, $M$, $T$, $s$, and $\be$)
and write $[t_0, t_0+\tau] = \bigcup_{j = 0}^{J'} \big([t_0, t_0+\tau]\cap I_j\big)$
for some $J'  \leq J - 1$, where $[t_0, t_0+\tau]\cap I_j \ne \emptyset$ for $0 \leq j \leq J'$
and $[t_0, t_0+\tau]\cap I_j=\emptyset$ for $j \geq J'$.

Since the nonlinear evolution $w$ is small on each $I_j$,
it follows that the linear evolution $S(t-t_j) w(t_j)$ is also small on each $I_j$.
Indeed, from the Duhamel formula, we have
\[S(t-t_j) w(t_j) = w(t) +i \int_{t_j}^t S(t - t') |w|^2w(t') dt'.\]

\noi
Then,  from Case (1) in the proof of Lemma \ref{LEM:NL2} with \eqref{P1},
we have
\begin{align*}
\|S(t-t_j) w(t_j) \|_{Z_\ta(I_j)}
&\leq \|w\|_{Z_\ta(I_j)} + C \|w\|_{X^\frac{d-2}{2}(I_j)} \|w\|_{Z_\ta(I_j)}^2
\leq \eta + C(R, T)  \eta^2.
\end{align*}
	
\noi
By taking $\eta = \eta(R, T)>0$ sufficiently small, we have
\begin{align}
\|S(t-t_j) w(t_j) \|_{Z_\ta(I_j)}
\leq 2 \eta
\label{P2}
\end{align}

\noi
for all $j = 0, \dots, J-1$.

Now, we estimate $v$ on the first interval $I_0$.
Let $\eta_0 = \eta_0(R, M)$ be as in
Proposition \ref{PROP:LWP2}.
Then,
by Lemma \ref{LEM:Ys1} (i),
\eqref{P0a}, and
Proposition \ref{PROP:LWP2},
we have
\begin{align*}
\|v\|_{X^\frac{d-2}{2} (I_0)}
& \leq \|S(t- t_0) v(t_0)\|_{X^\frac{d-2}{2}(I_0)}
+  \|v - S(t- t_0) v(t_0)\|_{X^\frac{d-2}{2}(I_0)} \notag \\
& \leq R + C\eta^{3-2\ta} \leq 2R,
\end{align*}

\noi
as long as
$2\eta < \eta_0$
and
 $\tau = \tau(\eta, \al, \be) = \tau(R,M, T,  \al, \be)> 0$ is sufficiently small
so that
\begin{align}
\|f\|_{W^s([t_0, t_0+ \tau))} \leq C\tau^\be \leq \eta^\frac{4-\al}{\al},
\label{P4a}
\end{align}

\noi
where $\al = \al(s)$  satisfy  \eqref{NL2a}.

Next, we estimate the error term.
By Lemma \ref{LEM:NL2} with \eqref{P0}, we have
\begin{align}
\|e\|_{N^\frac{d-2}{2}(I_0)}
\leq C(R, M) \tau^{\al\be}.
\label{P4b}
\end{align}

\noi
Given $\eps > 0$, we can choose
 $\tau = \tau(R, M, T,  \eps, \al, \be)>0$ sufficiently small
 so that
\begin{align*}
\|e\|_{N^\frac{d-2}{2}(I_0)}
\leq \eps.
\end{align*}

\noi
In particular, for $\eps < \eps_0$ with $\eps_0= \eps_0(R) > 0$
dictated by Lemma \ref{LEM:perturb},
the condition \eqref{PP2}
is satisfied on $I_0$.

Therefore,  all the conditions of Lemma \ref{LEM:perturb} are satisfied
on the first interval $I_0$,
provided that $\tau = \tau (R, M,  T, \eps, \al, \be)>0$ is chosen sufficiently small.
Hence, we obtain
\begin{align}
\| w - v\|_{X^\frac{d-2}{2}(I_0)} \leq C_0(R) \eps.
\label{P5}
\end{align}
	
\noi
In particular, we have
\begin{align}
\|w(t_1) - v(t_1) \|_{H^\frac{d-2}{2}}
\leq C_1(R) \eps.
\label{P6}
\end{align}

\noi
Then, from  \eqref{P2} and  Lemma \ref{LEM:Ys1} (i) with \eqref{P6},
we have
\begin{align*}
\|S(t - t_1) v(t_1)\|_{Z_\ta(I_1)}
&\leq \|S(t-t_1)w(t_1)\|_{Z_\ta(I_1)}+\|S(t-t_1)(w(t_1)-v(t_1))\|_{Z_\ta(I_1)}\\
&\leq 2 \eta + C'_1(R) \eps
\leq 3\eta
\end{align*}

\noi
by choosing $\eps = \eps(R, \eta) > 0$ sufficiently small.

Proceeding as before,
it follows from
 Proposition \ref{PROP:LWP2}
with \eqref{P0a} and  \eqref{P2} that
\begin{align*}
\|v\|_{X^\frac{d-2}{2} (I_1)}
& \leq R + C\eta^{3-2\ta} \leq 2R,
\end{align*}

\noi
as long as $3\eta \leq \eta_0$
and
 $\tau > 0$ is sufficiently small
so that \eqref{P4a} is satisfied.
Similarly,
it follows from Lemma \ref{LEM:NL2} with \eqref{P0} that
\begin{align}
\|e\|_{N^\frac{d-2}{2}(I_1)}
\leq C(R, M) \tau^{\al \be}\leq \eps
\label{P7}
\end{align}
	
\noi
by choosing
 $\tau = \tau(R, M,  T, \eps, \al, \be)>0$ sufficiently small.
Therefore,  all the conditions of Lemma \ref{LEM:perturb} are satisfied
on the second interval $I_1$,
provided that $\tau = \tau (R, M, T, \eps,\alpha, \beta)$ is chosen sufficiently small
and that
$ (C_1(R) + 1)\eps <\eps_0$.
Hence,
by Lemma \ref{LEM:perturb},
we obtain
\begin{align*}
\| w - v\|_{X^\frac{d-2}{2}(I_1)} \leq C_0(R) (C_1(R) + 1)\eps.
\end{align*}
	
\noi
In particular, we have
\begin{align*}
\|w(t_2) - v(t_2) \|_{H^\frac{d-2}{2}}
\leq C_2(R) \eps.
\end{align*}

By choosing $\eta = \eta(R, M, T) > 0$
and $\tau = \tau(R, M, T, \eps, \alpha,\beta)>0$
sufficiently small,
we can argue inductively
and obtain
\begin{align}
 \|w(t_j) - v(t_j) \|_{H^\frac{d-2}{2}}
\leq C_j(R) \eps.
\end{align}

\noi
for all $0 \leq j \leq J'$,
as long as
(i) $(J'+2)\eta \leq \eta_0$
and (ii)
 $\eps  = \eps(R, \eta, J)$ is sufficiently small
such that
$(C_j(R)+1) \eps < \eps_0$,
$j = 1, \dots, J'$.
Recalling that $J'+1 \leq J = J(R, T, \eta)$,
we see that this can be achieved
by choosing  $\eta = \eta(R, M, T)>0$,
$\eps = \eps(R, M, T) > 0$,
and $\tau = \tau(R,M, T, \al, \be)
=  \tau(R,M, T, s, \be) >0$ sufficiently small.
This guarantees existence of the solution $v$ to \eqref{PNLS1} on $[t_0, t_0+\tau]$.

Under the conditions (i) - (iii),
we can apply the above local argument on time intervals of length
$\tau = \tau(R, M, T, s, \be)>0$, thus extending the solution $v$ to \eqref{ZNLS1}
on the entire interval $[0, T]$.
\end{proof}

\section{Proof of Theorem \ref{THM:3}}	
\label{SEC:8}

In this section, we prove the following ``almost'' almost sure global existence result.

\begin{proposition}\label{PROP:asGWP}
Let $d\geq 3$ and $s \in (s_d,  s_\textup{crit}]$.
Assume Hypothesis  \textup{(A)}.
Furthermore, assume
Hypothesis \textup{(B)} if $d \ne 4$.
Given $\phi \in H^s(\R^d)$, let $\phi^\o$ be its Wiener randomization defined in \eqref{R1},
satisfying \eqref{R2}.
Then, given any $T, \eps > 0$, there exists a set $\wt \O_{T, \eps}\subset \O$
such that
\begin{itemize}
\item[\textup{(i)}]
$P(\wt \O_{T, \eps}^c) < \eps$,

\item[\textup{(ii)}]
For each $\o \in \wt \O_{T, \eps}$, there exists a (unique) solution $u$
to \eqref{NLS1}  on $[0, T]$
with $u|_{t = 0} = \phi^\o$.

\end{itemize}

\end{proposition}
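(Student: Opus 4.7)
The plan is to apply Proposition \ref{PROP:perturb2} (in the weakened form of Remark \ref{REM:perturb3}) iteratively to the perturbed equation \eqref{NLS2} with $f(t)=z^\o(t):=S(t)\phi^\o$, using Hypothesis (A) to supply the required a priori $H^\frac{d-2}{2}$-bound on the nonlinear part $v=u-z^\o$, and using the probabilistic Strichartz estimates to verify the remaining smallness hypotheses on $z^\o$ with large probability.

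First I would set up the good probability set. Applying Hypothesis (A) with $T$ and $\eps/3$ yields a set $\O^A$ with $P((\O^A)^c)<\eps/3$ and a radius $R=R(T,\eps/3)$ controlling $\|v\|_{L^\infty([0,T];H^\frac{d-2}{2})}$ whenever $v$ exists. Lemma \ref{LEM:Hs} provides $M$ large enough that $\O^M:=\{\|\phi^\o\|_{H^s}\le M\}$ has $P((\O^M)^c)<\eps/3$; on $\O^M$, Lemma \ref{LEM:Ys1}(i) gives $\|z^\o\|_{Y^s([0,T])}\le M$. I would then fix $\al$ as in \eqref{NL2a} and $\be\in(0,\tfrac{1}{d+2})$, let $\tau_0=\tau_0(R,M,T,s,\be)$ be the time step produced by Proposition \ref{PROP:perturb2}, and partition $[0,T]=\bigcup_{j=0}^{J-1}I_j$ into $J\sim T/\tau$ subintervals of length $\tau\in(0,\tau_0]$, to be chosen shortly.

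The second step is probabilistic $W^s$-control on each $I_j$. Since $\psi(D-n)$ commutes with $S(j\tau)$, the function $\chi_j^\o:=S(j\tau)\phi^\o$ is itself the Wiener randomization of $S(j\tau)\phi$ with respect to the same $\{g_n\}$, and $\|S(j\tau)\phi\|_{H^s}=\|\phi\|_{H^s}$. Applying Lemma \ref{PROP:Str1} to $\jb{\nb}^s S(j\tau)\phi\in L^2(\R^d)$ for each of the three Lebesgue exponents $q\in\{4,\,d+2,\,\tfrac{6(d+2)}{d+4}\}$ defining $W^s$ in \eqref{W1} should yield $\|z^\o\|_{W^s(I_j)}\le\tau^\be$ outside an exceptional set of probability at most $C\exp(-c\tau^{-\delta}\|\phi\|_{H^s}^{-2})$ for some $\delta=\delta(\be,d)>0$. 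A union bound over the $J$ subintervals then produces a set $\O^W$ with $P((\O^W)^c)\le (CT/\tau)\exp(-c\tau^{-\delta}\|\phi\|_{H^s}^{-2})<\eps/3$, provided $\tau$ is taken sufficiently small in terms of $\eps,T,s,\|\phi\|_{H^s}$.

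For the iterative extension I would set $\wt\O_{T,\eps}:=\O^A\cap\O^M\cap\O^W$, a set of probability $>1-\eps$. Fixing $\o\in\wt\O_{T,\eps}$, I construct $v$ inductively in $j=0,1,\dots,J-1$: the initial datum $v(j\tau)\in H^\frac{d-2}{2}$ satisfies $\|v(j\tau)\|_{H^\frac{d-2}{2}}\le R$ (trivially for $j=0$; by Hypothesis (A) applied to the already-constructed solution on $[0,j\tau]$ otherwise), and conditions (i)--(iii) of Proposition \ref{PROP:perturb2} are in place thanks to Hypothesis (B) (or the Ryckman--Vi\c{s}an result \cite{RV,Visan} when $d=4$), the $Y^s$-bound $M$, the $W^s$-bound $\tau^\be$ from Step 2 read through Remark \ref{REM:perturb3}, and the uniform a priori bound $R$. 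Each application extends $v$ to the end of $I_j$, and after $J$ steps the solution covers all of $[0,T]$; setting $u:=z^\o+v$ produces the desired solution of \eqref{NLS1}, with uniqueness of the nonlinear part inherited from Theorem \ref{THM:1}. The main obstacle is the apparent circularity --- Hypothesis (A) presupposes the existence of $v$, which is what I am trying to establish --- and this is resolved precisely because Proposition \ref{PROP:perturb2} is formulated as a \emph{conditional} extension statement: given the a priori bound, it extends $v$ on an interval of length $\tau$ depending only on $R,M,T,s,\be$ and not on proximity to a hypothetical blow-up, so the induction is self-consistent. The secondary concern is that $J\sim T/\tau$ grows as $\tau\to 0$, but this is harmless because the exponential-in-$\tau^{-\delta}$ probability estimate from Lemma \ref{PROP:Str1} dwarfs the polynomial factor $J$ in the union bound.
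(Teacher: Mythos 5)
Your proposal is correct and follows essentially the same route as the paper: split the exceptional set three ways (Hypothesis (A), a $Y^s$-bound via Lemmata \ref{LEM:Hs} and \ref{LEM:Ys1}(i), and a union bound of Lemma \ref{PROP:Str1} over the $\sim T/\tau$ subintervals for the $W^s$-smallness), then iterate Proposition \ref{PROP:perturb2} via Remark \ref{REM:perturb3}. Your explicit justification of the quasi-invariance step (that $S(j\tau)\phi^\o$ is the Wiener randomization of $S(j\tau)\phi$, whose $H^s$-norm equals $\|\phi\|_{H^s}$) is exactly the point the paper records in Remark \ref{REM:asGWP}.
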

	
\noi
It is easy to see that
``almost'' almost sure global existence implies almost sure global existence.
See \cite{CO}.
For completeness, we first show how
Theorem \ref{THM:3} follows as an immediate consequence of
Proposition \ref{PROP:asGWP}.

Given small $\eps > 0$, let $T_j = 2^j$ and $\eps_j = 2^{-j} \eps$, $j \in \mathbb{N}$.
For each $j$, we apply Proposition \ref{PROP:asGWP} and construct $ \wt \Omega_{T_j, \eps_j}$.
Then, let $\Omega_\eps = \bigcap_{j = 1}^\infty  \wt \Omega_{T_j, \eps_j}$.
Note that (i) $P (\Omega_\eps^c) < \eps$,
and (ii)
for each $\o \in \O_\eps$,
we have a global solution $u$ to \eqref{NLS1} with $u|_{t = 0} = \phi^\o$.
 Now, let $\Sigma = \bigcup_{\eps > 0} \Omega_\eps$.
 Then,
 we have $P (\Sigma^c) = 0$.
 Moreover,
 for each $\o \in \Sigma$,
we have a global solution $u$ to \eqref{NLS1} with $u|_{t = 0} = \phi^\o$.
This proves Theorem \ref{THM:3}.

The rest of this section is devoted to the proof of Proposition \ref{PROP:asGWP}.

\begin{proof}[Proof of Proposition \ref{PROP:asGWP}]
Given $T, \eps > 0$,
set
\begin{equation}	
M = M(T, \eps) \sim \|\phi\|_{H^s}\Big(\log\frac{1}{\eps}\Big)^\frac{1}{2}.
\label{asGWP0}
\end{equation}

\noi
Defining $ \O_1 = \O_1(T, \eps)$
by
\[  \O_1 := \big\{ \o \in \O:\, \|S(t) \phi^\o \|_{Y^s([0, T])} \leq M\big\},\]

\noi
it follows from
 Lemma \ref{LEM:Ys1} (i) and Lemma \ref{LEM:Hs} that
\begin{equation}
 P( \O_1^c) < \frac{\eps}{3}.
\label{asGWP1}
\end{equation}

Given $T, \eps > 0$,
let $R = R(T, \frac \eps 3)$ and $M$ be as in \eqref{HypA} and \eqref{asGWP0},
respectively.
With $\tau = \tau(R, M, T)$ as in Proposition \ref{PROP:perturb2}.
write
\[ [0, T] = \bigcup_{j = 0}^{[\frac T{ \tau_*}]} [j \tau_*, (j + 1) \tau_*] \cap [0, T], \]

\noi
for some $\tau_* \leq \tau$ (to be chosen later).
Now, define $\O_2$ by
\[ \O_2 = \Big\{ \o \in \O:
\| S(t) \phi^\o \|_{W^s([j \tau_*, (j+1)\tau_*])}
\leq C  \tau_*^\frac{1}{2(d+2)}, \, j = 0, \dots, \big[\tfrac T{\tau_*}\big] \Big\},
\]

\noi
\noi
where $C$ is as in \eqref{P0}.
Then,
by Lemma \ref{PROP:Str1}, we have
\begin{align*}
P(\O_2^c)
& \leq  \sum_{j = 0}^{[\frac{T}{\tau_*}] }
P\Big( \| S(t) \phi^\o  \|_{W^s([j\tau_*, (j+1)\tau_*])}\geq C\tau_*^\frac{1}{2(d+2)}\Big)
  \les \frac{T}{\tau_*}  \exp\bigg(-\frac{c}{\tau_*^\frac{1}{d+2} \|\phi\|_{H^s}^2}\bigg)\notag \\
\intertext{By making $\tau_* = \tau_*(\|\phi\|_{H^s})$ smaller, if necessary, we have}
&  \les \frac{T}{\tau_\ast}\cdot \tau_* \exp\bigg(-\frac{c}{2\tau_*^\frac{1}{d+2} \|\phi\|_{H^s}^2}\bigg)
= T \exp\bigg(-\frac{1}{2\tau_*^\frac{1}{d+2} \|\phi\|_{H^s}^2}\bigg).
\end{align*}

\noi
Hence, by choosing $\tau_* = \tau_*(T, \eps, \|\phi\|_{H^s})$ sufficiently small,
we have
\begin{align}
P(\O_2^c) < \frac{\eps}{3}.
\label{asGWP4}
\end{align}

Finally, set $\wt \O_{T, \eps} = \Omega_{T, \frac \eps 3}\cap \O_1 \cap\O_2$,
where $\Omega_{T, \frac \eps 3}$ is as in Hypothesis (A) with $\eps$
replaced by $\frac \eps 3$.
Then, from \eqref{asGWP1} and \eqref{asGWP4}, we have
\[ P(\wt \O_{T, \eps}^c) < \eps.\]

\noi
Moreover, for $\o \in \wt \O_{T, \eps}$,
 we can iteratively apply  Proposition \ref{PROP:perturb2} and Remark \ref{REM:perturb3}
and construct the solution $v = v^\o$ on each $[j \tau_*, (j+1)\tau_*]$, $j = 0, \dots, [\frac T\tau_*]-1$,
and $\big[ [\frac T \tau_*]\tau, T\big]$.
This completes the proof of Proposition \ref{PROP:asGWP}.
\end{proof}

\begin{remark}\label{REM:asGWP} \rm
It is worthwhile to mention that
the proof of Proposition \ref{PROP:asGWP}
strongly depends on the quasi-invariance property of
the distribution of the linear solution $S(t) \phi^\o$.
More precisely,
in the proof above, we exploited the fact that
the distribution of
$\| S(t) \phi^\o \|_{W^s([t_0, t_0 + \tau_*])}$ depends
basically only on the length $\tau_*$ of the interval, but
is independent of  $t_0$.
\end{remark}

\section{Probabilistic global existence via randomization on dilated cubes}
\label{SEC:9}

In this section, we present the proof of Theorem \ref{THM:4}.
The main idea is to exploit the dilation symmetry of the cubic NLS \eqref{NLS1}.
For a function $\phi=\phi(x)$, we define its scaling $\phi_\mu$ by
\[\phi_\mu (x) := \mu^{-1} \phi(\mu^{-1}x),\]

\noi
while for a function $f=f(t,x)$,  we define its scaling $f_\mu$ by
\[ f_\mu(t,x):=\mu^{-1}f(\mu^{-2}t,\mu^{-1}x).\]

\noi
Then, given $\phi \in H^s(\R^d)$,
we have
\begin{align}
\|\phi_\mu\|_{\dot H^s(\R^d)}
= \mu^{\frac{d-2}{2} - s}\|\phi\|_{\dot H^s(\R^d)} .
\label{D1}
\end{align}

\noi
If $s < s_\textup{crit} = \frac{d-2}{2}$, that is, if $\phi$ is supercritical with respect to the scaling symmetry,  then
we can make
the $H^s$-norm of the scaled function $\phi_\mu$ small
by taking $\mu \ll 1$.
The issue is
 that the Strichartz estimates we employ in proving probabilistic well-posedness
 are (sub-)critical and do not become small even if we take $\mu \ll 1$.
It is for this reason that
we consider
the  randomization  $\phi^{\omega, \mu}$ on {\it dilated} cubes.

Fix  $\phi \in H^s(\R^d)$ with $s \in( s_d, s_\textup{crit})$, where $s_\textup{crit} = \frac{d-2}{2}$
and $s_d$ is as in \eqref{Sd1}.
Let $\phi^{\omega, \mu}$
be its randomization on dilated cubes of scale $\mu$ as in \eqref{R3}.
Instead of considering \eqref{NLS1} with $u_0 = \phi^{\omega, \mu}$,
we consider the scaled Cauchy problem:
\begin{equation}
\begin{cases}\label{NLS3}
i \partial_t u_\mu + \Delta u_\mu = \pm |u_\mu|^{2}u_\mu \\
u_\mu\big|_{t = 0} = u_{0, \mu} =  (\phi^{\omega, \mu})_\mu,
\end{cases}
\end{equation}

\noi
where $u_\mu$ is as in \eqref{scaling}
and
$(\phi^{\omega, \mu})_\mu(x) := \mu^{-1} \phi^{\omega, \mu}(\mu^{-1}x)$
is the scaled randomization.
For notational simplicity,
we denote
$(\phi^{\omega, \mu})_\mu$ by $\phi^{\omega, \mu}_\mu$ in the following.
Denoting the linear and nonlinear part of $u_\mu$ by
$z_\mu (t) = z^\o_\mu(t) : = S(t) \phi^{\o, \mu}_\mu$
and $v_\mu(t) := u_\mu(t) - S(t) \phi^{\o, \mu}_\mu$ as before,
we  reduce \eqref{NLS3} to
\begin{equation}
\begin{cases}
	 i \dt v_\mu + \Dl v_\mu = \pm |v_\mu + z_\mu|^2(v_\mu+z_\mu)\\
v_\mu|_{t = 0} = 0.
 \end{cases}
\label{NLS4}
\end{equation}

\noi
Note that if $u$ satisfies \eqref{NLS1} with initial data
$u(0)=\phi^{\o,\mu}$
then $u_\mu$, $z_\mu$, and $v_\mu$
are indeed the scalings of $u$, $z:=S(t)\phi^{\o, \mu}$,
and $v : = u -z$,  respectively.
For $u_\mu$ this simply follows from the scaling symmetry of  \eqref{NLS1}.
For $z_\mu$ and $v_\mu$,  this follows from the following observation:
\begin{align}\label{S_mu}
\mathcal F_x\Big[\big(S(t)\phi^{\o,\mu}\big)_\mu\Big](\xi)
= \mu^{d-1}e^{-i\frac{t}{\mu^2}|\mu\xi|^2} \ft{\phi^{\o,\mu}}(\mu \xi)
= e^{-it|\xi|^2}\ft{\phi^{\o, \mu}_\mu}( \xi)
=\widehat{z_\mu}(t, \xi).
\end{align}

Define $\G_\mu$ by
\begin{equation}
\G_\mu  v_\mu(t) =\mp i  \int_0^t S(t-t') \N (v_\mu+z_\mu )(t') dt'.
\label{NLS7}
\end{equation}

\noi
In the following,
we show that there exists $\mu_0 = \mu_0(\eps, \|\phi\|_{H^s} ) > 0$
such that, for $\mu \in (0, \mu_0)$,
 the estimates \eqref{nl1c} and \eqref{nl1d}
in Proposition \ref{PROP:NL1}
(with $\wt \G$ replaced by $\G_\mu$)
hold with $R = \eta_2$ outside a set of probability $< \eps$,
where  $\eta_2$ be as in \eqref{small0}.
In view of  \eqref{psi}, it is easy to see that
\begin{equation*}
\psi(D-n) \phi_\mu =\big( \psi^\mu(D-\mu n) \phi)_\mu.
\end{equation*}

\noi
Hence, we have
\begin{equation}
\phi^{\o, \mu}_\mu = (\phi^{\o, \mu})_\mu= \sum_{n \in \Z^d} g_n(\o) \psi(D-n) \phi_\mu.
\label{D2}
\end{equation}

\noi
Given $\eta_2$  as in \eqref{small0} and $\mu>0$,
define $\Omega_{1, \mu}$ by
\begin{equation*}
\Omega_{1, \mu} = \Big\{ \o \in \O:\,
\|S(t) \phi^{\omega, \mu}_\mu  \|_{L^q_t W^{s, q}_x ( \R \times \R^d)} \leq \eta_2, \,
q = 4, \tfrac{6(d+2)}{d+4},  d+2\Big\} .
\end{equation*}

\noi
We also define $\Omega_{2, \mu}$ by
\begin{equation*}
\Omega_{2, \mu} = \big\{ \o \in \O:\,
\| \phi^{\omega, \mu}_\mu  \|_{H^s(\R^d)} \leq \eta_2\big\}.
\end{equation*}

\noi
Now, let $\O_\mu = \Omega_{1, \mu} \cap \Omega_{2, \mu} $.
Noting that  $ 4,  \frac{6(d+2)}{d+4}$, and $d+2$
are larger than the diagonal Strichartz admissible index $\frac{2(d+2)}{d}$,
it follows from
Lemma \ref{PROP:Str2} and
Lemma \ref{LEM:Hs} with \eqref{D2}  and \eqref{D1} that
\begin{align*}
P( \O_{ \mu}^c)
 \leq C\exp\bigg( -c \frac{\eta_2^2}{ \|\phi_\mu\|_{H^s}^2}\bigg)
 \leq C\exp\bigg( -c \frac{\eta_2^2}{ \mu^{d-2 -2 s}\|\phi\|_{H^s}^2}\bigg)
\end{align*}

\noi
for $\mu \leq 1$.
Then, by
setting
\begin{equation}
 \mu_0 \sim
\Bigg(\frac{\eta_2}{\|\phi\|_{H^s} \big(\log \frac{1}{\eps}\big)^\frac{1}{2}}\Bigg)^\frac{1}{\frac{d-2}{2}-s},
\label{D2a}
\end{equation}

\noi
we have
\begin{align}
P( \O_\mu^c)
<\eps
\label{D3}
\end{align}

\noi
for $\mu \in (0,  \mu_0)$.
Note that $\mu_0 \to 0$ as $\eps \to 0$.
Recall that
$q = 4,  \frac{6(d+2)}{d+4}$, and $d+2$ are the only relevant values
of the space-time Lebesgue indices controlling the random forcing term
in the proof of Proposition \ref{PROP:NL1}.
Hence,  the estimates \eqref{nl1c} and \eqref{nl1d}
in Proposition \ref{PROP:NL1}
(with $\wt \G$ replaced by $\G_\mu$)
hold with $R = \eta_2$ for each $\o \in \O_\mu$.
Then, by repeating the proof of Theorem \ref{THM:2} in Section \ref{SEC:THM12},
we see that, for each $\o \in \O_\mu$,
there exists a  global solution
$u_\mu$  to \eqref{NLS3} with $u_\mu|_{t = 0} = \phi^{\o, \mu}_\mu$
which scatters both forward and backward in time.
By undoing the scaling, we obtain a global solution
$u$ to \eqref{NLS1} with $u|_{t = 0} = \phi^{\o, \mu}$
for each $\o \in \O_\mu$. Moreover, scattering for $u_\mu$
implies scattering for $u$. Indeed,
as in Theorem \ref{THM:2}, there exists $v_{+,\mu}\in H^{\frac{d-2}{2}}(\R^d)$
such that
\[\lim_{t\to\infty}\big\|u_\mu(t)-S(t)\big(\phi^{\omega,\mu}_\mu+v_{+,\mu}\big)\big\|_{H^{\frac{d-2}{2}}}=0.\]
Then, a computation analogous to \eqref{S_mu}
yields
\[S(t)(\phi^{\omega,\mu}_\mu+v_{+,\mu})
=\big(S(t)(\phi^{\omega,\mu}+v_+)\big)_{\mu},\]

\noi
where $v_+:=(v_{+,\mu})_{\mu^{-1}}\in H^{\frac{d-2}{2}}(\R^d)$.
Then, by \eqref{D1}, it follows that
\[\lim_{t\to \infty}\big\|u-S(t)\big(\phi^{\omega,\mu}+v_+\big)\big\|_{H^{\frac{d-2}{2}}}=0.\]

\noi
This proves that $u$ scatters forward in time.
Scattering of $u$ as $ t\to -\infty$ can be proved analogously.
This completes the proof of Theorem \ref{THM:4}.

\appendix

\section{On the properties of the $U^p$- and $X^s$-spaces}
\label{SEC:A}

In this appendix, we prove some additional properties
of the $U^p$- and $X^s$-spaces.
In the following,
all intervals are half open intervals of the form $[a, b)$
and $p$ denotes a number such that $1\leq p < \infty$.

\begin{lemma}\label{LEM:U0}
Let $u = \sum_{j = 1}^\infty \ld_j a_j$, where $\{\ld_j\}_{j = 1}^\infty \in \l^1 (\mathbb N; \mathbb C)$
and $a_j$'s are $U^p$-atoms.
Given an interval $I \subset \R$,
we can write  $u \cdot \chi_I $ as
$ u \cdot \chi_I = \sum_{j = 1}^\infty \wt \ld_j \wt a_j$
for some $\{\wt \ld_j\}_{j = 1}^\infty \in \l^1$
and some sequence $\{\wt a_j\}_{j = 1}^\infty$ of $U^p$-atoms
such that
\begin{equation}\label{U0}
\sum_{j = 1}^{\infty} |\wt \ld_j|
  \leq\sum_{j = 1}^{\infty} | \ld_j|.
\end{equation}
As a consequence, we have
\begin{equation}\label{bound_chi_I}
\|u\cdot \chi_I\|_{U^p(\R)}\leq \|u\|_{U^p(\R)}
\end{equation}
for any $u\in U^p(\R)$ and any $I\subset \R$.

\end{lemma}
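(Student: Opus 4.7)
The plan is to reduce everything to the following atomic claim: for any single $U^p$-atom $a$ and any half-open interval $I=[\alpha,\beta)\subset\R$, the truncation $a\cdot\chi_I$ can be written as $c\cdot \tilde a$ where $\tilde a$ is again a $U^p$-atom and $|c|\leq 1$. Granting this for a moment, we obtain the lemma almost for free: given the decomposition $u=\sum_j \lambda_j a_j$ with $\lambda_j\in\C$ and $U^p$-atoms $a_j$, we set $a_j\chi_I=c_j\tilde a_j$ and $\tilde\lambda_j:=\lambda_j c_j$, so that
\[
u\cdot\chi_I=\sum_{j=1}^\infty \tilde\lambda_j\,\tilde a_j,
\qquad
\sum_{j=1}^\infty |\tilde\lambda_j|\leq \sum_{j=1}^\infty |\lambda_j|,
\]
which is exactly \eqref{U0}. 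Taking the infimum over all atomic representations of $u$ then yields \eqref{bound_chi_I}.

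The atomic claim is a direct calculation. Writing $a=\sum_{k=1}^K \phi_{k-1}\chi_{[t_{k-1},t_k)}$ with a partition $\{t_k\}_{k=0}^K\in\mathcal{Z}$ and $\sum_{k=0}^{K-1}\|\phi_k\|_H^p=1$, I would refine the partition by inserting the endpoints $\alpha,\beta$ of $I$ (when they fall strictly inside some sub-interval $(t_{k-1},t_k)$), so that the new partition $\{s_l\}_{l=0}^L$ still lies in $\mathcal{Z}$ and every interval $[s_{l-1},s_l)$ is either entirely contained in $I$ or entirely disjoint from $I$. Then
\[
a\cdot\chi_I=\sum_{l=1}^L \psi_{l-1}\,\chi_{[s_{l-1},s_l)},
\]
where $\psi_{l-1}$ equals $\phi_{k-1}$ when $[s_{l-1},s_l)\subset [t_{k-1},t_k)\cap I$ and vanishes otherwise. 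Since each $\psi_{l-1}$ is either zero or some $\phi_{k-1}$, and since at most one $s$-sub-interval sits inside each original $t$-sub-interval $[t_{k-1},t_k)\cap I$, we have
\[
c^p:=\sum_{l=0}^{L-1}\|\psi_l\|_H^p\leq \sum_{k=0}^{K-1}\|\phi_k\|_H^p=1.
\]
If $c=0$ then $a\cdot\chi_I\equiv 0$ and we may take $\tilde\lambda_j=0$ together with any atom; otherwise set $\tilde a:=c^{-1}(a\cdot\chi_I)$, which is a bona fide $U^p$-atom associated to the refined partition $\{s_l\}$ and to the step-values $\{c^{-1}\psi_l\}$.

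The only mild obstacle is bookkeeping: one has to check that $\{s_l\}$ still satisfies the strict-inequality and endpoint conventions of $\mathcal{Z}$ (in particular the convention $a(t_K)=0$ when $t_K=\infty$ and the handling of $\alpha=-\infty$ or $\beta=+\infty$), and that the refined step function still has the correct form of a $U^p$-atom. These are routine verifications that do not affect the main inequality $c\leq 1$, which is the content of the argument.
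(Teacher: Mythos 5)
Your proof is correct and follows essentially the same route as the paper's: truncate each atom, observe that the surviving pieces contribute an $\ell^p$-sum of norms at most $1$ so that the truncation is a scalar multiple (with scalar $\le 1$) of a new atom, and then pass to the infimum over representations. The only difference is presentational — you make the partition refinement at the endpoints of $I$ explicit, which the paper leaves implicit in its definition of $\wt a_j$.
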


\begin{proof}

With $a_j = \sum_{k = 1}^{K_j}  \phi_{k-1}^j\chi_{[t^j_{k-1}, t^j_k)}$,
we have
\begin{align*}
u \cdot \chi_I= \sum_{j = 1}^\infty \ld_j a_j \chi_I
= \sum_{j = 1}^\infty \ld_j   \sum_{k = 1}^{K_j}
\phi_{k-1}^j \chi_{[t^j_{k-1}, t^j_k)\cap I} .
\end{align*}

\noi
Then, setting $\wt \ld_j$ and $\wt a_j$ by
\begin{align}
\wt \ld_j & = \Big( \sum_{k \in A_j(I) }
\|\phi_k^j\|_{H}^p\Big)^\frac{1}{p} \ld_j ,\label{U0a}\\
 \wt a_j & =  \frac{1}{\big( \sum_{k \in A_j(I) }
\|\phi_k^j\|_{H}^p\big)^\frac{1}{p}}
\sum_{k \in A_j(I) }
\phi_{k-1}^j  \chi_{[t^j_{k-1}, t^j_k)\cap I} ,  \notag
 \end{align}

\noi
where $A_j(I)$ is defined by
\[A_j(I) = \big\{ k \in \{1, \dots, K_j\}: [t^j_{k-1}, t^j_k)\cap I \ne \emptyset\big\}, \]

\noi
we have
$u\cdot \chi_I =  \sum_{j = 1}^\infty \wt \ld_j \wt a_j$.
Moreover, noting that
\[  \Big( \sum_{k \in A_j(I) }
\|\phi_k^j\|_{H}^p\Big)^\frac{1}{p}
\leq
\Big( \sum_{k = 1}^{K_j}
\|\phi_k^j\|_{H}^p \Big)^\frac{1}{p} =   1,\]

\noi
we obtain \eqref{U0} from \eqref{U0a}.
Finally, by \eqref{U0}, we have
\[\|u\cdot\chi_I\|_{U^p(\R)}\leq \sum_{j=1}^\infty |\lambda_j|,\]

\noi
for any representation $u=\sum_{j=1}^\infty \ld_j a_j$
with $\{\ld_j\}_{j = 1}^\infty \in \l^1 (\mathbb N; \mathbb C)$
and  $U^p$-atoms $a_j$'s.
Hence,
by taking an infimum over all
such representations of $u$,
we obtain \eqref{bound_chi_I}.
\end{proof}

Given an interval $I \subset \R$, we define
the local-in-time $U^p$-norm  in the usual manner
as a restriction norm:
\[ \|u\|_{U^p(I)} = \inf_v\big\{ \|v\|_{U^p(\R)}:\, v|_I = u\big\}.\]

\begin{remark}\label{REM:U0}\rm
The infimum is achieved by $v = u \cdot \chi_I$  in view of Lemma \ref{LEM:U0}.
In the following, however,
we may use  other extensions,  depending on our purpose.
\end{remark}

The next lemma states the  subadditivity of the local-in-time
$U^p$-norm over intervals.

\begin{lemma}\label{LEM:U1}
Given an interval  $I \subset \R$,
let $I = \bigcup_{j = 1}^\infty I_j$ be a partition of $I$.
Then, we have
\begin{equation}
  \|u\|_{U^p(I) }\leq \sum_{j = 1}^\infty \|u\|_{U^p(I_j) }.
\label{U1}
  \end{equation}

\end{lemma}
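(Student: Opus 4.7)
\medskip

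\noindent
\textbf{Proof proposal.}
The plan is to build a global extension of $u|_I$ by gluing together near-optimal extensions on each piece $I_j$, using Lemma \ref{LEM:U0} to control the cutoffs and using that $U^p(\R;H)$ is a Banach space to assemble the pieces.

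If the right-hand side of \eqref{U1} is infinite there is nothing to prove, so assume $\sum_j \|u\|_{U^p(I_j)} < \infty$. Fix $\eps > 0$. By the definition of the restriction norm, for each $j \geq 1$ we can choose an extension $v_j \in U^p(\R; H)$ with $v_j|_{I_j} = u|_{I_j}$ and
\[
\|v_j\|_{U^p(\R)} \;\leq\; \|u\|_{U^p(I_j)} + \tfrac{\eps}{2^j}.
\]
By Lemma \ref{LEM:U0} (specifically \eqref{bound_chi_I}), $\|v_j \cdot \chi_{I_j}\|_{U^p(\R)} \leq \|v_j\|_{U^p(\R)}$. In particular, $\sum_{j=1}^\infty \|v_j \cdot \chi_{I_j}\|_{U^p(\R)} < \infty$, so by the completeness of $U^p(\R;H)$ (see Remark \ref{REM:UpVp}) the series
\[
v \;:=\; \sum_{j=1}^\infty v_j \cdot \chi_{I_j}
\]
converges absolutely in $U^p(\R;H)$ and satisfies, by the triangle inequality,
\[
\|v\|_{U^p(\R)} \;\leq\; \sum_{j=1}^\infty \|v_j \cdot \chi_{I_j}\|_{U^p(\R)}
 \;\leq\; \sum_{j=1}^\infty \|v_j\|_{U^p(\R)}
 \;\leq\; \sum_{j=1}^\infty \|u\|_{U^p(I_j)} + \eps.
\]

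Since the $I_j$ partition $I$ (in particular, they are pairwise disjoint), on each $I_j$ every term in the defining series of $v$ vanishes except for $v_j \cdot \chi_{I_j}$, which equals $v_j = u$ on $I_j$. Hence $v|_I = u$, and therefore
\[
\|u\|_{U^p(I)} \;\leq\; \|v\|_{U^p(\R)} \;\leq\; \sum_{j=1}^\infty \|u\|_{U^p(I_j)} + \eps.
\]
Letting $\eps \to 0$ yields \eqref{U1}.

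The only nonroutine point is the justification that $v|_I = u$ after the series is assembled in the abstract Banach space $U^p(\R;H)$: one needs that the $U^p$-limit of the partial sums coincides pointwise (or at least on $I$) with the formal sum, which follows from the continuous embedding $U^p(\R;H) \hookrightarrow L^\infty(\R;H)$ recalled in Remark \ref{REM:UpVp}, so that $U^p$-convergence forces pointwise agreement on $I$ with the $u|_{I_j}$'s. Once this is in hand, the argument is a clean combination of Lemma \ref{LEM:U0} and the Banach-space structure.
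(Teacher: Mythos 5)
Your proof is correct and follows essentially the same route as the paper's: choose near-optimal extensions $v_j$ with error $\eps/2^j$, cut them off with Lemma \ref{LEM:U0}, sum, and apply the triangle inequality. You are in fact slightly more careful than the paper in justifying that the series $\sum_j v_j\cdot\chi_{I_j}$ converges in $U^p(\R;H)$ and that its limit restricts to $u$ on $I$ (via the embedding into $L^\infty(\R;H)$), a point the paper's proof passes over silently.
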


\begin{proof}
Given $\eps > 0$,
it follows from the definition of the local-in-time $U^p$-norm that
there exists $v_j \in U^p(\R) $ such that
$v_j|_{I_j} = u$ and
\begin{equation}
	 \|v_j\|_{U^p(\R) } \leq  \|u\|_{U^p(I_j) } +\frac{\eps}{2^j}.
	 \label{U1a}
\end{equation}

\noi
for each $j \in \mathbb N$.
Then, by
\eqref{bound_chi_I} and  \eqref{U1a},
 we have
\begin{align}
 \|u\|_{U^p(I)}
& \leq \Big\|\sum_{j= 1}^\infty v_j \cdot \chi_{I_j} \Big\|_{U^p(\R)}
\leq \sum_{j= 1}^\infty \| v_j \cdot \chi_{I_j} \|_{U^p(\R)}
 \leq \sum_{j = 1}^\infty \| v_j \|_{U^p(\R)} \notag \\
&   \leq \sum_{j = 1}^\infty  \|u\|_{U^p(I_j)}  + \eps.
\label{U1d}
\end{align}

\noi
Since $\eps  >0$ is arbitrary,
\eqref{U1} follows from \eqref{U1d}.
\end{proof}

As a corollary, we immediately obtain the following subadditivity
property
of the local-in-time $X^s$-norm
over intervals.

\begin{lemma}\label{LEM:U2}
Let $s \in \R$.
Given an interval  $I \subset \R$,
let $I = \bigcup_{j = 1}^\infty I_j$ be a  partition of $I$.
Then, we have
\begin{equation*}
  \|u\|_{X^s(I) }\leq \sum_{j = 1}^\infty \|u\|_{X^s(I_j) }.
  \end{equation*}

\end{lemma}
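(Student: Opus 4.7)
The plan is to reduce the subadditivity of the $X^s(I)$-norm to the subadditivity of the $U^2_\Delta L^2(I)$-norm already established in Lemma \ref{LEM:U1}, via the Littlewood-Paley decomposition. The essential preliminary step will be to verify the natural identity
\[
\|u\|_{X^s(J)} = \Big(\sum_{N\geq 1} N^{2s}\,\|\P_N u\|_{U^2_\Delta L^2(J)}^2\Big)^{1/2}
\]
for any interval $J\subset\R$. The lower bound ``$\geq$'' is immediate: any extension $v\in X^s(\R)$ of $u|_J$ satisfies $\P_N v|_J = \P_N u|_J$, so $\|\P_N v\|_{U^2_\Delta L^2(\R)} \geq \|\P_N u\|_{U^2_\Delta L^2(J)}$ by the definition of the restriction norm; squaring, weighting, summing, and taking the infimum over $v$ yields the inequality. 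For the upper bound ``$\leq$'', I would construct a near-optimal extension of $u|_J$ by extending each frequency-localized piece separately: by Remark \ref{REM:U0} applied to $S(-t)\P_N u$, the function $w_N := S(t)\bigl[(S(-\cdot)\P_N u)\cdot\chi_J\bigr]$ extends $\P_N u|_J$ to $\R$ and achieves $\|w_N\|_{U^2_\Delta L^2(\R)} = \|\P_N u\|_{U^2_\Delta L^2(J)}$. Because multiplication by $\chi_J(t)$ commutes with spatial Fourier multipliers, each $w_N$ retains the frequency support of $\P_N u$, so the sum $v := \sum_N w_N$ is a bona fide extension of $u|_J$ whose own Littlewood-Paley pieces essentially recover the $w_N$.

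With the identity in hand, the rest is short. First, I would apply Lemma \ref{LEM:U1} to $S(-t)\P_N u \in U^2(\R; L^2)$ for each dyadic $N$, obtaining
\[
\|\P_N u\|_{U^2_\Delta L^2(I)} \leq \sum_{j=1}^\infty \|\P_N u\|_{U^2_\Delta L^2(I_j)}.
\]
Then I would take the $\ell^2_N$-sum weighted by $N^{2s}$ and invoke Minkowski's inequality (the natural embedding $\ell^1_j(\ell^2_N) \hookrightarrow \ell^2_N(\ell^1_j)$ with constant one) to interchange the order of summation, concluding
\[
\|u\|_{X^s(I)} \leq \sum_{j=1}^\infty \Big(\sum_N N^{2s}\|\P_N u\|_{U^2_\Delta L^2(I_j)}^2\Big)^{1/2} = \sum_{j=1}^\infty \|u\|_{X^s(I_j)}.
\]

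The only point that is not entirely routine, and hence the expected main obstacle, is the verification of the preliminary identity -- specifically, controlling the sum $v = \sum_N w_N$ in $X^s(\R)$. One must check that the per-frequency extensions assemble into a single tempered distribution and that its Littlewood-Paley pieces recover the $w_N$ up to a harmless overlap constant. This is handled by the Fourier-support preservation noted above (only finitely many $w_M$ can contribute to a given $\P_N v$) together with the $\ell^2_N$-control on $N^s\|w_N\|_{U^2_\Delta L^2}$ furnished by the hypothesis that the right-hand side of the identity is finite. Once this technical point is settled, Lemma \ref{LEM:U2} indeed follows ``immediately'' from Lemma \ref{LEM:U1} and Minkowski's inequality as above.
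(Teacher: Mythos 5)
Your proof is correct and is exactly the argument the paper has in mind when it calls the lemma an immediate corollary of Lemma \ref{LEM:U1} (the paper records the needed identity $\|u\|_{X^s(J)}=\big(\sum_N N^{2s}\|\P_N u\|_{U^2_\Delta L^2(J)}^2\big)^{1/2}$ only later, as \eqref{UP0}--\eqref{UP3} in the proof of Lemma \ref{LEM:U6}). The one technical point you flag is actually a non-issue: since $S(t)\big[(S(-\cdot)\P_N u)\chi_J\big]=(\P_N u)\chi_J=\P_N(u\chi_J)$, your extension $v=\sum_N w_N$ is precisely the sharp cutoff $u\cdot\chi_J$ furnished by Lemma \ref{LEM:U0} and Remark \ref{REM:U0}, so its Littlewood--Paley pieces are the $w_N$ exactly, with no overlap constant to control.
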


We say that $u$ on
$[a, b)$
is a regulated function if both left and right limits exist at every point
(including one-sided limits at the endpoints\footnote{We allow $a = -\infty$ and/or $b = \infty$.}).
Given a regulated function $u$ on $[a, b)$
and a partition $\mathcal P = \{ \tau_1, \dots, \tau_n\}$ of $[a, b)$:
$a < \tau_1 < \cdots < \tau_n < b$,
we defined a step function $u_{\mathcal P}$ by
\begin{align*}
u_\mathcal{P}(t) =
\begin{cases}
u(t), & \text{if } t = \tau_j,\\
u(\tau_j+), & \text{if } \tau_{j} < t < \tau_{j+1},\\
\end{cases}
\end{align*}

\noi
where we set $\tau_0 = a$ and $\tau_{n+1} = b$.
In particular, if $u$ is right-continuous, we have
$u_\mathcal{P}(t) =
u(\tau_j)$ for $ \tau_{j} \leq t < \tau_{j+1}$.
Note that the mapping $\mathcal P: u \mapsto u_{\mathcal P}$ is linear.

\begin{lemma}\label{LEM:U3}
Let $u \in U^p(\R)$.

\noi
\textup{(i)}
\noi
For any partition $\mathcal P$ of $\R$,
 we have
\begin{equation}
 \| u_{\mathcal P}\|_{U^p(\R)} \leq \|u\|_{U^p(\R)}.
\label{U3a}
 \end{equation}

\smallskip

\noi
\textup{(ii)}
Given $\eps > 0$, there exists a partition $\mathcal P$ of $\R$ such that
\begin{equation}
 \| u - u_{\mathcal P}\|_{U^p(\R)} < \eps.
\label{U3aa}
 \end{equation}

\end{lemma}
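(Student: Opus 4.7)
The plan is to first reduce (i) to the atomic bound $\|a_{\mathcal P}\|_{U^p(\R)} \le 1$ using linearity of $u \mapsto u_{\mathcal P}$, and then establish (ii) by truncating the atomic expansion of $u$ and invoking (i) on the tail.

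For (i), since each atom $a = \sum_{k=1}^{K}\phi_{k-1}\chi_{[t_{k-1},t_k)}$ (with $\sum_{k=1}^{K}\|\phi_{k-1}\|_H^p = 1$) is right-continuous and vanishes on $(-\infty,t_0)$, the definition yields $a_{\mathcal P}(t) = a(\tau_j)$ for $t\in [\tau_j,\tau_{j+1})$ with $j\ge 1$ and $a_{\mathcal P}\equiv 0$ on $(-\infty,\tau_1)$. Thus $a_{\mathcal P}$ is itself a right-continuous step function whose value on each $[\tau_j,\tau_{j+1})$ equals $\phi_{k-1}$ whenever $\tau_j\in[t_{k-1},t_k)$, and $0$ otherwise. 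The key combinatorial observation is that for each $k\in\{1,\dots,K\}$, the set of indices $\{j:\tau_j\in[t_{k-1},t_k)\}$ is a (possibly empty) contiguous block, since the $\tau_j$ are strictly increasing and $[t_{k-1},t_k)$ is an interval. Consequently, after merging adjacent pieces of $a_{\mathcal P}$ with identical values, each $k$ produces at most one surviving piece carrying value $\phi_{k-1}$, and one obtains a step-function representation $a_{\mathcal P} = \sum_{l}\psi_l\chi_{[s_l,s_{l+1})}$ with $\sum_l \|\psi_l\|_H^p \le \sum_{k=1}^{K}\|\phi_{k-1}\|_H^p = 1$. Hence $\|a_{\mathcal P}\|_{U^p(\R)} \le 1$. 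For general $u=\sum_j \lambda_j a_j$ with $\sum_j|\lambda_j| \le \|u\|_{U^p(\R)}+\eps$, the embedding $U^p\hookrightarrow L^\infty$ gives uniform convergence of the atomic series, and together with the pointwise definition of $u_{\mathcal P}$ this implies $u_{\mathcal P}=\sum_j \lambda_j (a_j)_{\mathcal P}$ in $U^p$-norm, so $\|u_{\mathcal P}\|_{U^p(\R)} \le \sum_j|\lambda_j| \le \|u\|_{U^p(\R)}+\eps$; letting $\eps\to 0$ proves \eqref{U3a}.

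For (ii), fix $\eps>0$ and any representation $u=\sum_j \lambda_j a_j$ with $\sum_j|\lambda_j|<\infty$. Choose $N$ large enough that $\sum_{j>N}|\lambda_j|<\eps/2$, and set $u_N:=\sum_{j=1}^{N}\lambda_j a_j$, which satisfies $\|u-u_N\|_{U^p(\R)}<\eps/2$. Since $u_N$ is a finite sum of step functions, it is itself a right-continuous step function; taking $\mathcal P$ to be the (finite) set consisting of all the breakpoints of $a_1,\dots,a_N$ ensures $(u_N)_{\mathcal P}=u_N$ by construction. Then, using linearity of $u\mapsto u_{\mathcal P}$ and part (i),
\[
\|u - u_{\mathcal P}\|_{U^p(\R)} = \|(u-u_N) - (u-u_N)_{\mathcal P}\|_{U^p(\R)} \le 2\|u-u_N\|_{U^p(\R)} < \eps,
\]
which proves \eqref{U3aa}.

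The main obstacle is the merging step in (i): a naive representation of $a_{\mathcal P}$ as $\sum_j a(\tau_j)\chi_{[\tau_j,\tau_{j+1})}$ would over-count $\|\phi_{k-1}\|_H^p$ whenever a single atomic interval $[t_{k-1},t_k)$ contains multiple partition points $\tau_j$, producing a sum strictly larger than $1$. The resolution is the elementary observation that such $\tau_j$ form a contiguous block of indices, so the corresponding constant pieces of $a_{\mathcal P}$ can be fused into a single piece with value $\phi_{k-1}$; this is what preserves the atomic bound $\le 1$. The remaining steps are routine bookkeeping.
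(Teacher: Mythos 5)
Your proof is correct and follows essentially the same route as the paper's: the atomic bound $\|a_{\mathcal P}\|_{U^p}\le 1$ via the merging/concatenation of adjacent constant pieces (so each $\phi_{k-1}$ is counted at most once), linearity of $u\mapsto u_{\mathcal P}$ for the general case, and for (ii) the truncation of the atomic series together with the choice of $\mathcal P$ as the breakpoints of the truncation. Your extra remark justifying the interchange $u_{\mathcal P}=\sum_j\lambda_j(a_j)_{\mathcal P}$ via $U^p\hookrightarrow L^\infty$ is a small point the paper leaves implicit, but it does not change the argument.
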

	
\begin{proof}
(i)
We first claim that, given
 a $U^p$-atom $a$,
we have
 $\|a_{\mathcal P}\|_{U^p(\R)} \leq 1$
for any partition $\mathcal P$.
Given a $U^p$-atom
$ a = \sum_{k = 1}^K\phi_{k-1} \chi_{[t_{k-1}, t_k)}$
and a partition $\mathcal P = \{ \tau_1, \dots, \tau_n\}$ of $\R$, we have
\begin{align}
a_\mathcal{P}  = \sum_{j = 1}^n a(\tau_j) \chi_{[\tau_j, \tau_{j+1})},
\label{U3b}
\end{align}
	
\noi
where $\tau_{n+1} = \infty$.
Note that we have
\[ a(\tau_j) = \begin{cases}
\phi_{k-1}, &
\text{if }\tau_j \in [t_{k-1}, t_k) \text{ for some }k, \\
0,  & \text{otherwise.}
\end{cases}
\]

\noi
We can simplify the expression in \eqref{U3b}
by concatenating
neighboring intervals
$[\tau_j, \tau_{j+1})$ and $[\tau_{j+1}, \tau_{j+2})$
if $a(\tau_j) = a(\tau_{j+1})$
and obtain
\begin{align*}
a_\mathcal{P}  = \sum_{\l = 1}^L a(\tau_{j_\l} ) \chi_{[\tau_{j_\l}, \tau_{j_{\l+1}})}
\end{align*}

\noi
for some subpartition $\{ \tau_{j_\l}\}_{\l = 1}^L$ of $\mathcal{P}$,
where $a(\tau_{j_\l}) = \phi_{k-1}$ for some $k$ or $a(\tau_{j_\l}) = 0$.
Note that, given $k\in \{1, \dots, K\}$, there exists at most one $\l \in \{1, \dots, L\}$
such that $a(\tau_{j_\l}) = \phi_{k-1}$ (unless $\phi_{k-1} = \phi_{k'-1}$ for some $k' \ne k$).
In particular, we have
\[\ld:= \Big(\sum_{\l = 1}^L \|a(\tau_{j_\l})\|_H^p\Big)^\frac{1}{p}
\leq \Big(\sum_{k = 1}^K \|\phi_{k-1}\|_H^p\Big)^\frac{1}{p}
= 1.\]

\noi
If $\ld = 0$, then $a_{\mathcal P} = 0$.
Otherwise, we have $a_{\mathcal P} = \ld b$,
where $b$ is a $U^p$-atom given by
\[b = \sum_{\l = 1}^L \frac{a(\tau_{j_\l} )}{\ld} \chi_{[\tau_{j_\l}, \tau_{j_{\l+1}})}.\]

\noi
Hence, $\|a_{\mathcal P}\|_{U^p(\R)} \leq 1$.

Given $u \in U^p(\R)$, write
$u = \sum_{j = 1}^\infty\ld_j a_j$ for some $\{\ld_j\}_{j = 1}^\infty \in \l^1$
and some sequence of $\{a_j\}_{j = 1}^\infty$ of $U^p$-atoms.
Then, we have
\begin{align}
\| u_{\mathcal P}\|_{U^p(\R)}
= \bigg\|\sum_{j = 1}^\infty \ld_j \cdot (a_j)_{\mathcal P}\bigg\|_{U^p(\R)}
\leq \sum_{j = 1}^\infty |\ld_j| \|(a_j)_{\mathcal P}\|_{U^p(\R)}
\leq \sum_{j = 1}^\infty |\ld_j| .
\label{U3c}
\end{align}

\noi
Therefore, we obtain \eqref{U3a},
since \eqref{U3c}  holds for any
$\{\ld_j\}_{j = 1}^\infty \in \l^1$
and any sequence of $\{a_j\}_{j = 1}^\infty$ of $U^p$-atoms
such that
$u = \sum_{j = 1}^\infty \ld_j a_j$.

\smallskip

\noi
(ii)
Fix a representation
 $u = \sum_{j=1}^\infty \ld_j a_{j}$
for some $\{\ld_j\}_{j = 1}^\infty\in\ell^1$
and some  sequence $\{a_j\}_{j = 1}^\infty$ of $U^p$-atoms.
Then, by setting $u_J = \sum_{j=1}^J \ld_j a_{j}$ for sufficiently large $J$, we have
\begin{equation}
 \|u - u_J\|_{U^p(\R)} \leq \sum_{j = J+1}^\infty |\ld_j| < \frac{\eps}{2}.
\label{U3d}
 \end{equation}

\noi
Note that $u_J$ is a step function with finitely many jump discontinuities.
Now, we define a partition $\mathcal P$
by setting $\mathcal P = \{ t \in \R: \, u_J \text{ is discontinuous at } t\}$.
Then, by right-continuity of $u_J$, we have
$u_J - (u_J)_\mathcal{P} = 0$.
Hence, from \eqref{U3d} and part (i), we obtain
\begin{align*}
\| u - u_{\mathcal{P}}\|_{U^p(\R)}
& \leq
\| u - u_J\|_{U^p(\R)}
+\| u_J  -( u_J)_{\mathcal{P}}\|_{U^p(\R)}
+
\|( u - u_J)_{\mathcal{P}}\|_{U^p(\R)}\\
& <  \frac{\eps}{2} + 0 + \frac{\eps}{2} = \eps.
\end{align*}
	
\noi
Note that any refinement $\mathcal P'$ of the partition $\mathcal P$
also yields \eqref{U3aa}.
\end{proof}

\begin{lemma}\label{LEM:U4}
Let $I = [a, b)\subset \R$ be an interval.
Given $u \in U^p(I) \cap C(I; H)$,
the mapping $t \in I \mapsto \|u\|_{U^p([a, t))}$ is continuous.
\end{lemma}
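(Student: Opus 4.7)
The plan is to combine the density of step functions in $U^p(\R)$ (Lemma \ref{LEM:U3}(ii)) with the observation that the $U^p$-norm of a step function depends only on its ordered coefficient sequence, not on the positions of its breakpoints. Uniform $U^p$-approximation of $u$ by step functions will then let us compare $g(t) := \|u\|_{U^p([a,t))}$ to a function that is locally constant away from a finite set of partition points, forcing continuity.

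By Remark \ref{REM:U0}, $g(t) = \|\tilde u \cdot \chi_{[a,t)}\|_{U^p(\R)}$, where $\tilde u := u\cdot \chi_I \in U^p(\R)$; and by Lemma \ref{LEM:U0}, $g$ is non-decreasing. Since $g$ is monotone, it suffices to show that at each $t_0 \in I$ the oscillation of $g$ at $t_0$ is bounded by $2\eps$ for every $\eps > 0$. Fix $t_0$ and $\eps$. By Lemma \ref{LEM:U3}(ii), there is a finite partition $\mathcal{P}$ of $\R$ with $\|\tilde u - \tilde u_{\mathcal{P}}\|_{U^p(\R)} < \eps/2$. If $t_0 \in \mathcal{P}$, I would first refine $\mathcal{P}$ by adjoining a point $t_0 - \delta$ (which preserves the approximation bound by the remark at the end of the proof of Lemma \ref{LEM:U3}), then remove $t_0$ to obtain $\mathcal{P}'$ with $t_0 \notin \mathcal{P}'$. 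A direct computation shows that $\tilde u_{\mathcal{P}} - \tilde u_{\mathcal{P}'}$ equals the constant $u(t_0) - u(t_0 - \delta)$ on a single short interval, incurring a $U^p$-error of $\|u(t_0) - u(t_0 - \delta)\|_H$, which is at most $\eps/2$ for $\delta$ small by continuity of $u$; hence $\|\tilde u - \tilde u_{\mathcal{P}'}\|_{U^p(\R)} < \eps$.

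Next I would set $h(t) := \|\tilde u_{\mathcal{P}'} \cdot \chi_{[a,t)}\|_{U^p(\R)}$ and show $h$ is constant on each open interval between consecutive points of $\mathcal{P}'$. The key point is that the $U^p$-norm of a step function is invariant under repositioning of its breakpoints: given any atomic representation, transporting it along a piecewise-linear time bijection that matches the two sets of breakpoints produces an equally valid atomic representation of the reparameterized step function with the same $\ell^1$-weights (atoms map to atoms since the normalization $\sum \|\phi_{k-1}\|_H^p = 1$ is preserved). As $t$ varies between consecutive points of $\mathcal{P}'$, $\tilde u_{\mathcal{P}'} \cdot \chi_{[a,t)}$ changes only in the length of its terminal step, not in its coefficient sequence, so $h$ is constant there -- in particular, continuous at $t_0$. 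Finally, by Lemma \ref{LEM:U0} and the reverse triangle inequality,
\[
|g(t) - h(t)| \leq \|\tilde u \cdot \chi_{[a,t)} - \tilde u_{\mathcal{P}'} \cdot \chi_{[a,t)}\|_{U^p(\R)} \leq \|\tilde u - \tilde u_{\mathcal{P}'}\|_{U^p(\R)} < \eps
\]
uniformly in $t \in I$; combined with the local constancy of $h$ at $t_0$, this yields $\operatorname{osc}_{t_0} g \leq 2\eps$, and arbitrariness of $\eps$ completes the argument.

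The main obstacle I anticipate is formalizing the breakpoint-reparameterization invariance of the $U^p$-norm for step functions. The underlying idea is geometrically clear, but one must carefully transport arbitrary atomic decompositions (whose atoms' breakpoints need not coincide with those of the ambient step function) along piecewise-linear time bijections, and verify that the resulting objects are again $U^p$-atoms with the same $\ell^1$-weights.
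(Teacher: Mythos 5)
Your proof is correct, but it takes a genuinely different route from the paper's. The paper splits the statement into left- and right-continuity and argues differently for each: left-continuity is proved by contradiction, comparing the discretizations $\wt v_{\mathcal P}$ and $(\wt v_\dl)_{\mathcal P}$ of suitable extensions of $u\cdot\chi_{[a,t_*)}$ and $u\cdot\chi_{[a,t_*-\dl)}$ and using uniform continuity of $u$ to make $\|\wt v_{\mathcal P}-(\wt v_\dl)_{\mathcal P}\|_{U^p}$ small; right-continuity is proved by writing $\wt u-\wt v_0$ (which vanishes on $(-\infty,t_*]$) as an atomic series supported in $(t_*,\infty)$, truncating it, and absorbing the finite part into a single small atom at $t_*$. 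Your argument is unified and rests on a fact the paper never states or uses: invariance of the $U^p$-norm under increasing time reparameterizations, which makes $h(t)=\|\wt u_{\mathcal P'}\cdot\chi_{[a,t)}\|_{U^p}$ locally constant off $\mathcal P'\cup\{a\}$, after which Lemma \ref{LEM:U0} gives the uniform bound $|g(t)-h(t)|<\eps$ and hence the oscillation estimate. The invariance you flag as the main obstacle is genuine but routine, and your sketch is the right proof: if $\varphi$ is an increasing bijection of $\R$ and $a=\sum_{k}\phi_{k-1}\chi_{[t_{k-1},t_k)}$ is a $U^p$-atom, then $a\circ\varphi=\sum_k\phi_{k-1}\chi_{[\varphi^{-1}(t_{k-1}),\varphi^{-1}(t_k))}$ is again a $U^p$-atom with the same normalization, so every atomic representation of a function transports to one of its reparameterization with identical $\l^1$-weights. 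Two remarks: continuity of $u$ enters your argument only in moving the partition point off $t_0$ (costing exactly $\|u(t_0)-u(t_0-\dl)\|_H$), which is precisely where it must enter since the statement fails for step-function data; and monotonicity of $g$ is not actually needed, as the two-sided oscillation bound near $t_0$ already yields continuity. (Both your argument and the paper's implicitly treat only $t_0\in(a,b)$: at $t_0=a$ one has $g(a)=0$ while $g(a+)\geq \|u(a)\|_H$, so the endpoint case is degenerate in the statement itself rather than in your proof.)
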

	
\begin{remark}\rm
It follows from the proof that we need the (left-)continuity of $u$
only in proving left-continuity of the mapping
$t \in I \mapsto \|u\|_{U^p([a, t))}$.
\end{remark}

\begin{proof}

\noi
$\bullet$  {\bf Part 1:} Left-continuity.

Suppose that the mapping $t \mapsto \|u\|_{U^p([a, t))}$ is not left-continuous at $t =t_* \in (a, b)$.
Then,  there exist $\eps > 0$
and a sequence $\delta_n\in (0,t_\ast-a)$, $\delta_n\to 0$ as $n\to\infty$,
such that
\begin{equation}
 \|u \|_{U^p([a, t_* - \dl_n))} <  \|u \|_{U^p([a, t_* ))} - \eps.
\label{U4a0}
 \end{equation}

By definition, for any $\delta\in [0, t_\ast-a)$, there exists $v_\dl \in U^p (\R)$ such that
$v_\dl |_{[a, t_* - \dl)} = u$ and
\begin{equation}
 \|v_\dl\|_{U^p(\R) }\leq \|u \|_{U^p([a, t_* - \dl))} + \frac{\eps}{4}.
\label{U4a1}
 \end{equation}

\noi
Moreover, in view of Lemma \ref{LEM:U0},
we can assume that $v_\dl = v_\dl \cdot\chi_{[a, t_*-\dl)}$.
In particular, we have
\begin{align}
v_\dl  = \sum_{j = 1}^\infty \ld_j^\dl  a_j^\dl
= \sum_{j = 1}^\infty \ld_j^\dl    \sum_{k = 1}^{K_j^\dl }
\phi_{k-1}^{\dl, j} \chi_{[t^{\dl, j}_{k-1}, t^{\dl, j}_k)} ,
\label{U4a}
\end{align}

\noi
where $t_{K_j^\dl}^{\dl, j} \leq t_*-\dl$.
Now, we define an extension $\wt v_\dl$ of
$v_\dl $ onto $\R$
by setting  $t_{K_j^\dl}^{\dl, j} = \infty$ in \eqref{U4a} if $t_{K_j^\dl}^{\dl, j} = t_* - \dl$.
By continuity of $u$ and $v_\dl  |_{[a, t_* - \dl)} = u$,
we have $\wt v_\dl (t) = u(t_*-\dl)$ for $t  \in [ t_*-\dl, \infty)$.
By construction, we have
\begin{equation}
\|v_\dl\|_{U^p(\R)} = \|\wt v_\dl\|_{U^p(\R)}.
\label{U4a2}
\end{equation}

Let $\wt v$ be the extension of $u \cdot \chi_{[a, t_*)}$
constructed as above with $\dl = 0$.
Then, by definition of the $U^p$-norm and Lemma \ref{LEM:U3} (ii),
there exists a partition $\mathcal P$ of $\R$ such that
\begin{align}
\| u \|_{U^p([a, t_*))} \leq \|\wt v\|_{U^p(\R)}
\leq \|\wt v_{\mathcal P}\|_{U^p(\R)} + \frac{\eps}{8}.
\label{U4b}
\end{align}

\noi
Since \eqref{U4b} holds for any refinement
$\mathcal P'$ of $\mathcal P$, we can assume that $t_* \in \mathcal P$.

By uniform continuity of $u$, there exists $\dl_0 > 0$ such that 	
\begin{align}
\| u(t_1) - u(t_2) \|_H < \frac{\eps}{8 \cdot (\#\mathcal P+1)}
\label{U4c}
\end{align}

\noi
for any $t_1, t_2 \in (t_* - \dl_0, t_*]$.	
Since $\wt v_\dl = \wt v$ on $(-\infty, t_*-\dl]$,
$\wt v(t) = u(t_*)$ for $t \geq t_*$,
and $\wt v_\delta(t)=u(t_\ast-\delta)$ for $t\geq t_\ast-\delta$,
we have
\begin{align*}
 \wt v_{\mathcal P} - (\wt v_\dl)_{\mathcal P}
& = \sum_{\substack{\tau_j \in \mathcal P\\
\tau_j > t_* - \dl}} \big( u (\tau_j) - u (t_* - \dl)\big)\chi_{[\tau_j, \tau_{j+1})}\\
& = \sum_{\substack{\tau_j \in \mathcal P\\
\tau_j \in ( t_* - \dl, t_*)}} \big( u (\tau_j) - u (t_* - \dl)\big)\chi_{[\tau_j, \tau_{j+1})}
+ \big(u(t_*) - u(t_* - \dl) \big)\chi_{[t_*, \infty)}.
\end{align*}

\noi
Then, from \eqref{U4c}, we have
\begin{align}
\| \wt v_{\mathcal P} - (\wt v_\dl)_{\mathcal P}\|_{U^p(\R)}
& \leq  \sum_{\substack{\tau_j \in \mathcal P\\
\tau_j \in ( t_* - \dl, t_*)}} \| u (\tau_j) - u (t_* - \dl)\|_{H}
+ \| u(t_*) - u(t_* - \dl)\|_H \notag \\
& < \frac{\eps}{8} \label{U4d}
\end{align}

\noi
 for any $\dl \in (0, \dl_0)$.

Finally, from \eqref{U4b}, \eqref{U4d}, Lemma \ref{LEM:U3} (i),
\eqref{U4a2}, \eqref{U4a1}, and \eqref{U4a0},
we have
\begin{align*}
\| u \|_{U^p([a, t_*))}
& \leq \|(\wt v_{\dl_n})_{\mathcal P}\|_{U^p(\R)} + \frac{\eps}{4}
\leq \|\wt v_{\dl_n}\|_{U^p(\R)} + \frac{\eps}{4}
 \leq \|u \|_{U^p([a, t_* - \dl_n))} + \frac{\eps}{2}\\
& \leq \|u \|_{U^p([a, t_*))} - \frac{\eps}{2}
\end{align*}

\noi
 for  sufficiently large $n$ such that $\delta_n<\delta_0$.
This is a contradiction.
Therefore,
the mapping $t \mapsto \|u\|_{U^p([a, t))}$ is  left-continuous at $t =t_*$.

\smallskip

\noi
$\bullet$ {\bf Part 2:} Right-continuity.

Fix $t_* \in I$ and small $\eps > 0$.
As in Part 1,
let $\wt v_0$ be the extension
of $v_0 = v_0 \cdot \chi_{[a, t_*)}$ satisfying \eqref{U4a1}.
In particular, from \eqref{U4a1} and \eqref{U4a2}, we have
\begin{align}
\|\wt v_0\|_{U^p(\R)} \leq \|u \|_{U^p([a, t_*))} + \frac{\eps}{4}.
\label{U5b}
\end{align}

\noi
Note that $\wt v_0 = 0$ on $(-\infty, a)$.

Let $w = \wt u - \wt v_0$, where $\wt u \in U^p(\R) $ is an extension of $u$ from $I$ onto $\R$
such that $\wt u = 0$ on $(-\infty, a)$.
Since $w \in U^p(\R)$, we can write $w = \sum_{j = 1}^\infty \ld_j a_j$
for some $\{\ld_j\}_{j = 1}^\infty \in \l^1$
and some sequence  $\{a_j\}_{j = 1}^\infty$ of $U^p$-atoms.
Since $w = \wt u - \wt v_0 = 0$ on $(-\infty, t_*]$,
we can assume
 that $\supp (a_j) \subset (t_*,  \infty)$ for all $j$.
Then, we can choose large $J = J(\eps) \in \mathbb{N}$ such that
\begin{align}
\sum_{j = J+1}^\infty |\ld_j| < \frac{\eps}{4}.
\label{U5c}
\end{align}
	
\noi	
Noting that
$w_J := \sum_{j = 1}^J \ld_j a_j$ is a finite linear combination of
characteristic functions, there exists $\dl_0>0$ such that
$w_J$ is constant on $[t_*, t_*+\dl_0) \subset I$.
Define $\ld_0$, $\phi$, and $a_0$ by
\[\ld_0 := \|w_J(t_*)\|_H, \quad
\phi := \ld_0^{-1} w_J(t_*), \quad
\text{and} \quad a_0  :=  \chi_{[t_*, \infty)} \phi .\]

\noi
Then, define $\wt w$ by
$\wt w(t)  :=w_{J}(t_\ast)\chi_{[t_\ast,\infty)}+ \sum_{j = J+1}^\infty \ld_j a_j= \ld_0 a_0 + \sum_{j = J+1}^\infty \ld_j a_j$.
Note that $\wt w = 0$ on $(-\infty, t_*]$.
it follows from \eqref{U5c} that
\begin{align}
\| \wt w\|_{U^p(\R)} \leq
|\ld_0| + \sum_{j = J+1}^\infty |\ld_j|
< \frac{ \eps}{2},
\label{U5d}
\end{align}

\noi
since we have
\[ \ld_0 = \bigg\| w(t_*) - \sum_{j = J+1}^\infty \ld_j a_j(t_*)\bigg\|_H
=\bigg\|\sum_{j = J+1}^\infty \ld_j a_j(t_*)\bigg\|_H
\leq \sum_{j = J+1}^\infty |\ld_j| < \frac{\eps}{4}.
\]

\noi
Here, we used the fact that   $\|a_j\|_{H}\leq 1$ for a $U^p$-atom $a$.
By construction, we have $\supp (\wt w) \subset [t_*, \infty)$.
Then, noting that
$u - \wt v_0 = \wt w$
on $(-\infty, t_*+\dl_0) \subset I$,
it follows from \eqref{U5b} and  \eqref{U5d} that
\begin{align*}
\| u \|_{U^p([a, t_*+\dl))}
& \leq
 \|  \wt v_0\|_{U^p([a, t_*+\dl))}
+ \|\wt w \|_{U^p([a, t_*+\dl))}
  \leq \|  \wt v_0\|_{U^p(\R)} + \| \wt w \|_{U^p(\R)}
  \notag \\
& \leq \| u \|_{U^p([a, t_*))}
+\frac 34 \eps
\end{align*}
	
\noi
for any $\dl \in (0, \dl_0]$.
Therefore,
the mapping $t \mapsto \|u\|_{U^p([a, t))}$ is  right-continuous at $t =t_*$.
\end{proof}

\begin{lemma}\label{LEM:U6}
Let $s \in \R$ and  $I = [a, b) \subset \R$.
Given $u \in X^s(I) \cap C(I; H^s(\R^d))$,
the mapping $t \in I \mapsto \|u\|_{X^s([a, t))}$ is continuous.
\end{lemma}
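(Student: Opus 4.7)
The plan is to reduce the statement to the already-established continuity of the scalar $U^p$-norm (Lemma \ref{LEM:U4}) applied to each Littlewood--Paley piece, and then pass continuity through the square-summation in $N$ by a Weierstrass M-test.

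First, fix a dyadic $N \geq 1$. Since $u \in C(I; H^s(\R^d))$, we have $\P_N u \in C(I; H^s(\R^d)) \subset C(I; L^2(\R^d))$, and by the strong continuity of the unitary group $S(-t)$ on $L^2(\R^d)$ the map $t \mapsto S(-t)\P_N u(t)$ lies in $C(I; L^2(\R^d))$. In addition, since $u \in X^s(I)$, we have $S(-t)\P_N u \in U^2(\R; L^2(\R^d))$ after choosing an appropriate extension. Lemma \ref{LEM:U4} (applied with $H = L^2(\R^d)$, $p = 2$, to $S(-t)\P_N u$) then yields continuity of
\[ t \in I \;\longmapsto\; f_N(t) := N^{2s}\, \|\P_N u\|_{U^2_\Delta L^2([a,t))}^2. \]

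Next, I would record the monotonicity bound $\|\P_N u\|_{U^2_\Delta L^2([a,t))} \leq \|\P_N u\|_{U^2_\Delta L^2(I)}$ for all $t \in I$. This follows directly from the definition of the restriction norm: any admissible extension of $\P_N u$ from $I$ to $\R$ is automatically an admissible extension from the subinterval $[a,t)$ to $\R$, so the infimum over the larger class of extensions is no bigger. (One could alternatively invoke Lemma \ref{LEM:U1} with the partition $I = [a,t) \cup [t, b)$, combined with Lemma \ref{LEM:U0}.) Consequently $f_N(t) \leq M_N := N^{2s}\|\P_N u\|_{U^2_\Delta L^2(I)}^2$, and $\sum_N M_N = \|u\|_{X^s(I)}^2 < \infty$ by hypothesis.

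With each $f_N$ continuous on $I$ and the uniform majorant $\{M_N\}$ summable, the Weierstrass M-test shows the series $F(t) := \sum_{N\, \text{dyadic}} f_N(t)$ converges uniformly on $I$, hence is continuous. Since $\|u\|_{X^s([a,t))} = F(t)^{1/2}$ and $F \geq 0$, composition with $\sqrt{\cdot}$ preserves continuity, completing the proof.

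There is no substantial obstacle: the only subtle point is the monotonicity $\|\cdot\|_{U^p([a,t))} \leq \|\cdot\|_{U^p(I)}$, which is immediate from the restriction-norm definition but worth stating explicitly so that the M-test applies.
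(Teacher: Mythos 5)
Your argument is correct and is essentially the paper's own proof: continuity of each Littlewood--Paley block via Lemma \ref{LEM:U4} applied to $S(-t)\P_N u$, plus uniform smallness of the dyadic tail (your M-test is just a cleaner packaging of the paper's truncation-at-$J$ plus Minkowski step). The one step you assert without proof, namely $\|u\|_{X^s([a,t))}^2=\sum_N N^{2s}\|\P_N u\|^2_{U^2_\Delta([a,t);L^2)}$, is not immediate from the definition of the restriction norm (a priori the infimum over extensions need not be attained blockwise-simultaneously); it is exactly what the paper establishes in \eqref{UP0}--\eqref{UP3} using Lemma \ref{LEM:U0}, so that identity should be cited or proved rather than taken for granted.
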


\begin{proof}
First, we claim that the infimum in the definition of
the local-in-time $X^s$-norm on an interval $[a, t)$ is achieved by $u \cdot \chi_{[a, t)}$
for any $ t \leq b$. Namely, we have
\begin{align}
\|u \|_{X^s([a, t))} = \|u\cdot \chi_{[a, t)}\|_{X^s(\R)}.
\label{UP0}
\end{align}

\noi	
On the one hand, given any extension $v$ on $\R$ of $u$ restricted to $[a, t)$,
i.e.~ $v|_{[a, t)} = u$, we have
\begin{align}
\|u \|_{X^s([a, t))}
\leq \| v\|_{X^s(\R)}.
\label{UP1}
\end{align}

\noi
On the other hand, by Lemma \ref{LEM:U0}, we have
\begin{align}
\|u\cdot \chi_{[a, t)} \|_{X^s(\R)}
= \|v\cdot \chi_{[a, t)} \|_{X^s(\R)}
\leq \| v\|_{X^s(\R)}.
\label{UP2}
\end{align}

\noi
Hence, \eqref{UP0} follows, since  \eqref{UP1} and  \eqref{UP2} hold
for any extension $v$.
Moreover, we have
\begin{align}
\|u \|_{X^s([a, t))}
& = \|u\cdot \chi_{[a, t)}\|_{X^s(\R)}
= \bigg(\sum_{\substack{N\geq 1\\ \text{dyadic}}}
N^{2s} \|\P_N u \cdot \chi_{[a, t)} \|^2_{U^2_\Dl(\R; L^2)}\bigg)^\frac{1}{2} \notag \\
& = \bigg(\sum_{\substack{N\geq 1\\ \text{dyadic}}}
N^{2s} \|\P_N u \|^2_{U^2_\Dl([a, t); L^2)}\bigg)^\frac{1}{2},
\label{UP3}
\end{align}

\noi
where the last equality follows from Remark \ref{REM:U0}.

Let $v$ be an extension of $u$ onto $\R$
such that $ \| v\|_{X^s(\R)} < \infty.$
Given $\eps > 0$, we can choose $J \in \mathbb{N}$ such that
\begin{align*}
 \bigg(\sum_{j = J}^\infty
2^{2js} \|\P_{2^j} v \|^2_{U^2_\Dl(\R; L^2)}\bigg)^\frac{1}{2}<\frac{\eps}{4}.
\end{align*}

\noi
Then, we have
\begin{align}
 \bigg(\sum_{j = J}^\infty
2^{2js} \|\P_{2^j} u \|^2_{U^2_\Dl([a, t); L^2)}\bigg)^\frac{1}{2}<\frac{\eps}{4}
\label{UP4}
\end{align}
	
\noi
for any $t \in I$.
Fix $t_* \in I$.
By Lemma \ref{LEM:U4},
for each $j = 0, 1, \dots, J-1$,
there exists $\dl_j >0$
such that
\begin{align}
2^{js}\Big|
\|\P_{2^j} u \|_{U^2_\Dl([a, t_*); L^2)}
- \|\P_{2^j} u \|_{U^2_\Dl([a, t_*+\dl); L^2)}
 \Big|
< \frac \eps {2 J^\frac{1}{2}}
\label{UP5}
\end{align}
	
\noi
for  $|\dl| < \dl_j$.
Then, by Minkowski's inequality with \eqref{UP3},  \eqref{UP4},  and \eqref{UP5}, we have
\begin{align*}
\Big|\|  u \|_{X^s([a, t_*))} & - \|   u  \|_{X^s([a, t_*+\dl ))} \Big|
\leq  \bigg(\sum_{j = 0}^\infty
2^{2js} \big(\|\P_{2^j} u \|_{U^2_\Dl([a, t_*); L^2)}
- \|\P_{2^j} u \|_{U^2_\Dl([a, t_*+\dl); L^2)}\big)^2 \bigg)^\frac{1}{2}\\
& \leq
J^\frac{1}{2} \max_{j = 0, \dots, J-1}
2^{js}\Big|
\|\P_{2^j} u \|_{U^2_\Dl([a, t_*); L^2)}
- \|\P_{2^j} u \|_{U^2_\Dl([a, t_*+\dl); L^2)}
 \Big| +\frac{\eps}{2}\\
& < \eps
\end{align*}

\noi
for  $0<|\dl| < \min (\dl_0, \dots, \dl_{J-1})$.
This proves the lemma.
\end{proof}

\begin{acknowledgment}

\rm 
This work was partially supported by a grant from the Simons Foundation (No.~246024 to \'Arp\'ad B\'enyi).
T.O.~was supported by the European Research Council (grant no.~637995 ``ProbDynDispEq'').
O.P.~was supported by the NSF grant under agreement No.~DMS-1128155.
Any opinions, findings, and conclusions or recommendations expressed in this material
are those of the authors and do not necessarily reflect the views of the NSF

\end{acknowledgment}

\end{document}